\newtheorem{thmA}{Theorem}  
\newcommand{\R}{{\mathbb R}}
\newcommand{\Q}{{\mathbb Q}}
\newcommand{\C}{{\mathbb C}}
\newcommand{\Z}{{\mathbb Z}}
\def\F {F^{(k)}}
\theoremstyle{plain}
\numberwithin{equation}{section}
\newtheorem{thm}{Theorem}[section]
\newtheorem{lemma}[thm]{Lemma}
\newtheorem{proposition}[thm]{Proposition}
\begin{document}

\title[Diophantine equations over the generalized Fibonacci sequences]{Diophantine equations over the generalized Fibonacci sequences: exploring sums of powers}

\author[R. Alvarenga]{Roberto Alvarenga}
\address{São Paulo State University (UNESP)\\
               São José do Rio Preto-SP\\
               15054-000, Brazil}
\email{roberto.alvarenga@unesp.br}

\author[A.P. Chaves]{Ana Paula Chaves}
\address{Instituto de Matemática e Estatística\\
               Universidade Federal de Goiás\\
               Goiânia, GO\\
               74001-970, Brazil}
\email{apchaves@ufg.br}

\author[M. E. Ramos]{Maria Eduarda Ramos}
\address{Departamento de Matemática\\
               Universidade Federal de Minas Gerais\\
               Belo Horizonte, MG\\
               31270-901, Brazil}
\email{madu-ramos@ufmg.br}

\author[M. Silva]{Matheus Silva}
\address{Instituto De Ciências Matemáticas e de Computação\\
               Universidade de São Paulo\\
               São Carlos, SP\\
               13566-590, Brazil}
\email{matheussilva1@usp.br}

\author[M. Sosa]{Marcos Sosa}
\address{Universidade Federal da Integração Latino-Americana\\
                Foz do Iguaçu, PR\\
               85870-650, Brazil}
\email{mes.garcete.2020@aluno.unila.edu.br}
\thanks{}

\begin{abstract}
Let \((F_n)_{n}\) be the classical Fibonacci sequence. It is well-known that it satisfies \(F_{n}^2 + F_{n+1}^2 = F_{2n+1}\). In this study, we explore generalizations of this Diophantine equation in several directions. First, we solve the Diophantine equation 
$\big(F_{n}^{(k)}\big)^2 + \big(F_{n+d}^{(k)}\big)^2 = F_{m}^{(k)}
$
over the \(k\)-generalized Fibonacci numbers for every \(k \geq 2\), generalizing \cite{chaves-marques-14}. Next, we solve 
$
F_{n}^{s} + F_{n+d}^{s} = F_m
$
over the Fibonacci numbers for every \(s \geq 2\), generalizing \cite{luca-oyono-11}. Finally, we solve the Diophantine equation 
$
F_{n}^s + \cdots + F_{n+d}^s = F_m
$
for \(d+1 < n\) and \(s \geq 2\).
   
\end{abstract}

\setcounter{tocdepth}{1}
\maketitle
\tableofcontents


\section{Introduction}


Let \((F_n)_{n}\) be the Fibonacci sequence defined by \(F_{n+2} = F_{n+1} + F_n\) for \(n \geq 0\), with initial conditions \(F_0 = 0\) and \(F_1 = 1\). Fibonacci numbers are renowned for their fascinating properties and remarkable connections to various fields, including natural structures, architecture, engineering, technology, computing, and more. For an overview of the history of the Fibonacci sequence, properties, and intriguing applications and generalizations, see \cite{vorobiev-02}. Let $k \in \Z$, $k \geq 2$. Among several generalizations of the Fibonacci numbers, we consider the \emph{\(k\)-generalized Fibonacci sequence}  \((\F_n)_{n \geq -(k-2)}\) (also known as the \emph{\(k\)-step Fibonacci sequence} or \emph{\(k\)-bonacci sequence}), defined for \(n \geq 0\) by
\[
\F_{n+k} = \F_{n+k-1} + \F_{n+k-2} + \cdots + \F_{n},
\]  
and with initial values \(\F_{-k+2} := \F_{-k+1} := \cdots := \F_0 := 0\) and \(\F_1 := 1\).  We observe that for \(k = 2\) in the above definition, we recover the well-known Fibonacci numbers, i.e., \(F_n^{(2)} = F_n\). 
These \(k\)-generalized Fibonacci sequences have been extensively investigated in the literature over the last three decades. According to \cite{kessler-04}, these numbers appear in probability theory and certain sorting algorithms. In \cite{altassan-alan-23}, the authors introduce the \(k\)-generalized \emph{tiny golden angles} using the roots of the characteristic polynomial of the \(k\)-generalized Fibonacci sequence. The authors assert that these angles could enhance the accuracy of MRI scans by optimizing the distribution of sampling points, thereby resulting in clearer images.


On the other hand, Diophantine equations is a major topic in mathematics. It addresses solutions of polynomial equations over non-algebraically closed fields or, more generally, over (a subset of) any ring. Problems in this area of research are often simple to state but require techniques from several fields, such as commutative algebra, algebraic geometry, and algebraic and analytic number theory, to achieve resolution. A notable example is Fermat's Last Theorem, proved by Andrew Wiles in \cite{wiles-95}, which asserts that the Diophantine equation \(X^n + Y^n = Z^n\) admits no nontrivial integer solutions for \(n > 2\).


Diophantine equations involving Fibonacci numbers and their generalizations have been the focus of numerous research articles for at least the past two decades. For instance, we refer to \cite{bravo-luca-13}, \cite{chaves-marques-15}, \cite{ddamulira-gomez-luca-18}, and \cite{gomez-gomez-luca-24} for some recent works in this direction. In the seminal work \cite{bugeaud-mignotte-siksek-06}, Bugeaud, Mignotte, and Siksek utilized tools from the proof of Fermat's Last Theorem and linear forms in logarithms to solve the Diophantine equation $F_n=x^y$, where $x,y \in \Z$ with $x,y > 0$. Thereby, identifying all perfect powers within the Fibonacci sequence.

In this article, we investigate Diophantine equations over the $k$-generalized Fibonacci numbers. 
Our starting point is the following well-known identity, which is satisfied by the Fibonacci numbers
\begin{equation}\tag{$0$} \label{eq-classic}
    F_{n}^2 + F_{n+1}^2 = F_{2n+1}.
\end{equation}
We propose the following generalizations of this identity.


The first natural question that arises is:  what is the behavior of the sum of squares of two $k$-generalized Fibonacci numbers, not necessarily consecutive or distinct? Namely, we aim to investigate the solutions of 
\begin{equation}\tag{I} \label{eq-firstgoal} 
    \big(\F_{n} \big)^2 + \big( \F_{n+d} \big)^2 = \F_{m}
\end{equation}
for every $k,d \in \Z$ with $k \geq 2$ and $d \geq 0$. We observe that when $k=2$ and $d=1$ we return to identity \eqref{eq-classic}. 
Geometrically, solving \eqref{eq-firstgoal} is equivalent to searching for the \textit{Fibonacci rational points} (i.e., integer solutions that are also Fibonacci numbers) of the following Diophantine equation
\[ X^2 + Y^2 = Z.\]
Our first main theorem, which is Theorem \ref{propfibquad}, solves \eqref{eq-firstgoal} for $k=2$:  

\begin{thmA}  The solutions of the Diophantine equation \eqref{eq-firstgoal}, for $k=2$,
    are 
    \[ (n,d,m) \in \big\{(1,2,5), (1,0,3), (2,0,3), (3,0,6), (n,1,2n+1) \big\},\]
where $d,n \in \Z$, with $n \geq 1$ and $d \geq 0$. 
\end{thmA}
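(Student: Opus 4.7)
The plan is to split the analysis by the parity of $d$ (treating $d = 0$ separately), and in each case to use the standard size bounds $\alpha^{k-2} \le F_k \le \alpha^{k-1}$ to pin $m$ down to a short list of candidates, then rule each out (or confirm it) via explicit Fibonacci identities. Since every reduction will terminate either at an equation of the form $F_a = F_b$ (whose only solutions have $a = b$ or $\{a,b\} = \{1,2\}$) or at a visibly impossible identity, no transcendence machinery such as linear forms in logarithms should be needed.

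The case $d = 1$ is immediate from \eqref{eq-classic}. For $d = 0$ the equation reads $2F_n^2 = F_m$, and the $\alpha$-bounds force $m \in \{2n-1,\,2n,\,2n+1\}$. For $m = 2n \pm 1$ I would apply the classical identity $F_{2k+1} = F_{k+1}^2 + F_k^2$ (with $k = n$ or $k = n-1$) to reduce the equation to $F_n = F_{n+1}$ or $F_n = F_{n-1}$, forcing $n = 1$ or $n = 2$ respectively; for $m = 2n$ I would use $F_{2n} = F_n L_n$ to reduce to $L_n = 2F_n$, equivalently $F_{n-1} = F_{n-2}$, forcing $n = 3$. This produces exactly the three $d = 0$ solutions in the list.

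For $d \ge 2$ I start from the Binet-formula identity
\[ 5\bigl(F_n^2 + F_{n+d}^2\bigr) \;=\; L_{2n} + L_{2n+2d} \;-\; 2(-1)^n\bigl(1+(-1)^d\bigr), \]
together with the factorizations $L_{2n}+L_{2n+2d} = L_d\,L_{2n+d}$ when $d$ is even and $L_{2n}+L_{2n+2d} = 5\,F_d\,F_{2n+d}$ when $d$ is odd, both immediate from Binet. In the odd case the equation collapses to $F_m = F_d F_{2n+d}$; the $\alpha$-bounds trap $m \in \{2n+2d-3, \ldots, 2n+2d\}$, and expanding $F_m$ via $F_{a+b} = F_a F_{b+1} + F_{a-1} F_b$ centred at $a = 2n+d$ shows that $F_m - F_d F_{2n+d}$ is a strictly signed quantity in every case (the leftover being a positive multiple of one of $F_{d-1}$, $F_{d-2}$, $F_{d-3}$), ruling out every candidate as soon as $d \ge 3$. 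In the even case the $\alpha$-bounds give $m \in \{2n+2d-3, \ldots, 2n+2d+1\}$; applying $F_{2k \pm 1} = F_k^2 + F_{k\mp 1}^2$ or $F_{2k} = F_k L_k$ to each candidate reduces the equation either to a visibly impossible relation such as $F_n^2 = -F_{n+d-2}^2$ (for $m = 2n+2d-2$) or to $F_n = F_{n+d-1}$ (for $m = 2n+2d-1$). Only the latter survives, and it forces $\{n, n+d-1\} = \{1,2\}$, producing $(n,d,m) = (1,2,5)$.

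The main obstacle I anticipate is purely bookkeeping: every one of the four or five candidate values of $m$ in each parity case has to be eliminated carefully, with particular attention to the degenerate small-$d$ cases where a Fibonacci factor in the leftover term vanishes (for instance $F_{d-3} = 0$ at $d = 3$, which must be handled separately from the generic sign argument) and to the small-$n$ cases where the equality $F_1 = F_2$ is what produces the sporadic solution $(1,2,5)$. Once these boundary situations are treated by hand, the proof remains entirely elementary and uses only the standard Fibonacci--Lucas identities listed above.
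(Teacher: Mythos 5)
Your proposal is correct, and it reaches the same list of solutions by a genuinely different organization of the argument. Where you differ from the paper: for $d=0$ the paper invokes Carmichael's primitive divisor theorem to force $m\le 12$ and then checks finitely many cases, whereas you trap $m\in\{2n-1,2n,2n+1\}$ with the crude $\alpha$-power bounds and reduce each candidate to an equality of Fibonacci numbers via $F_{2k\pm1}=F_k^2+F_{k\mp1}^2$ and $F_{2k}=F_kL_k$ --- more elementary and self-contained. For $d\ge 2$ the paper splits on the size of $n$: the cases $n=1,2$ require a sharpened growth estimate $\alpha^{n-7/4}<F_n<\alpha^{n-3/2}$ (its Lemma 3.1) to squeeze $1+F_{d+1}^2$ strictly between $F_{2d}$ and $F_{2d+1}$, while the case $n>2$ is a short sandwich $F_{2n+2d-2}<F_n^2+F_{n+d}^2<F_{2n+2d-1}$ built from $F_{2k+1}=F_k^2+F_{k+1}^2$. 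You instead split on the parity of $d$ and exploit the closed forms $F_n^2+F_{n+d}^2=F_dF_{2n+d}$ ($d$ odd) and $5(F_n^2+F_{n+d}^2)=L_dL_{2n+d}-4(-1)^n$ ($d$ even); your signed-leftover computation for the odd case (e.g.\ $F_{2n+2d-1}-F_dF_{2n+d}=F_{d-1}F_{2n+d-1}>0$ and $F_{2n+2d-2}-F_dF_{2n+d}=-F_{d-2}F_{2n+d-2}<0$) recovers the same consecutive-Fibonacci sandwich uniformly in $n$, so you never need the refined inequality or a separate small-$n$ analysis. The trade-off is that your route demands more candidate-by-candidate bookkeeping (four or five values of $m$ per parity class, plus the degenerate indices you correctly flag, such as $F_{d-3}=0$ at $d=3$ and the coincidence $F_1=F_2$ that produces $(1,2,5)$), while the paper's sandwich for $n>2$ is essentially a two-line computation once the identity is written down. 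Both arguments are fully elementary; neither needs linear forms in logarithms.
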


For $k \geq 3$, in  \cite{chaves-marques-14}, Chaves and Marques provided a partial solution to \eqref{eq-firstgoal}. Namely, they considered the sum of squares of two consecutive $k$-generalized Fibonacci numbers, i.e., $(\F_n)^2 + (\F_{n+1})^2$, and showed that it does not belong to the same sequence $(\F_m)_m$, if $n >1$. The following is Theorem \ref{thm-main1}, which leads to a complete solution for \eqref{eq-firstgoal} and generalizes \cite[Thm. 1.1]{chaves-marques-14}. 
 
\begin{thmA}  Let $n,k,d \in \Z$ with $n\geq 1$, $k \geq 3$ and $d \geq 0$. The Diophantine equation \eqref{eq-firstgoal} 
has a solution if, and only if, either 
\[ (n,d,m) = (1,0,3)  \quad \text{ or } \quad 
(n,d,m) = (1,1,3) \quad \text{ or } \quad 
(n,d,m) = (a,0,2a-1), \] 
where $2 \leq a \leq \lfloor \frac{k+2}{2}\rfloor$. 
\end{thmA}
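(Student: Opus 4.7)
The plan is to combine the closed-form identity $\F_n = 2^{n-2}$ valid for $2 \leq n \leq k+1$ (with $\F_1 = 1$) with the Binet-like representation $\F_n = g(\alpha)\alpha^{n-1} + E_n$, where $\alpha \in (2(1-2^{-k}),\,2)$ is the dominant root of the characteristic polynomial $x^k - x^{k-1} - \cdots - 1$ and $|E_n| < 1/2$. The analysis splits naturally according to whether $n+d \leq k+1$ (so both summands on the left of \eqref{eq-firstgoal} are powers of $2$) or $n+d \geq k+2$ (which demands analytic estimates).

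In the small regime $n+d \leq k+1$, I would substitute the power-of-two formula directly into \eqref{eq-firstgoal}. The case $d = 0$ with $n \geq 2$ gives $2^{2n-3} = \F_m$, forcing $m = 2n-1$ and the constraint $m \leq k+1$, which yields precisely the family $(a, 0, 2a-1)$ with $2 \leq a \leq \lfloor(k+2)/2\rfloor$. The boundary case $n=1$ is treated separately using $\F_1 = \F_2 = 1$ and recovers $(1,0,3)$ and $(1,1,3)$. When $d \geq 1$ and $n \geq 2$, the left-hand side equals $2^{2n-4}(1 + 2^{2d})$, whose odd factor exceeds $1$, so $\F_m$ cannot be a power of $2$; an inspection of the explicit values $\F_{k+j}$ for small $j$, together with the bounds $\alpha^{m-2} \leq \F_m \leq \alpha^{m-1}$, rules out any new solution.

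The substantive work lies in the regime $n+d \geq k+2$. Substituting the Binet approximation on both sides of \eqref{eq-firstgoal} and dividing through by $g(\alpha)\alpha^{m-1}$, I would isolate a linear form
\[
\Lambda := g(\alpha)\,\alpha^{2(n+d-1)-(m-1)}(1 + \alpha^{-2d}) - 1,
\]
whose absolute value is bounded above by a negative power of $\alpha$ coming from the accumulated errors $E_j$. Applying Matveev's theorem to $\log|\Lambda|$, viewed as a linear form in three logarithms over $\Q(\alpha)$ (using the uniform bounds $1/2 < g(\alpha) < 1$ and the height estimate $h(g(\alpha)) = O(k \log k)$), yields a lower bound of the shape $|\Lambda| > \exp(-C k^{4} \log(n+d) \log m)$. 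Comparing upper and lower bounds produces a polynomial inequality in $n, d, m, k$, which a Baker–Davenport-style reduction on a refined linear form then collapses to a finite, effectively computable set of candidates; each is checked directly and recovers only the already listed solutions.

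The main obstacle is obtaining these comparisons uniformly in $k$. Since $[\Q(\alpha):\Q] = k$, the Matveev constants depend on $k$, and both the Binet coefficient $g(\alpha)$ and the minimal polynomial of $\alpha$ vary with $k$ and must be controlled by explicit estimates. To circumvent this, I would separate small $k$ (handled by direct computer search) from the asymptotic regime $k \to \infty$, for which the uniform estimates $\alpha \to 2$, $g(\alpha) \in (1/2, 3/4)$, and the height bound above allow one to extract a workable inequality and close the argument.
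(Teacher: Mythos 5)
Your plan for the regime $n+d \geq k+2$ has a genuine gap, and it is precisely the gap you acknowledge without resolving: uniformity in $k$. Matveev's bound over $K = \Q(\alpha)$ carries the factor $C_{3,k} \sim 30^6 k^2(1+\log k)$ together with heights $A_i$ that, after multiplication by the degree $k$, are bounded below by absolute constants (and $A_3$ for $\gamma_3 = 1+\alpha^{-2d}$ grows with $d$). The resulting bounds on $n$, $d$, $m$ therefore grow polynomially in $k$, so your ``finite, effectively computable set of candidates'' is in fact an infinite union over all $k \geq 3$; a Baker--Davenport reduction must be rerun for each $k$ and cannot terminate the proof. The closing sentence --- treat small $k$ by computer and let the ``uniform estimates $\alpha \to 2$, $g(\alpha) \in (1/2,3/4)$'' handle $k \to \infty$ --- is not an argument: the hard range is exactly $k+1 < n+d \lesssim k^4\log k$ with $k$ unbounded, and closing it in the literature requires an additional quantitative estimate of the shape $|\F_n - 2^{n-2}| \ll 2^{n-2}/2^{k/2}$ for $n \ll 2^{k/2}$ followed by a separate elementary analysis, none of which you sketch. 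A smaller but real issue in your ``small'' regime: for $d=0$ you assert $2^{2n-3} = \F_m$ forces $m = 2n-1 \leq k+1$, which presupposes that no $\F_m$ with $m \geq k+2$ is a power of $2$; this needs proof.

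The paper avoids all of this and is worth contrasting with your route. Its key tool is the purely combinatorial identity (Lemma \ref{lemkbona})
\[
\F_N \;=\; \sum_{j=0}^{k-1}\F_{\ell-j}\Bigl(\sum_{i=0}^{k-1-j}\F_{N-\ell-i}\Bigr),
\qquad 1 \leq \ell \leq N-1,
\]
applied with $N = 2n+2d-1$ and $2n+2d-2$, $\ell = n+d$. Direct term-by-term estimates then give the strict sandwich $\F_{2n+2d-2} < (\F_n)^2 + (\F_{n+d})^2 < \F_{2n+2d-1}$ for all $d \geq 1$ with $n+d>3$, and $\F_{2n-1} < 2(\F_n)^2 < \F_{2n}$ in the residual $d=0$ range (after the growth bounds $\alpha_1^{N-2} \leq \F_N \leq \alpha_1^{N-1}$ pin $m$ to $\{2n-1,2n,2n+1\}$). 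Since the sum of squares is trapped strictly between consecutive terms of the sequence, there are no solutions beyond the listed ones, with no transcendence theory, no dependence on $k$ to control, and no computation. If you want to salvage your approach, you would need to replace the asymptotic hand-wave by such an identity or by the $2$-adic/approximation estimates mentioned above; as written, the proof does not close.
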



Let $s \in \mathbb{Z}$, with $s \geq 2$. We now investigate the following question: what is the behavior of the sum of two $s$-powers of Fibonacci numbers, not necessarily consecutive or distinct? Namely, we aim to generalize identity \eqref{eq-classic} by considering the sum of two $s$-powers of arbitrary Fibonacci numbers, instead of the squares of two consecutive Fibonacci numbers, i.e., we aim to investigate the solutions of 
\begin{equation} \tag{II} \label{eq-secondgoal}
    F_n^s + F_{n+d}^s = F_m,
\end{equation}
for $d \in \Z$, $d \geq 0$. Geometrically, solving \eqref{eq-secondgoal} is equivalent to searching for the \textit{Fibonacci rational points} of the following Diophantine equation
\[ X^s + Y^s = Z.\]
If $d=1$,  Luca and Oyono prove in \cite{luca-oyono-11} that equation \eqref{eq-secondgoal} has no solution for $n \geq 2$ and $s \geq 3$. The following is Theorem \ref{thm-main2}, which yields a complete solution for \eqref{eq-secondgoal} and generalizes \cite[Thm. 1]{luca-oyono-11}.

\begin{thmA}  
Let $ d, n, m, s \in \Z$, with $ s \geq 2 $. The Diophantine equation  \eqref{eq-secondgoal}  
has no solution for $(n,d) \not\in \{(1,2),(1,1)\}$. If $s=2$, $n=1$ and $d=2$ provide the unique nontrivial solution. Moreover, if $s \geq 3$, the unique solution is $n = d = 1$.
\end{thmA}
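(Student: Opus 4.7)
The plan is to follow the standard linear-forms-in-logarithms strategy, using the Binet formula $F_n = (\alpha^n - \beta^n)/\sqrt{5}$, where $\alpha = (1+\sqrt{5})/2$ and $\beta = -1/\alpha$. The two families of exceptional solutions, namely $(n,d) = (1,1)$ for every $s \geq 2$ and $(n,d,s) = (1,2,2)$, are checked by hand. Moreover, the $s = 2$ case of \eqref{eq-secondgoal} is exactly \eqref{eq-firstgoal} with $k = 2$, so it reduces immediately to Theorem \ref{propfibquad}. The real content is thus the regime $n \geq 2$, $s \geq 3$, in which the subcase $d = 1$ is precisely the theorem of Luca and Oyono \cite{luca-oyono-11} and may be cited; it remains to treat $d = 0$ and $d \geq 2$.

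I would first apply the elementary bounds $\alpha^{n-2} \leq F_n \leq \alpha^{n-1}$ to both sides of \eqref{eq-secondgoal}, which forces any hypothetical solution to satisfy $m = s(n+d) + O(s)$. The degenerate subcase $d = 0$ becomes $2F_n^s = F_m$ and is dispatched by combining this size estimate with a $2$-adic valuation argument. For $d \geq 2$, substituting Binet, expanding $(\alpha^{n+d} - \beta^{n+d})^s$ by the binomial theorem, and isolating the dominant term $\alpha^{s(n+d)}$ against $\alpha^m$ gives the key inequality
\[
\Bigl|\,\alpha^{m-s(n+d)} \cdot 5^{(s-1)/2} - 1\,\Bigr| \;<\; C_1\, \alpha^{-ds},
\]
where $C_1$ is effective and the right-hand side gathers the contribution of $F_n^s$ together with all conjugate error terms coming from $\beta$.

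Next I would apply Matveev's theorem on nonzero linear forms in logarithms of algebraic numbers to
\[
\Lambda \;=\; (m - s(n+d))\log\alpha + \tfrac{s-1}{2}\log 5,
\]
which is nonzero because $\alpha$ and $\sqrt{5}$ are multiplicatively independent in $\overline{\Q}^\times$. Matveev furnishes a lower bound of the shape $|\Lambda| > \exp(-C_2\, \log s \cdot \log(n+d))$, and matching it against the upper bound above yields $ds < C_3\, \log s \cdot \log(n+d)$. Combined with the linear estimate $m = s(n+d) + O(s)$, this bounds all of $s$, $n$, $d$, $m$ in an explicit, if enormous, range.

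The decisive step, and the one I expect to be the main obstacle, is reducing these Matveev-type bounds to a computationally verifiable range. The plan is to apply a Baker--Davenport reduction based on the continued fraction expansion of $\log 5 / \log\alpha$ to cut down the bound on $ds$, iterated if necessary on a second, refined linear form incorporating the next subleading correction from $F_n^s$, in the spirit of \cite{luca-oyono-11}. A final exhaustive computer search over the remaining small triples $(n,d,s)$ then finishes the proof. Notably, the extra parameter $d$ actually works in our favor for $d \geq 2$, since the right-hand side $\alpha^{-ds}$ decays strictly faster than in the $d = 1$ case, making the reduction cleaner than in \emph{op.\ cit.}
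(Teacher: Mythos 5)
Your overall strategy (Binet, a linear form in logarithms, Matveev, continued-fraction reduction, then a computer search) is the same family of argument the paper uses, and your dispatching of $s=2$ via Theorem \ref{propfibquad} and of $d=1$, $s\ge 3$ via \cite{luca-oyono-11} is fine. But there is a genuine gap at the heart of the plan: you apply Matveev only to the two-logarithm form $\Lambda=(m-s(n+d))\log\alpha+\tfrac{s-1}{2}\log 5$, against a claimed upper bound $C_1\alpha^{-ds}$. First, that upper bound is not correct with $C_1$ an absolute constant. After dividing by $\alpha^{s(n+d)}$, the binomial/conjugate error from replacing $(F_{n+d}\sqrt5)^s$ by $\alpha^{s(n+d)}$ is of size $2^s\alpha^{-2(n+d)}$ (this is the paper's Lemma \ref{lema3.4}), and this term \emph{dominates} $\alpha^{-ds}$ whenever $2(n+d)<ds+s\log_\alpha 2$, i.e.\ roughly whenever $n+d\lesssim 1.7\,s$ --- a regime you cannot exclude a priori. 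This is precisely why the paper needs the refined estimate in Lemma \ref{lema3.4} (valid only for $n>\log_\alpha s$) and splits the proof into the cases $n+d\gtrless 2\log_\alpha s$ and $d\le 3$ versus $d\ge 4$.

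Second, and more seriously, even if your inequality held, matching it against Matveev's lower bound for $\Lambda$ (which, since $\alpha$ and $\sqrt5$ have absolute heights and the coefficients are $O(s)$ by the size estimate, is of the shape $\exp(-C\log s)$ --- your extra factor $\log(n+d)$ has no source) gives only $ds<C_3\log s$. That bounds $d$ and $s$, but $n$ never appears in this linear form except through $m-s(n+d)=O(s)$, so nothing bounds $n$ or $m$, and your ``explicit, if enormous, range'' is in fact infinite in the $n$-direction. The paper's essential additional ingredient, which your proposal omits, is a \emph{first} application of Matveev to the three-logarithm form $\alpha^m\,5^{-1/2}F_{n+d}^{-s}-1$, where $F_{n+d}$ itself is one of the algebraic numbers (so its height $\sim n+d$ enters the constant $\lambda$) and the upper bound $2(2/3)^s$ comes from $(F_n/F_{n+d})^s$. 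This yields $s<C(n+d)\log\big(s(n+d)\big)$, coupling $s$ to $n+d$; only the combination of the two linear forms bounds all four variables. Without it the final exhaustive search cannot be carried out.
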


Finally, we return to identity \eqref{eq-classic} and pose the following question: what is the behavior of the sum of \(s\)-powers of arbitrary consecutive Fibonacci numbers? Namely, we aim to investigate the solutions of
\begin{equation} \tag{III} \label{eq-thirdgoal}
    F_n^s + F_{n+1}^s + \cdots + F_{n+d}^s = F_m,
\end{equation}  
where $s,d \in \Z$ with $s \geq 3$ and $d \geq 2$. Geometrically, solving \eqref{eq-thirdgoal} is equivalent to searching for consecutive \textit{Fibonacci rational points} of the following Diophantine equation 
\[ X_1^s + \cdots + X_d^s = Y. \]  
In this context, our main result is Theorem \ref{thm-main3}, which we state below. It addresses this problem with a minor restriction on the number of terms involved.

\begin{thmA} 
    Let $m,n,s,d \in \Z$ be such that $n\geq3$, $s\geq3$, and $d\geq2$. 
    If $d+1<n$, then the Diophantine equation \eqref{eq-thirdgoal} 
has no solution. 
\end{thmA}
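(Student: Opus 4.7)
The plan is to deploy the standard Fibonacci--Baker paradigm: convert the equation to an approximate algebraic identity via Binet's formula, extract one or more linear forms in logarithms of algebraic numbers, apply Matveev-type lower bounds, and sweep up the finitely many surviving parameter triples by a Baker--Davenport reduction. The hypothesis $d+1<n$ is what guarantees that the leading term $F_{n+d}^{s}$ truly dominates the sum and that the $\beta$-parts of Binet's formula produce only manageable errors.

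First, from $\alpha^{j-2}\le F_{j}\le\alpha^{j-1}$ and $F_{n+d}^{s}\le F_{m}\le (d+1)F_{n+d}^{s}$ one extracts $m=s(n+d)+O(\log d)$. Substituting $F_{j}=(\alpha^{j}-\beta^{j})/\sqrt{5}$ and expanding
\[
(\alpha^{n+i}-\beta^{n+i})^{s}\;=\;\alpha^{s(n+i)}\bigl(1-(-\alpha^{-2})^{n+i}\bigr)^{s}
\]
via $\beta=-1/\alpha$, the equation passes to the approximate identity
\begin{equation*}
\alpha^{m}\;=\;5^{(1-s)/2}\,\alpha^{sn}\cdot\frac{\alpha^{s(d+1)}-1}{\alpha^{s}-1}\;+\;\mathcal{E},\qquad |\mathcal{E}|\;\ll\;s\,\alpha^{(s-2)(n+d)}.
\end{equation*}
Dividing by the main term produces the linear form in three algebraic logarithms
\begin{equation*}
\Lambda\;=\;m\log\alpha-sn\log\alpha-\log\gamma_{s,d},\qquad \gamma_{s,d}\;:=\;5^{(1-s)/2}\cdot\frac{\alpha^{s(d+1)}-1}{\alpha^{s}-1}\in\mathbb{Q}(\sqrt{5}),
\end{equation*}
satisfying $|\Lambda|\ll s\,\alpha^{-2(n+d)}$. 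A parallel one-term linear form obtained by keeping only $F_{n+d}^{s}$ as the main term involves just the two logarithms $\log\alpha$ and $\log 5$ with error $\ll d\,\alpha^{-s}$; Matveev applied to it bounds $s$ from above in terms of $\log d$.

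Matveev's theorem applied to $\Lambda$, combined with the Weil-height estimate $h(\gamma_{s,d})\ll sd$ and the bound on $s$ from the one-term form, traps $n$, $d$, and $s$ inside an effectively computable region; a Baker--Davenport reduction using the continued fraction expansion of $\log\alpha/\log 5$ then rules out the remaining finite list. The principal obstacle is that $h(\gamma_{s,d})$ grows linearly in $sd$, so the Matveev lower bound on $|\Lambda|$ degrades with $d$, and only the exponential factor $\alpha^{-2(n+d)}$ can compensate. The hypothesis $d+1<n$ is precisely what creates enough room for this inequality to close, and coordinating the one-term and three-term linear forms so that all three parameters are simultaneously bounded is the technical heart of the argument.
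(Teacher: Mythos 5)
Your overall architecture (Binet expansion, linear forms in logarithms, Matveev, continued-fraction reduction) is the right family of tools, and your upper bound $|\Lambda|\ll s\,\alpha^{-2(n+d)}$ for the main form is correct, but the way you configure the two linear forms leaves the parameters unbounded. The central problem is that you absorb the entire geometric sum into the algebraic number $\gamma_{s,d}=5^{(1-s)/2}(\alpha^{s(d+1)}-1)/(\alpha^{s}-1)$. Since $(\alpha^{s(d+1)}-1)/(\alpha^{s}-1)=1+\alpha^{s}+\cdots+\alpha^{sd}$ is a quadratic integer of size about $\alpha^{sd}$, one has $h(\gamma_{s,d})\asymp sd$, and the coefficient $m-sn$ is also $\asymp sd$. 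Matveev therefore gives only $|\Lambda|>\exp\bigl(-C\,sd\log(sd)\bigr)$, and comparison with $s\,\alpha^{-2(n+d)}$ yields $n+d\ll sd\log(sd)$. You assert that the hypothesis $d+1<n$ "creates enough room for this inequality to close," but it acts in the wrong direction: it caps $d$ by $n$, not $n$ by $d$, so with $d$ as large as $n-2$ the right-hand side is $\gg n\log n$ even for bounded $s$ and the inequality is vacuous. The paper never places the sum inside the algebraic number: its first form is $\alpha^{m}5^{-1/2}F_{n+d}^{-s}-1$, whose third logarithm is $\log F_{n+d}$ and whose upper bound $<0.9^{s}$ (from $F_{n+i}/F_{n+d}<(2/3)^{d-i}$) decays in $s$ rather than in $n+d$; this is what produces $s\ll(n+d)\log(n+d)$ and, via $d+1<n$, the usable bound $s\ll n\log n$.

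Second, your claim that the auxiliary two-logarithm form has error $\ll d\,\alpha^{-s}$ and hence "bounds $s$ in terms of $\log d$" is unjustified when $s$ is large relative to $n$: the Binet estimate $\bigl|(F_{j}\sqrt{5})^{s}-\alpha^{js}\bigr|\ll s\,\alpha^{j(s-2)}$ holds only for $j>\log_{\alpha}(s)$ (and degrades to $2^{s}\alpha^{j(s-2)}$ otherwise), so after normalization the two-log form carries an additional term of order $s\,\alpha^{-2(n+d)}$, which is not small unless one already knows $s\lesssim\alpha^{n}$. That a priori bound is precisely what the three-logarithm form supplies, so the two steps cannot be run in your order; and even granted it, the two-log form yields only $\min\{n,s-1\}\ll\log s$, and one must invoke $s\ll n\log n$ once more to eliminate the branch $n\ll\log s$. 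Finally, after the continued-fraction reduction the paper still has to dispose of $3\le s\le 18$ by separate elementary minimization arguments (bounding $n$, respectively $m+d$, directly) before the finite computer search; a single Baker--Davenport sweep over the four parameters $(n,d,s,m)$ is not available.
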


It is worth mentioning that solving most of these Diophantine equations often requires the use of computational tools. These tools are crucial, for instance, for applying the Dujella-Pethő Lemma (Lemma \ref{lemma-dujella}), a key result for refining upper bounds for the variables involved. Furthermore, once the variables have been bounded, the use of computational tools is essential for verifying these Diophantine equations in the remaining finite cases. A brief explanation of the use of these computational tools is provided in the appendix of the paper.


\section{Background} 

In this section, we gather basic facts about $k$-generalized Fibonacci numbers and standard tools that we will need in the following sections. Throughout the article, we assume $d,k,m,n, s \in \Z$, with $d \geq 0$, $k \geq 2$ and $s \geq 2$.

\subsection*{Elementary facts.} Let 
\[  f_k(T) := T^k - T^{k-1} - \cdots - T - 1 \in \Z[T],\]
be the characteristic polynomial of $(F_{n}^{(k)})_n$.  It is well known that $f_k(T)$ is an irreducible polynomial over $\Q[T]$ with simple roots, cf.\ either \cite{miles-60} or \cite[Cor. 3.4 and 3.8]{wolfram-98}. Moreover, its roots $ \alpha_1, \ldots, \alpha_k \in \C$, can be ordered such that
\[  3^{-k} < |\alpha_k | \leq \cdots  \leq |\alpha_2 | < 1 < |\alpha_1 |   \]
where actually $\alpha_1 \in \R$, cf.\ \cite{miles-60}. The root $\alpha_1 \in \R$ is called the \emph{dominant root} of either $f_k(T)$ or $(F_{n}^{(k)})_n$. If $k=2$,  we denote $\alpha := \alpha_1$ and $\beta := \alpha_2$. In this case, $\alpha = (1+ \sqrt{5})/2$ and $\beta = -\alpha^{-1}$. 

In the following lemma, we summarize several useful facts.

\begin{lemma} \label{kfibbasic} With the notation introduced above. 
    \begin{enumerate}
       \item[(i)] The inequality 
        $ \alpha_{1}^{n-2} \leq \F_n \leq \alpha_{1}^{n-1}$,
        holds for every $n \geq 1$. 
        
        \item[(ii)] $2(1-2^{-k}) < \alpha_1 < 2$.
        
        \item[(iii)] If $2 \leq n \leq k+1$, then $\F_n = 2^{n-2}$. 

        \item[(iv)] The identities  $ F_n = (\alpha^n - \beta^n)/\sqrt{5}$ and $\alpha^n= \alpha F_n + F_{n-1}$ hold for every $n \geq 0$. 
       
       \item[(v)] The inequality $F_n/F_{n+d} < (2/3)^d$ holds for every $n \geq 2$ and $d \geq 1$.    
\end{enumerate}
\end{lemma}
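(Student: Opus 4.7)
The lemma collects five standard facts. My plan is to prove them in an order that lets each part lean on earlier ones: first (iii) and (iv) as warm-ups, then the quantitative bound (ii) on $\alpha_1$, then (i), and finally (v).

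For (iii), I would proceed by strong induction on $n$. Since the initial values $\F_{-k+2}, \ldots, \F_0$ all vanish, the recurrence collapses for $2 \leq n \leq k+1$ to $\F_n = \F_{n-1} + \cdots + \F_1$, and the inductive hypothesis turns this into a doubling geometric sum yielding $2^{n-2}$. Part (iv) is the classical Binet formula, obtained by solving the second-order linear recurrence using the roots of $T^2 - T - 1$; the companion identity $\alpha^n = \alpha F_n + F_{n-1}$ is then a one-line induction using $\alpha^2 = \alpha + 1$.

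For (ii), the upper bound follows from the telescoping identity $f_k(2) = 2^k - (2^{k-1} + \cdots + 2 + 1) = 1 > 0$, which, combined with $\alpha_1 > 1$ and the fact that $\alpha_1$ is the largest real root of $f_k$, yields $\alpha_1 < 2$. For the lower bound I would evaluate $f_k$ at $\tau := 2(1 - 2^{-k})$ and verify $f_k(\tau) < 0$ by direct expansion, which forces $\alpha_1 > \tau$. With (ii) and (iii) in hand, (i) follows by induction on $n$: the base cases $1 \leq n \leq k+1$ reduce via (iii) to comparing $2^{n-2}$ with $\alpha_1^{n-1}$ and $\alpha_1^{n-2}$, both immediate from $\alpha_1 < 2$ and the quantitative lower bound on $\alpha_1$; the inductive step combines the $k$-term recurrence for $\F_n$ with the polynomial identity $\alpha_1^k = \alpha_1^{k-1} + \cdots + 1$ to propagate both inequalities in parallel.

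Finally, part (v) reduces to the auxiliary claim $F_{m+1} \geq (3/2) F_m$ for $m \geq 3$: since $F_{m-1} \geq F_m/2$ in this range, $F_{m+1} = F_m + F_{m-1} \geq (3/2) F_m$, and iterating $d$ times gives $F_{n+d} \geq (3/2)^d F_n$ for $n \geq 2$ and $d \geq 1$, which rearranges to the stated bound. The only genuine obstacle I foresee is the careful polynomial arithmetic in the lower bound of (ii); the remaining parts are essentially routine inductions, so in the writeup I would likely just cite standard references (e.g., Miles, Wolfram, Dresden--Du, or Bravo--Luca) for (i)--(iv) and give (v) in full.
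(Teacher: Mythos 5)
Your proposal is correct in substance and, since the paper's own ``proof'' of this lemma is nothing more than a list of citations (Bravo--Luca for (i) and (iii), Wolfram for (ii), Binet plus a one-line induction for (iv), and the single remark that $F_n/F_{n+1}<2/3$ for (v)), your fleshed-out standard arguments are at least as complete as what the paper offers; the inductions you sketch for (i)--(iii) are exactly the ones in the cited sources. One point deserves flagging, and it concerns the only part you prove in full. Your iteration of $F_{m+1}\geq \tfrac{3}{2}F_m$ yields $F_{n+d}\geq (3/2)^d F_n$, hence $F_n/F_{n+d}\leq (2/3)^d$, which does \emph{not} ``rearrange to the stated bound'': the strict inequality in item (v) is actually false at $n=3$, $d=1$, where $F_3/F_4=2/3$ exactly. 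This is a defect of the lemma as stated (the paper's own justification, $F_n/F_{n+1}<2/3$, fails at the same index), not of your argument; the non-strict version you actually prove is what is needed in every application in the paper (e.g.\ the bound $(F_n/F_{n+d})^s\leq(2/3)^s$ in the proof of Theorem \ref{thm-main2}), so you should simply state and prove (v) with $\leq$, or exclude $(n,d)=(3,1)$.
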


\begin{proof}
   The proof of these facts can be found in several references. For instance, the first item is given by \cite[Lemma 1]{bravo-luca-publ-13}, the second item by \cite[Lemma 3.6]{wolfram-98}, and the third item follows from \cite{bravo-luca-JNT-13}. The first identity in the fourth item is the well-known \emph{Binet's formula}, which also has an analogue version for \(k > 2\) (see, e.g., \cite{dresden-du-14}). The second identity in the fourth item can be proved by simple induction. The final item follows from the fact that \( F_n / F_{n+1} < 2/3 \).
\end{proof}

\subsection*{Matveev's Lemma} This Lemma is a standard tool used to provide a lower bound for linear forms in logarithms, \emph{à la Baker}. Since we will need to calculate the terms and constants that appear in the lemma, we state it below in its complete version.  

\begin{lemma}[Matveev] \label{lemma-matveev} Let $\gamma_1, \ldots, \gamma_n \in \R_{> 0} \cap \overline{\Q} \setminus \{0,1\}$ and $b_1, \ldots, b_n \in \Q$. If 
$ \gamma_{1}^{b_1}  \cdots  \gamma_{n}^{b_n} \neq 1,$
then 
\[ (eB)^{-\lambda} <   \big| \gamma_{1}^{b_1}  \cdots  \gamma_{n}^{b_n} - 1 \big|, \]
where $B := \max\{\big|b_1\big|, \ldots, \big|b_n\big|\}$ and $\lambda \in \R$ is described as follows.
Let $K$ be a degree $\ell$ number field containing  
$\gamma_1, \ldots, \gamma_n$. We define
\[ C_{n,\ell} := 1.4\cdot 30^{n+3} \cdot n^{4.5} \cdot \ell^2 (1+ \log(\ell)).\]
Let $A_1, \ldots, A_n \in \R$ be such that 
 $A_i \geq \max \big\{\ell h(\gamma_i), \big|\log(\gamma_i)\big|, 0.16 \big\}$,
where, for $\gamma \in \overline{\Q}$ of degree $d$:
\[ h(\gamma) :=  \frac{1}{d} \left( \log(a_0) + \sum_{i=1}^{d} \log(\max\{\gamma^{(i)},1\}) \right)\]
is the logarithmic height of $\gamma$ and 
\[ p(T) := a_0 \prod_{i=1}^{d} (T - \gamma^{(i)}) \in \Z[T]\]
is the minimal primitive polynomial of $\gamma$ having positive leading coeficient. Therefore, 
\[ \lambda := C_{n,\ell} \prod_{i=1}^n A_i  .\]
In particular, $\lambda $ is an effectively computable constant depending only on $\gamma_1, \ldots, \gamma_n.$
\end{lemma}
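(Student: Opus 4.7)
The statement is Matveev's explicit lower bound for linear forms in logarithms of algebraic numbers. The plan is to follow Matveev's original strategy, which combines Baker's method with Laurent's interpolation determinants and Philippon's zero estimates on the multiplicative group $\mathbb{G}_m^n$, so as to produce an effective bound with the precise dependence on $n$ and $\ell$ claimed in the statement.

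First, I would reduce to the case in which the exponents $b_i$ are coprime integers by clearing denominators (absorbing them into the $\gamma_i$ through the operation $\gamma \mapsto \gamma^{1/q}$, which only affects $\ell$ and $h(\gamma_i)$ in a controlled way), and translate the multiplicative statement into one about the additive linear form
\[
    \Lambda \ := \ b_1 \log \gamma_1 + \cdots + b_n \log \gamma_n,
\]
using the elementary inequality $|e^x - 1| \geq |x|/2$ for $|x| \leq 1$. Assuming by contradiction that $|\Lambda|$ is substantially smaller than the proposed bound, I would construct an auxiliary interpolation determinant $\Delta$: an $N \times N$ matrix (with $N = \prod_{i=0}^{n}(L_i+1)$) whose rows are indexed by multi-indices $(\lambda_0, \ldots, \lambda_n)$ with $0 \leq \lambda_i \leq L_i$ and whose columns are indexed by pairs $(s,t)$ with $0 \leq t \leq T$, having entries of the shape
\[
    \binom{\lambda_0 s + \lambda_n s b_n/b_0}{t}\, \gamma_1^{\lambda_1 s} \cdots \gamma_{n-1}^{\lambda_{n-1} s}\, \gamma_n^{\lambda_n s b_n/b_0}.
\]
The parameters $L_i$ and $T$ are to be tuned proportionally to the $A_i$ and $B$ so that the final constant exhibits the shape $C_{n,\ell} \prod_i A_i$.

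Next, I would derive two competing estimates for $|\Delta|$. On the analytic side, one rewrites $\Delta$ as an interpolation value of a holomorphic function of one complex variable and applies the Schwarz lemma together with the smallness of $|\Lambda|$ to obtain an upper bound of the form
\[
    |\Delta| \ \leq \ \exp\big( -c\, N T\, |\Lambda|^{-1} + (\text{polynomial in the parameters}) \big).
\]
On the arithmetic side, provided $\Delta \neq 0$, the determinant is an algebraic number of degree at most $\ell$ whose logarithmic height is controlled via Hadamard's inequality by the bounds $h(\gamma_i^{\lambda_i s}) \leq \lambda_i s\cdot h(\gamma_i) \leq L_i B A_i/\ell$, so a Liouville-type inequality gives $|\Delta| \geq \exp(-c' \ell H(\Delta))$. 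Comparing these two bounds and optimizing the parameters forces $|\Lambda|$ to be at least as large as the claimed lower bound, and hence so is $|\gamma_1^{b_1}\cdots\gamma_n^{b_n}-1|$.

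The nonvanishing $\Delta \neq 0$ is guaranteed by invoking Philippon's zero estimate on $\mathbb{G}_m^n$: an identically vanishing determinant of this shape would force the point $(\gamma_1, \ldots, \gamma_n)$ to lie in a proper algebraic subgroup, a case eliminated by an inductive reduction on $n$ after splitting off the resulting multiplicative relation. The principal obstacle is the explicit bookkeeping of constants: the factor $30^{n+3}$ traces to the combinatorial loss in the zero estimate; the $n^{4.5}$ arises from Hadamard's inequality applied to the $N \times N$ determinant combined with Schwarz-lemma losses; and the $\ell^2(1+\log\ell)$ originates from the Liouville estimate performed over the Galois closure of $K$. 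Matveev's key improvement over earlier Baker-type bounds was the removal of an $n!$-sized loss through a careful reorganization of both the determinant and the zero estimate, and replicating that optimization is the most technically delicate step of the argument.
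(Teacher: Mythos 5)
The paper does not prove this lemma at all: it is quoted as an external black box, with the proof consisting of a citation to Baker, to Matveev's original paper, and to Theorem 9.4 of Bugeaud--Mignotte--Siksek. So there is no internal argument to compare yours against; the only question is whether your sketch would stand on its own as a proof, and it would not. What you have written is an accurate architectural summary of Matveev's method --- Laurent-style interpolation determinants, the competing analytic (Schwarz lemma) and arithmetic (Liouville/Hadamard) estimates, and a Philippon-type zero estimate on $\mathbb{G}_m^n$ to rule out the vanishing of $\Delta$, with an induction on $n$ when the point falls into a proper algebraic subgroup. But every quantitative step is deferred: the parameters $L_i$, $T$ are never chosen, neither estimate on $|\Delta|$ is actually derived, and the specific constant $1.4\cdot 30^{n+3} n^{4.5}\ell^2(1+\log \ell)$ --- which is the entire content of the lemma as used in this paper, since the later sections plug in explicit numerical values of $C_{3,2}$ and $C_{2,2}$ --- is only ascribed to various sources of loss rather than computed. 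A reader could not reconstruct the theorem from your outline; they would have to go to Matveev's papers, which is exactly what the authors' citation already directs them to do.

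One concrete point to flag: your opening reduction, absorbing rational exponents via $\gamma \mapsto \gamma^{1/q}$, replaces the $\gamma_i$ by algebraic numbers of strictly larger degree, which changes $\ell$ and hence the constant $C_{n,\ell}$. As stated this does not recover the lemma with the claimed constant for the original field $K$. (In this paper the issue is moot, since every application has integer exponents $b_i$, and Matveev's theorem is in fact stated for integers; the ``$b_i \in \Q$'' in the lemma as quoted here is already a slight liberty taken by the authors.) For a result of this depth, proof by citation is the appropriate treatment, and your effort would be better spent verifying that the hypotheses and the numerical values of $C_{n,\ell}$ and the $A_i$ are applied correctly in Sections 4 and 5.
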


\begin{proof}
    See \cite{baker-75} and \cite{matveev-00} for an explicit description of $\lambda\in \R$. We also refer to \cite[Thm. 9.4]{bugeaud-mignotte-siksek-06}.
    \end{proof}
 

\subsection*{Continued fractions} Let $x \in \R$. As usual, we denote the simple continued fraction representation of $x$ by $ [a_0; a_1, a_2,\ldots ]$. For the reader's convenience, we recall the following classical result. It provides a key tool to refine the bounds obtained for the variables involved in the Diophantine equations.

\begin{lemma}[Legendre] \label{lemma-legendre}
Let $\gamma \in \R \setminus \Q$ and $p/q \in \Q \setminus \{0\}$. If
\[\left| \gamma - \frac{p}{q} \right| < \frac{1}{2q^2},\]
then $p/q$ is a convergent of the simple continued fraction representation of $\gamma$.
\end{lemma}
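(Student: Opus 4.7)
The plan is to prove Legendre's lemma by exhibiting $p/q$ as a convergent of $\gamma$ through a tail-extension construction. Normalizing so that $\gcd(p,q)=1$ and $q\geq 1$, I set $\theta:=q^2(\gamma-p/q)$, so that the hypothesis reads $0<|\theta|<1/2$ (with $\theta\neq 0$ because $\gamma\notin\Q$). As the first step I would write $p/q=[a_0;a_1,\ldots,a_n]$ as a finite simple continued fraction, and use the classical ambiguity $[a_0;a_1,\ldots,a_n]=[a_0;a_1,\ldots,a_n-1,1]$ (valid for $a_n\geq 2$, together with the integer analogue $[a_0]=[a_0-1;1]$) to adjust the parity of $n$ so that $(-1)^n$ has the same sign as $\theta$. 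Denote by $p_i/q_i$ the $i$-th convergent of this representation, with the standard conventions $p_{-1}:=1$ and $q_{-1}:=0$; in particular $p_n/q_n=p/q$.

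Next, I would define $\omega\in\R$ by the relation
\[\gamma=\frac{\omega p_n+p_{n-1}}{\omega q_n+q_{n-1}}.\]
Solving for $\omega$ and invoking the determinant identity $p_{n-1}q_n-p_n q_{n-1}=(-1)^n$ together with $\gamma-p/q=\theta/q_n^2$ yields
\[\omega=\frac{(-1)^n}{\theta}-\frac{q_{n-1}}{q_n}.\]
The parity choice ensures $(-1)^n/\theta=1/|\theta|>2$, while $0\leq q_{n-1}/q_n<1$, so $\omega>1$. Since $\gamma$ is irrational, $\omega$ must also be irrational, and I may expand $\omega=[b_0;b_1,b_2,\ldots]$ with $b_0\geq 1$; substituting back produces the infinite simple continued fraction expansion
\[\gamma=[a_0;a_1,\ldots,a_n,b_0,b_1,\ldots],\]
which by uniqueness displays $p/q=p_n/q_n$ as the $n$-th convergent of $\gamma$.

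The main technical point, and the place where the precise bound $|\theta|<1/2$ is used, is the inequality $\omega>1$: that strict bound is exactly what is needed in order to read $\omega$ as a bona fide complete quotient in an ongoing continued fraction expansion, and anything weaker (e.g.\ $|\theta|<1$) would fail to guarantee it. The parity-juggling step requires a little care and is what forces the flexibility in the representation of $p/q$, but beyond that the proof reduces to routine algebra with the recursions defining the convergents.
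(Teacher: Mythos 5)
Your argument is correct and is the classical proof of Legendre's theorem (extend the finite continued fraction of $p/q$ by the complete quotient $\omega$, using the parity ambiguity of finite expansions to make $\omega>1$). The paper itself gives no proof here --- it simply cites \cite[Thm.~3.18]{gugu-18}, and the proof in that reference is essentially the one you wrote, so there is nothing substantive to compare. One microscopic imprecision: the bound $q_{n-1}/q_n<1$ can fail (e.g.\ $n=1$, $a_1=1$ gives $q_0=q_1=1$), but since $(-1)^n/\theta=1/|\theta|>2$ strictly, the conclusion $\omega>1$ survives with $q_{n-1}/q_n\leq 1$, so nothing breaks.
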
 

\begin{proof} See \cite[Thm. 3.18]{gugu-18}.
\end{proof}

\subsection*{Dujella-Pethő's Lemma} This lemma provides an important criterion for the nonexistence of solutions to certain Diophantine inequalities involving the approximation of irrational numbers by continued fractions. In its statement, for \(x \in \mathbb{R}\), the norm \(\lVert x \rVert\) is taken as the distance from \(x\) to the nearest integer, i.e., \(\displaystyle \lVert x \rVert := \min_{n \in \mathbb{Z}} |x - n|\).

\begin{lemma}[Dujella-Pethő] \label{lemma-dujella}
Let $\ell \in \Z_{> 0}$, $\mu \in \R$ and $\gamma \in \R\setminus \Q$. Let \( p/q \in \Q \) be a convergent of the continued fraction expansion of \(\gamma\) such that \(q > 6\ell\). 
Let \(\epsilon := \lVert \mu q \rVert - \ell \lVert  \gamma q \rVert\). If \(\epsilon > 0\), then the inequality  
\[0 < n\gamma - m + \mu < AB^{-n}\]
has no solution for \(n, m \in \Z_{>0} \) with  
\[\frac{\log(Aq/\epsilon)}{\log(B)} \leq n \leq \ell.\]
\end{lemma}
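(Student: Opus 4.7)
The plan is to argue by contradiction. Suppose $(n,m)\in \Z_{>0}^{2}$ satisfies $0 < n\gamma - m + \mu < AB^{-n}$ with $\log(Aq/\epsilon)/\log(B) \le n \le \ell$. Write $q\gamma = p + \delta$ with $|\delta| = \lVert q\gamma\rVert$, and $\mu q = M + \eta$ with $M\in\Z$ and $|\eta| = \lVert \mu q\rVert \le 1/2$. Multiplying the displayed inequality by $q$ rewrites the hypothesis as
\[ 0 \;<\; I + (n\delta + \eta) \;<\; qAB^{-n} \;\le\; \epsilon, \]
where $I := np - mq + M \in \Z$, and the final estimate uses the assumed lower bound on $n$.

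The crux is to pin down the integer $I$. The hypothesis $q > 6\ell$ combined with the standard convergent bound $\lVert q\gamma\rVert < 1/q$ gives $|n\delta| \le \ell \lVert q\gamma\rVert < 1/6$, so $|n\delta + \eta| < 2/3$. Since $\epsilon \le \lVert\mu q\rVert \le 1/2$, the relation $0 < I + (n\delta + \eta) < 1/2$ forces $I \in \{0,1\}$.

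Next I would rule out each case separately. If $I = 1$, then $n\delta + \eta \in (-1, \epsilon - 1)$, hence $|n\delta + \eta| > 1 - \epsilon$. The triangle inequality gives $|n\delta + \eta| \le \ell\lVert q\gamma\rVert + \lVert\mu q\rVert$, and substituting $\epsilon = \lVert\mu q\rVert - \ell\lVert q\gamma\rVert$ reduces this to $\lVert\mu q\rVert > 1/2$, which is absurd. If $I = 0$, then $0 < n\delta + \eta < \epsilon$, whereas $\epsilon > 0$ forces $|\eta| > |n\delta|$ and so the reverse triangle inequality gives $|n\delta + \eta| \ge \lVert\mu q\rVert - \ell\lVert q\gamma\rVert = \epsilon$, the desired contradiction.

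The main obstacle is the case analysis for the integer $I$: the hypothesis $q > 6\ell$ is present precisely to make $\ell\lVert q\gamma\rVert < 1/6$, which confines $I$ to two values; and both inequalities implicit in the definition $\epsilon = \lVert\mu q\rVert - \ell\lVert q\gamma\rVert$ are needed, one to eliminate $I = 1$ via $\lVert\mu q\rVert \le 1/2$, and the other to eliminate $I = 0$ via the reverse triangle inequality.
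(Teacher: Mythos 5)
Your proof is correct. The paper gives no proof of this lemma --- it only cites the original reference of Dujella and Peth\H{o} --- and your argument is exactly the standard one from that source: multiply through by $q$, isolate the integer $I = np - mq + M$, use $q > 6\ell$ together with the convergent bound $\lVert q\gamma\rVert < 1/q$ to confine $I$ to $\{0,1\}$, and eliminate each case from the two inequalities packaged in $\epsilon = \lVert \mu q\rVert - \ell\lVert \gamma q\rVert > 0$. All steps check out.
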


\begin{proof} See \cite{dujella-petho-98}.
\end{proof}


\section{Sum of two squares of $k$-generalized Fibonacci numbers}

As previously mentioned, the fact that the sum of the squares of two consecutive Fibonacci numbers belongs to the Fibonacci sequence has sparked the exploration of various problems. This has urged the development of both elementary and sophisticated tools. This section addresses a natural question that arises from this fact. Namely, in this section, we solve the following Diophantine equation
\[ \big( \F_n \big)^2 + \big( \F_{n+d} \big)^2 = \F_m \]
for $k \geq 2$ and $d \geq 1$.

We first introduce a key lemma that slightly improves  Lemma \ref{kfibbasic}, item (i), for $k=2$.  

\begin{lemma} \label{desigfibonacci}
   The inequality
    \[
    \alpha^{n-7/4} < F_n < \alpha^{n-3/2} 
    \]
    holds for all $n \geq 3$. 
\end{lemma}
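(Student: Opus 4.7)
The plan is to leverage Binet's formula (Lemma \ref{kfibbasic}, item (iv)) to reduce both inequalities to a single framework. Writing $F_n = (\alpha^n - \beta^n)/\sqrt{5}$ with $\beta = -1/\alpha$, so that $\beta^n = (-1)^n \alpha^{-n}$, and dividing through by $\alpha^n > 0$, the claim becomes
\[
\sqrt{5}\,\alpha^{-7/4} \;<\; 1 - (-1)^n \alpha^{-2n} \;<\; \sqrt{5}\,\alpha^{-3/2}.
\]
The middle expression equals $1 + \alpha^{-2n}$ for odd $n$ and $1 - \alpha^{-2n}$ for even $n$, so I will split the argument into parity cases for each inequality.

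For the upper bound, the even case is essentially immediate: $1 - \alpha^{-2n} < 1 < \sqrt{5}\,\alpha^{-3/2}$, where the last inequality squares to $\alpha^3 < 5$, following from $\alpha^3 = 2\alpha + 1 < 5$ (i.e. $\alpha < 2$). For odd $n \geq 3$, monotonicity gives $1 + \alpha^{-2n} \leq 1 + \alpha^{-6}$, so it suffices to show $1 + \alpha^{-6} < \sqrt{5}\,\alpha^{-3/2}$. I would square both sides and multiply by $\alpha^{12}$ to clear denominators, then use the identity $\alpha^n = F_n\alpha + F_{n-1}$ (Lemma \ref{kfibbasic}, item (iv)) to rewrite $\alpha^6 = 8\alpha + 5$, $\alpha^9 = 34\alpha + 21$, and $\alpha^{12} = 144\alpha + 89$; this collapses the claim to a trivial linear inequality in $\alpha$.

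For the lower bound, the odd case reduces to showing $\sqrt{5}\,\alpha^{-7/4} < 1$, i.e.\ $25 < \alpha^7$ after raising both sides to the fourth power, which is immediate from $\alpha^7 = 13\alpha + 8 > 25$ (equivalently $\alpha > 17/13$). The even case is genuinely tighter: by monotonicity of $1 - \alpha^{-2n}$ in $n$, it suffices to establish $\sqrt{5}\,\alpha^{-7/4} < 1 - \alpha^{-8}$ (the $n = 4$ case). I would rearrange this as $\sqrt{5}\,\alpha^{25/4} < \alpha^8 - 1$ and raise to the fourth power to obtain the purely algebraic inequality $25\alpha^{25} < (\alpha^8 - 1)^4$. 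Expanding the right-hand side via the binomial theorem and again invoking $\alpha^n = F_n\alpha + F_{n-1}$ for the relevant $n \in \{8, 16, 24, 25, 32\}$ reduces it to a linear inequality in $\alpha$ that follows from $\alpha > 0$.

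The main obstacle is the lower bound for even $n$: the exponent $7/4$ does not interact cleanly with the minimal polynomial of $\alpha$, and, unlike the odd case, I cannot drop the $\alpha^{-2n}$ correction because the numerical margin is slim (the two sides differ by roughly $0.015$ at $n = 4$). The device of raising to the fourth power is what clears the fractional exponent and renders everything algebraic, while the monotonicity reduction to $n = 4$ confines the verification to a single finite computation.
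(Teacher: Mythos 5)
Your argument is correct, but it takes a genuinely different route from the paper. The paper's proof is a two-line induction: it checks the base cases $n=3,4$ numerically, then adds the inductive hypotheses for $F_{n-1}$ and $F_n$ and uses $1+\alpha^{-1}=\alpha$ (i.e.\ $\alpha^2=\alpha+1$) to pass from $n$ to $n+1$; the fractional exponents $7/4$ and $3/2$ never need to be confronted algebraically because they simply ride along under the recurrence. Your proof instead normalizes by $\alpha^n$ via Binet's formula, splits on the parity of $n$, identifies the extremal cases ($n=3$ for the upper bound, $n=4$ for the lower bound) by monotonicity of $1\mp\alpha^{-2n}$, and clears the fractional exponents by raising to the fourth power, reducing everything to linear inequalities in $\alpha$ via $\alpha^j=F_j\alpha+F_{j-1}$ — I checked the key one, $25\alpha^{25}<(\alpha^8-1)^4$, which comes out to $0<123050\,\alpha+76050$ and is indeed trivially true. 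What the paper's induction buys is brevity and freedom from large-degree computations; what your approach buys is transparency about where the bounds are tight (the even-$n$ lower bound at $n=4$, with margin about $0.015$, is the binding constraint) and a fully non-inductive, purely algebraic certificate. Both are complete proofs.
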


\begin{proof} For $n=3$ and $4$, 
\[\alpha^{3-\frac{7}{4}}<1.83 <  F_3 < 2.05<\alpha^{3-\frac{3}{2}} \ \mbox{ and } \  \alpha^{4-\frac{7}{4}} < 2.96 <  F_4 < 3.33 < \alpha^{4-\frac{3}{2}}.\]
Assuming that the inequality holds for every $3 \leq k \leq n$, yields
    \[\alpha^{n-1-\frac{7}{4}} + \alpha^{n-\frac{7}{4}}< F_{n-1}+F_n < \alpha^{n-1-\frac{3}{2}} + \alpha^{n-\frac{3}{2}}.\]
Hence,  
\[\alpha^{n-\frac{7}{4}}(1+\alpha^{-1})< F_{n+1} < \alpha^{n-\frac{3}{2}}(1+\alpha^{-1}).\]
Since $\alpha^2=1+\alpha$, the lemma follows.
\end{proof}

By using the previous lemma and a few clever algebraic manipulations, we can exhibit all Fibonacci numbers that can be expressed as the sum of the squares of two Fibonacci numbers. This solves the problem proposed at the beginning of this section for $k=2$.

\begin{thm}
    \label{propfibquad}  The solutions for the Diophantine equation
    \begin{equation} \label{quadfib}
      F_{n}^2 + F_{n+d}^2 = F_m ,
    \end{equation}
    for $n\geq 1$, are $(n,d,m) \in \{(1,2,5), (1,0,3), (2,0,3), (3,0,6), (n,1,2n+1)\}$.
\end{thm}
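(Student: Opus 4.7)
The plan is to split the analysis by the size of $n+d$. First, I would handle the small regime $n+d \leq 2$ directly; there are only the three pairs $(n,d) \in \{(1,0),(1,1),(2,0)\}$ to check, each yielding $F_n^2 + F_{n+d}^2 = 2 = F_3$, and producing the solutions $(1,0,3)$, $(1,1,3)$ (which is the $n=1$ instance of the family $(n,1,2n+1)$), and $(2,0,3)$.

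For $n+d \geq 3$, the trivial sandwich $F_{n+d}^2 \leq F_m \leq 2F_{n+d}^2$ combined with the sharp estimates of Lemma \ref{desigfibonacci} applied to both $F_{n+d}$ and $F_m$ would yield
\[ 2(n+d) - 2 \;<\; m \;<\; 2(n+d) - \tfrac{5}{4} + \log_\alpha 2 \;<\; 2(n+d) + \tfrac{1}{5}, \]
so $m \in \{2(n+d)-1,\, 2(n+d)\}$. This is the technical heart of the argument: the coarser estimate in Lemma \ref{kfibbasic}(i) would leave a wider window, so the refinement in Lemma \ref{desigfibonacci} is essential for collapsing the situation to just two candidate values of $m$.

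For the case $m = 2(n+d)-1$, I would invoke the classical identity \eqref{eq-classic} in the form $F_{n+d-1}^2 + F_{n+d}^2 = F_{2(n+d)-1}$, which after subtraction reduces the equation to $F_n = F_{n+d-1}$. Since for positive indices the only coincidence of Fibonacci values is $F_1 = F_2$, this forces either $d = 1$ (recovering the family $(n,1,2n+1)$ for $n \geq 2$) or $(n,d) = (1,2)$ yielding $(1,2,5)$; the symmetric option $(n,d) = (2,0)$ lies outside the range $n+d \geq 3$ and was already recovered in the small-case enumeration.

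For the case $m = 2(n+d)$, I would use the identity $F_{2k} = F_k(F_{k-1} + F_{k+1})$, which after simplification gives $F_{2k} - F_k^2 = 2F_k F_{k-1}$; with $k = n+d$ the equation becomes $F_n^2 = 2F_{n+d}F_{n+d-1}$. If $d \geq 1$ then $F_{n+d-1} \geq F_n$ and $F_{n+d} > F_n$ (for $n \geq 1$), yielding $2F_{n+d}F_{n+d-1} > 2F_n^2$, a contradiction. If $d = 0$ the equation reduces to $F_n = 2F_{n-1}$, equivalently $F_{n-2} = F_{n-1}$, which by the same coincidence forces $n = 3$, producing $(3,0,6)$. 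The main obstacle is purely bookkeeping: confirming that each of the exceptional solutions arises in exactly one branch so that none is double-counted and none is missed, and double-checking the numerical value $\log_\alpha 2 \approx 1.44$ is small enough to collapse the three a priori possibilities for $m$ down to exactly two.
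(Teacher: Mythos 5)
Your argument is correct, and it is organized quite differently from the paper's. The paper splits into $d=0$, $n=1$, $n=2$, and $n>2$: the case $d=0$ is dispatched with Carmichael's primitive-divisor theorem (forcing $m\le 12$ and a finite search), the cases $n\in\{1,2\}$ use Lemma \ref{desigfibonacci} to trap $1+F_{d+1}^2$ strictly between $F_{2d}$ and $F_{2d+1}$, and the case $n>2$ (with $d\ge 2$) is a telescoping sandwich $F_{2n+2d-2}<F_n^2+F_{n+d}^2<F_{2n+2d-1}$ built from identity \eqref{eq-classic}, so that the index $m$ is never actually located. You instead locate $m$ first: the two-sided estimate from Lemma \ref{desigfibonacci} collapses the candidates to $m\in\{2(n+d)-1,\,2(n+d)\}$ (your numerical margin $-5/4+\log_\alpha 2\approx 0.19$ checks out), and each candidate is then resolved exactly, $F_{2k-1}=F_{k-1}^2+F_k^2$ reducing the first to $F_n=F_{n+d-1}$ and $F_{2k}=F_k(F_{k-1}+F_{k+1})$ reducing the second to $F_n^2=2F_{n+d}F_{n+d-1}$, each of which is solved by the elementary fact that $F_1=F_2$ is the only coincidence among positive-index Fibonacci values. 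Your route is more uniform ($d=0$ is absorbed into the general case rather than requiring Carmichael's theorem), entirely elementary, and makes each exceptional solution appear as the unique solution of an exact Fibonacci equation; the paper's route avoids any logarithmic estimate in its main case and yields the slightly stronger qualitative statement that the sum falls strictly between two consecutive Fibonacci numbers. Both proofs ultimately rest on the same two ingredients, namely Lemma \ref{desigfibonacci} and identity \eqref{eq-classic}.
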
 

\begin{proof} First we deal with the case $d=0$, which gives $2F_n^2 = F_m$. By Carmichael's theorem, we must have $m \leq 12$, and a quick search yields $(n,d,m) = (1,0,3), (2,0,3)$ or $(3,0,6)$. For $n=1$, equation \eqref{quadfib} becomes $1+ F_{d+1}^2=F_m$. If $d\geq3$, we obtain
\[1+ F_{d+1}^2 < F_d^2 + F_{d+1}^2 = F_{2d+1}.\]
On the other hand, from Lemma \ref{desigfibonacci} 
\[ F_{2d}<(\alpha^{d + 1 -\frac{7}{4}})^2 < F_{d+1}^2 < 1 + F_{d+1}^2. 
\]
Thus, $1 + F_{d+1}^2$ falls between two consecutive Fibonacci numbers and cannot belong to the sequence, leading to no solutions in this case. If $n=1$ and $d=2$, we have $(n,d,m)=(1,2,5)$. 
Analogously, for $n=2$ and $d\geq2$, the same argument ensures that
\[F_{2d+2} < 1 +F_{d+2}^2 < F_{2d+3},\]
then the given case leads to no solutions. Finally, if $n>2$, first notice that
    \[F_n^2+ F_{n+d}^2 < F_{n+d-1}^2 + F_{n+d}^2 = F_{2n+2d-1}.\]
On the other hand,
\[
F_{2n+2d-1} = F_{n+d-1}^2 + F_{n+d}^2 < F^2_n  + F_{n+d-2}^2 + F_{n+d-1}^2 +  F^2_{n+d} =  F^2_n + F_{2n+2d-3} +  F^2_{n+d},
\]
therefore,
\[
F_{2n-2d-2} = F_{2n+2d-1} - F_{2n+2d-3} < F^2_n + F^2_{n+d},
\]
which combined with our previous findings this yields
\[
F_{2n-2d-2} < F^2_n + F^2_{n+d} < F_{2n+2d-1},
\]
guaranteeing that $F^2_n + F^2_{n+d}$ do not belong to $(F_n)_n$.
\end{proof}

The following result is central to the proof of the main result in this section. It provides an alternative way to express a $k$-generalized Fibonacci number in terms of its preceding terms, allowing us to obtain some key inequalities.

\begin{lemma} \label{lemkbona} The identity 
    \[F_n^{(k)} = \sum_{j=0}^{k-1}F_{\ell-j}^{(k)} \left( \sum_{i=0}^{k-1-j}F_{n-\ell-i}^{(k)} \right),\]
holds for every  $\ell \in \Z$ such that  $1\leq \ell \leq n-1$, with $n \geq 1$. 
\end{lemma}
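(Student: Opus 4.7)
The plan is to prove the identity by induction on $\ell$ for fixed $n \geq 2$ (the hypothesis $1 \leq \ell \leq n-1$ forces $n \geq 2$). Denote the right-hand side by
\[T_\ell := \sum_{j=0}^{k-1}F_{\ell-j}^{(k)} \left( \sum_{i=0}^{k-1-j}F_{n-\ell-i}^{(k)} \right),\]
so that the goal is to establish $T_\ell = F_n^{(k)}$ for every admissible $\ell$.

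For the base case $\ell = 1$, the initial conditions of the $k$-generalized Fibonacci sequence give $F_{1-j}^{(k)} = 0$ whenever $1 \leq j \leq k - 1$ (since then $2 - k \leq 1 - j \leq 0$). Thus only the $j = 0$ summand of $T_1$ survives, and after invoking the defining recurrence $F_n^{(k)} = F_{n-1}^{(k)} + \cdots + F_{n-k}^{(k)}$ (which is valid for $n \geq 2$, including the small cases $n\le k$ where the tail terms vanish) one obtains $T_1 = F_n^{(k)}$ immediately. For the inductive step, assuming $T_\ell = F_n^{(k)}$ with $1 \leq \ell \leq n-2$, I would show that $T_{\ell+1} = T_\ell$ by a direct telescoping. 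Splitting off the $j=0$ term of $T_{\ell+1}$ and collapsing its inner sum by the recurrence (applicable because $n - \ell \geq 2$) produces the contribution $F_{\ell+1}^{(k)} F_{n-\ell}^{(k)}$. Reindexing the remaining outer terms via $j \mapsto j+1$ aligns them with the corresponding terms of $T_\ell$, but with the top summand $F_{n-\ell}^{(k)}$ missing from each inner sum. Regrouping, the difference $T_{\ell+1} - T_\ell$ factors as $F_{n-\ell}^{(k)}$ times
\[F_{\ell+1}^{(k)} - \sum_{i=1}^{k} F_{\ell+1-i}^{(k)},\]
which vanishes by the defining recurrence (applicable because $\ell + 1 \geq 2$). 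This closes the induction.

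The main obstacle is purely bookkeeping: one must carefully track the two index shifts, identify exactly which terms cancel in the re-indexed double sum, and verify that both invocations of the recurrence — one in the variable $n - \ell$ and the other in $\ell + 1$ — fall within the range $\geq 2$ where the recurrence is available. No deeper tools beyond the defining recurrence and the vanishing of the initial values $F_{2-k}^{(k)},\ldots,F_0^{(k)}$ are required.
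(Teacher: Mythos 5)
Your proof is correct and takes essentially the same route as the paper's: an induction on $\ell$ in which the extremal term is collapsed via the defining recurrence and the remaining double sum is re-indexed so that the difference telescopes to $F_{n-\ell}^{(k)}\bigl(F_{\ell+1}^{(k)}-\sum_{i=1}^{k}F_{\ell+1-i}^{(k)}\bigr)=0$. The only difference is the direction of the induction: the paper anchors at $\ell=n-1$ (where the inner sums degenerate because $F_{1-i}^{(k)}=0$ for $i\geq 1$) and descends, while you anchor at $\ell=1$ (where the outer factors degenerate) and ascend.
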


\begin{proof}
    We first verify the case $\ell=n-1$ and thus use the descent method to show that if the assertion holds for $1< \ell \leq n-1$, then it holds for $\ell-1$.
    
Since $F_{1-i}^{(k)}=0$ for every $i \geq 1$,
\[
    \sum_{j=0}^{k-1}F_{n-1-j}^{(k)}\left(\sum_{i=0}^{k-1-j}F_{1-i}^{(k)} \right) = F_{n-1}^{(k)} + \dots +F_{n-k}^{(k)} = F_n^{k},
\]
which proves the case $\ell=n-1$. 
Next we suppose the lemma holds for some $\ell$, with $1 < \ell \leq n-1$. Hence, 
\begin{align*}
    F_n^{(k)} &= F^{(k)}_\ell \left(\sum_{i=0}^{k-1}F^{(k)}_{n-\ell-i} \right) + \sum_{j=1}^{k-1}F^{(k)}_{\ell-j} \left( \sum_{i=0}^{k-1-j}F^{(k)}_{n-\ell-i} \right) \\
    &= F^{(k)}_\ell \cdot F_{n-\ell+1}^{(k)} + \sum_{j=1}^{k-1}F^{(k)}_{\ell-j} \left( \sum_{i=0}^{k-1-j}F^{(k)}_{n-\ell-i} \right) \\
    &=  \left( \sum_{j=1}^{k-1}F^{(k)}_{\ell-j} + F^{(k)}_{\ell-k}  \right) \cdot F^{(k)}_{n-\ell+1} +  \sum_{j=1}^{k-1}F^{(k)}_{\ell-j} \left( \sum_{i=0}^{k-1-j}F^{(k)}_{n-\ell-i} \right) \\
    &=  \sum_{j=1}^{k-1}F^{(k)}_{\ell-j}  \left( F^{(k)}_{n-\ell+1}  + \sum_{i=1}^{k-j}F^{(k)}_{n-(\ell-1)-i}    \right) + F^{(k)}_{\ell-k} \cdot F^{(k)}_{n-\ell+1} \\
    &= \sum_{j=1}^{k-1}F^{(k)}_{\ell-j}  \left( \sum_{i=0}^{k-j}F^{(k)}_{n-(\ell-1)-i}    \right) + F^{(k)}_{\ell-k} \cdot F^{(k)}_{n-\ell+1}  \\
    &= \sum_{j=0}^{k-2}F^{(k)}_{(\ell-1)-j}  \left( \sum_{i=0}^{k-1-j}F^{(k)}_{n-(\ell-1)-i}    \right) + F^{(k)}_{(\ell-1)-(k-1)} \cdot F^{(k)}_{n-(\ell-1)},
\end{align*}
and at last,
\[
F_n^{(k)} = \sum_{j=0}^{k-1}F^{(k)}_{(\ell-1)-j}  \left( \sum_{i=0}^{k-1-j}F^{(k)}_{n-(\ell-1)-i} \right),
\]
which gives the desired identity.
\end{proof}

Next we deal with the Diophantine equation \eqref{eq-firstgoal} over the $k$-generalized Fibonacci sequence, generalizing \cite[Thm. 1.1]{chaves-marques-14}. In \cite{gomez-luca-14}, the authors consider the following theorem for $d=1$ and higher powers. 

\begin{thm} \label{thm-main1} Let $n\geq 1$ and $k \geq 3$. The Diophantine equation
\begin{equation} \label{kfibquad}
    (F_{n}^{(k)})^2 + (F_{n+d}^{(k)})^2 = F_{m}^{(k)}
\end{equation} 
has solution if, and only if, either $(n,d,m) = (1,0,3)$, or $(n,d,m) = (1,1,3)$ or $(n,d,m) = (a,0,2a-1)$ for $2 \leq a \leq \lfloor \frac{k+2}{2}\rfloor$. 
\end{thm}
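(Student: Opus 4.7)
The plan is to partition the analysis by the size of $m$ relative to $k$: the \emph{small regime} $m\leq k+1$, where Lemma \ref{kfibbasic}(iii) forces $\F_m=2^{m-2}$ (for $m\geq 2$, with $\F_1=1$), and the \emph{large regime} $m\geq k+2$, which is treated by linear forms in logarithms.

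In the small regime, the bound $(\F_{n+d})^2\leq \F_m\leq 2^{k-1}$ combined with $\F_{k+2}=2^k-1>2^{k-1}$ and the monotonicity of $(\F_j)_j$ forces $n+d\leq k+1$, so $\F_n$ and $\F_{n+d}$ are themselves powers of $2$ (or equal to $1$). For $n=1$ the equation reads $1+(\F_{1+d})^2=\F_m$: direct inspection gives $(1,0,3)$ and $(1,1,3)$, while for $2\leq d\leq k$ the left-hand side $1+4^{d-1}$ is odd and at least $5$, so it cannot match any power of $2$. For $n\geq 2$ the equation becomes $2^{2n-4}(1+4^d)=2^{m-2}$. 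If $d=0$, this gives $m=2n-1$, valid whenever $2n-1\leq k+1$, producing the family $(a,0,2a-1)$ for $2\leq a\leq\lfloor(k+2)/2\rfloor$. If $d\geq 1$, matching $2$-adic valuations forces $m=2n-2$, hence $1+4^d=2$, which is impossible.

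In the large regime, apply the Dresden--Du analogue of Binet's formula (alluded to in Lemma \ref{kfibbasic}(iv)):
\[
\F_j=g(\alpha)\,\alpha^{j-1}+E_j,\qquad g(\alpha):=\frac{\alpha-1}{2+(k+1)(\alpha-2)},\qquad |E_j|<\tfrac{1}{2},
\]
where $\alpha=\alpha_1$. Substituting into $(\F_n)^2+(\F_{n+d})^2=\F_m$, dividing by $g(\alpha)\,\alpha^{m-1}$, and writing $1+\alpha^{2d}=\alpha^{2d}(1+\alpha^{-2d})$, standard estimates yield
\[
\bigl|\,g(\alpha)\,\alpha^{2n+2d-m-1}-1\,\bigr|<C_k\cdot \alpha^{-\min(n,d)},
\]
for an effectively computable $C_k=C_k(g(\alpha))$. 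The left-hand side is nonzero, for otherwise $g(\alpha)$ would be a rational power of $\alpha$, contradicting the irreducibility of $f_k$. Applying Lemma \ref{lemma-matveev} in the degree-$k$ field $\Q(\alpha)$ with $\gamma_1=\alpha$, $\gamma_2=g(\alpha)$, $b_1=2n+2d-m-1$, $b_2=1$ then gives a lower bound $(eB)^{-\lambda_k}$ with $B=|2n+2d-m-1|+1$ and $\lambda_k$ polynomial in $\log k$; combined with the approximate relation $m\approx 2n+2d-1+\log_\alpha g(\alpha)$, this produces an effective bound $\max(n,d)\leq N_0(k)$.

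The hard part is that $N_0(k)$ from Matveev is astronomically large, so a brute-force search is infeasible. To overcome this, recast the linear form as an inequality of the shape $0<n\gamma-m+\mu<AB^{-n}$, with $\gamma$ built from $\log\alpha$ and $\log g(\alpha)$, and invoke Lemma \ref{lemma-dujella} using a continued-fraction convergent $p/q$ of $\gamma$ with $q>6N_0(k)$ and $\varepsilon=\|\mu q\|-N_0(k)\|\gamma q\|>0$; this collapses $N_0(k)$ to a small explicit range. The remaining finite set of candidates is then verified computationally for each relevant $k$, as described in the appendix. Combining the two regimes yields exactly the list of solutions asserted in the theorem.
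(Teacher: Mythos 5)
Your small regime ($m\leq k+1$) is essentially sound and matches the paper's handling of the power-of-two cases via Lemma \ref{kfibbasic}(iii): it correctly produces $(1,0,3)$, $(1,1,3)$ and the family $(a,0,2a-1)$, $2\leq a\leq\lfloor(k+2)/2\rfloor$. The problem is the large regime. Your plan is to bound $\max(n,d)$ by $N_0(k)$ via Matveev in the degree-$k$ field $\Q(\alpha)$, then collapse the bound with Dujella--Peth\H{o} and finish ``computationally for each relevant $k$.'' But the theorem is asserted for \emph{every} $k\geq 3$, and nothing in your argument bounds $k$: the algebraic numbers $\alpha$ and $g(\alpha)$, the Matveev constant $\lambda_k$, the real number $\gamma$ whose continued fraction you must expand, and the final finite search all depend on $k$. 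A reduction-plus-computation scheme that must be rerun for infinitely many values of $k$ is not a proof. (There are also local errors: after squaring, the linear form must carry the factor $1+\alpha^{-2d}$ as a third algebraic number whose height grows with $d$ --- you cannot absorb it into an error term $\alpha^{-\min(n,d)}$ when $d$ is small --- and the nonvanishing claim via ``irreducibility of $f_k$'' is unjustified as stated.)

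The paper avoids all of this. Its key tool is the purely combinatorial identity of Lemma \ref{lemkbona},
\[
\F_N=\sum_{j=0}^{k-1}\F_{\ell-j}\Bigl(\sum_{i=0}^{k-1-j}\F_{N-\ell-i}\Bigr),
\]
applied with $N=2n+2d-1$ and $N=2n+2d-2$ at $\ell=n+d$. This squeezes the sum strictly between consecutive terms,
\[
\F_{2n+2d-2}<(\F_n)^2+(\F_{n+d})^2<\F_{2n+2d-1}\qquad(d\geq 1,\ n+d>3),
\]
and similarly shows $\F_{2n-1}<2(\F_n)^2<\F_{2n}$ in the remaining $d=0$ cases, with the exponent $m$ first pinned to $\{2n-1,2n,2n+1\}$ by Lemma \ref{kfibbasic}(i). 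This is uniform in $k$ and requires no linear forms in logarithms, no continued fractions, and no computation. You should replace your large-regime argument by an estimate of this kind; as proposed, the proof cannot be completed.
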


\begin{proof}
First, let $d=0$. Thus \eqref{kfibquad} yields the following equation
\[
2(F_{n}^{(k)})^2 = F_{m}^{(k)}.
\]
If $n=1$, then  $(n,d,m)=(1,0,3)$ is a solution for \eqref{kfibquad}. If $2 \leq n \leq \lfloor \frac{k+2}{2} \rfloor$, then $3 \leq 2n-1 \leq k+1$. Thus, follows from Lemma \ref{kfibbasic} that $\F_n = 2^{n-2}$ and, for $m= 2n-1$, that 
 $\F_m = 2^{m-2}$. Hence, we have the solutions for \eqref{kfibquad} given by  
 $(n,d,m) =(a,0,2a-1)$, for $2 \leq a \leq \lfloor \frac{k+2}{2} \rfloor$. Next, suppose that $n \geq k+2$. By Lemma \ref{kfibbasic}, we can determine the possible values for $m$ in terms  of $n$. We first observe that
\[ 
   \alpha^{2n-3} < 2(F_{n}^{(k)})^2 = F_{m}^{(k)} \leq  \alpha^{m-1}, \ \]
   from where $ m \geq 2n-1$. Moreover, \\ 
 \[   \alpha^{2n} > 2(F_{n}^{(k)})^2 = F_{m}^{(k)} \geq  \alpha^{m-2}, \] 
 which implies $ m \leq 2n+1$. Therefore, $m \in \{2n-1, 2n, 2n+1\}$. Since $\F_{k+2} = 2^k-1$ is odd, it provides no solution. We now consider $m \geq k+3$, which implies $ n \geq (k+2)/2 > 2$. 
Replacing $n$ by $2n$ and letting $\ell = n$, Lemma ~\ref{lemkbona} yields the following inequality
\begin{align*}
\F_{2n} & =  \sum_{j=0}^{k-1}F_{n-j}^{(k)} \left( \sum_{i=0}^{k-1-j}F_{n-i}^{(k)} \right) \\
        & = F_{n}^{(k)} \left( \sum_{i=0}^{k-1}F_{n-i}^{(k)} \right) + \sum_{j=1}^{k-1}F_{n-j}^{(k)} F_{n}^{(k)} + \sum_{j=1}^{k-2}F_{n-j}^{(k)}  \left( \sum_{i=1}^{k-1-j}F_{n-i}^{(k)} \right) \\
        & \geq F_{n}^{(k)}F_{n+1}^{(k)} + F_{n}^{(k)}(F_{n}^{(k)} - F_{n-k}^{(k)}) + (\F_{n-1})^2 \\
        & > F_{n}^{(k)}(2F_{n}^{(k)} + F_{n-1}^{(k)} + \cdots + F_{n+1-k}^{(k)} - F_{n-k}^{(k)}) \\
        & \geq 2(\F_{n})^2. \
\end{align*}
Therefore $\F_{2n+1} \geq \F_{2n} > 2(\F_{n})^2$, and there are no solutions for $m=2n$ or $2n+1$. For $m=2n-1$, again Lemma ~\ref{lemkbona} for $\ell = n$ yields 

\begin{align*}
\F_{2n-1} & =  \sum_{j=0}^{k-1}F_{n-j}^{(k)} \left( \sum_{i=0}^{k-1-j}F_{n-1-i}^{(k)} \right) \\ 
        & = F_{n}^{(k)} \left( \sum_{i=0}^{k-1}F_{n-1-i}^{(k)} \right) + \sum_{j=1}^{k-1}F_{n-j}^{(k)} \left( \sum_{i=0}^{k-1-j}F_{n-1-i}^{(k)} \right) \\
        & < (F_{n}^{(k)})^2 +  \left(\sum_{j=1}^{k-1}F_{n-j}^{(k)}\right) \left( \sum_{i=0}^{k-2}F_{n-1-i}^{(k)} \right)  \\
        & \leq  2(F_{n}^{(k)})^2.
\end{align*}
Thus $\F_{2n-1} < 2(F_{n}^{(k)})^2$ and we also have no solutions in this case. 

Next, let $d \geq 1$. We observe that $(n,d)=(1,1)$ provides a solution to \eqref{kfibquad} for all $k \geq 3$. Furthermore,  when $(n,d)=(2,1)$ or $(1,2)$, \eqref{kfibquad} only has solution for $k=2$.  We are left with the case $n+d>3$. Replacing $n$ by $2n+2d-1$ and letting $\ell = n+d$ in the Lemma \ref{lemkbona}, 
\[
    F_{2n+2d-1}^{(k)}  = \F_{n+d}\left( \sum_{i=0}^{k-1}\F_{n+d-1-i} \right) + \sum_{j=1}^{k-1}F_{n+d-j}^{(k)} \left( \sum_{i=0}^{k-1-j}\F_{n+d-1-i} \right) > (F_{n+d}^{(k)})^2+(F_{n+d-1}^{(k)})^2. \]
Hence,    
    \begin{equation} \label{desigkfibquad1}
        F_{2n+2d-1}^{(k)} > (F_n^{(k)})^2+(F_{n+d}^{(k)})^2 .
    \end{equation}
Since 
\[\sum_{j=2}^{k-1}F_{n+d-j}^{(k)} \left( \sum_{i=0}^{k-1-j}F_{n+d-2-i}^{(k)} \right)<\left(\sum_{j=0}^{k-2}F_{m+d-2-j}^{(k)}\right)\left(\sum_{j=0}^{k-2}F_{m+d-2-j}^{(k)}\right),\]
adding $ F_{m+d-1}^{(k)} \left( \sum_{i=0}^{k-2}F_{n+d-2-i}^{(k)} \right)$ on both sides of the inequality, yields 
\[
\sum_{j=1}^{k-1}F_{n+d-j}^{(k)}\left(\sum_{i=0}^{k-1-j}F_{n+d-2-i}^{(k)}\right) < \left(\sum_{j=0}^{k-1}F_{n+d-1-j}^{(k)}\right)\left(\sum_{j=0}^{k-2}F_{n+d-2-j}^{(k)}\right) 
 = \F_{n+d}\left(\F_{n+d} - \F_{n+d-1}\right).
\]
Thus, replacing $n$ by $2n+2d-2$ and letting $\ell = n+d$ in the Lemma~\ref{lemkbona}, 
\begin{equation} \label{desigkfibquad2} F_{2n+2d-2}^{(k)} = F_{n+d}^{(k)}F_{n+d-1}^{(k)}+\sum_{j=1}^{k-1}F_{n+d-j} \left(\sum_{i=0}^{k-1-j}F_{n+d-2-i}^{(k)} \right)< (F_{n}^{(k)})^2 + (F_{n+d}^{(k)})^2 . 
\end{equation}
Combining \eqref{desigkfibquad1} and \eqref{desigkfibquad2} yields
\[ F_{2n+2d-2}^{(k)} < (F_{n}^{(k)})^2+(F_{n+d}^{(k)})^2 < F_{2n+2d-1}^{(k)}.
\]
Therefore, for $d \geq 1$ and $n+d>3$,  the sum of squares of two $k$-generalized Fibonacci numbers falls into two consecutive terms of the sequence, ensuring no solutions in this remaining case.   
\end{proof}


\section{Sum of two $s$-powers of Fibonacci numbers}

Let $s > 2$. The aim of this section is to solve completely the following Dipohantine equation
\begin{equation} \label{eq-spowers-sec3}
    F_n^s+F_{n+d}^s=F_m 
\end{equation}
This means to generalize Theorem \ref{propfibquad} for higher powers.  To achieve our goal, we need the following lemmas.

 \begin{lemma} Let $A : = \{(2,1)\}\cup\{(1,d) \ \big| \ d \in \mathbb{N}\}$. If $(s,d) \in \mathbb{N}^2 \setminus A$,  then
\[1+\alpha^{sd} - \alpha^{n} \sqrt{5}^{s-1} \neq 0\]
for every $n\in \mathbb{Z}$.
 \end{lemma}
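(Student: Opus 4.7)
The plan is to suppose the equation $1 + \alpha^{sd} = \alpha^n \sqrt{5}^{s-1}$ holds for some $(s,d) \in \mathbb{N}^2 \setminus A$ and some $n \in \mathbb{Z}$, and to force $(s,d)$ back into $A$, yielding a contradiction. The first step is to apply the nontrivial Galois automorphism of $\mathbb{Q}(\sqrt{5})/\mathbb{Q}$ (sending $\alpha \mapsto \beta$ and $\sqrt{5} \mapsto -\sqrt{5}$), multiply the resulting conjugate identity with the original, and use $\alpha\beta = -1$. This produces the norm equation
\[
1 + L_{sd} + (-1)^{sd} \;=\; (-1)^{n+s-1}\, 5^{s-1}.
\]
Positivity of the left-hand side forces the right-hand sign to be positive, and splits the analysis according to the parity of $sd$: either $L_{sd} = 5^{s-1}$ with $sd$ odd, or $L_{sd} + 2 = 5^{s-1}$ with $sd$ even.

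The odd subcase is immediate: since $L_m \pmod{5}$ is periodic with values $2,1,3,4$, we have $5 \nmid L_m$ for every $m$, so $s$ must equal $1$; then $L_d = 1$ forces $d = 1$, putting $(s,d)$ in $A$. In the even subcase, the congruence $L_{sd} \equiv 3 \pmod 5$ forces $sd \equiv 2 \pmod 4$, so $sd = 2m$ with $m$ odd. The key step is then the identity
\[
1 + \alpha^{2m} \;=\; \alpha^m\, \sqrt{5}\, F_m \qquad (m \text{ odd}),
\]
which follows at once from Binet's formula together with $\beta^m = -1/\alpha^m$ (valid since $m$ is odd). Substituting collapses the original equation to
\[
F_m \;=\; \alpha^{n-m}\, \sqrt{5}^{\,s-2}.
\]

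I then split according to the parity of $s$, using the standard representation $\alpha^{n-m} = (P + Q\sqrt{5})/2$ in which $P = \pm L_{|n-m|}$ is always nonzero (Lucas numbers never vanish) while $Q = \pm F_{|n-m|}$ vanishes precisely when $n = m$. For $s$ odd with $s \geq 3$, the factor $\sqrt{5}^{s-2}$ contributes an extra $\sqrt{5}$, and equating coefficients of $\sqrt{5}$ on both sides of $F_m = \alpha^{n-m}\sqrt{5}^{s-2}$ forces $P = 0$, which is impossible. For $s = 2$, the equation $F_m = \alpha^{n-m}$ forces $n = m$ and $F_m = 1$; with $m$ odd this gives $m = 1$, so $(s,d) = (2,1) \in A$. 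For $s \geq 4$ even, rationality again forces $n = m$ and reduces everything to $F_m = 5^{(s-2)/2}$ with $m$ odd. The case $s = 4$ would correspond to the genuine Fibonacci solution $F_5 = 5$, but then $sd = 10$ while $s = 4$ demands $d = 5/2 \notin \mathbb{N}$. For $s \geq 6$ even, the standard identity $v_5(F_m) = v_5(m)$ yields $5^{(s-2)/2} \mid m$, hence $m \geq 5^{(s-2)/2}$; combined with the lower bound $F_m \geq \alpha^{m-2}$ from Lemma \ref{kfibbasic}(i), this gives $\alpha^{5^{(s-2)/2}-2} \leq 5^{(s-2)/2}$, which fails by comparing exponential against linear growth already at $(s-2)/2 = 2$.

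The main obstacle I expect is spotting the identity $1 + \alpha^{2m} = \alpha^m\sqrt{5}\,F_m$ for odd $m$; without it the equation retains a transcendental flavor and would plausibly require Baker-type lower bounds for linear forms in logarithms. A secondary subtlety is the borderline case $s = 4$, where a bona fide Fibonacci solution $F_5 = 5$ appears and must be discarded not by any Fibonacci-theoretic obstruction but by the integrality constraint $s \mid 2m$ built into the formulation.
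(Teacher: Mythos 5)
Your proof is correct, and its first half coincides with the paper's: both take the norm from $\mathbb{Q}(\sqrt{5})$ to $\mathbb{Q}$ to obtain $L_{sd}+1+(-1)^{sd}=\pm 5^{s-1}$, use positivity to fix the sign, and dispose of the case $sd$ odd via $5\nmid L_{sd}$. Where you diverge is the case $sd$ even. The paper exploits the coincidence $N(\gamma)=\mathrm{Tr}(\gamma)$ for $sd$ even, takes the trace of the original identity, and lands on $F_n=\sqrt{5}^{\,s-2}$ (with $n$ the exponent from the statement), which it kills by citing Carmichael's theorem that the only Fibonacci powers of $5$ are $1$ and $5$, discarding $s=4$ exactly as you do via $d=5/2\notin\mathbb{N}$. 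You instead refine $L_{sd}\equiv 3\pmod 5$ to $sd\equiv 2\pmod 4$ (implicitly assuming $s\geq 2$, which is harmless since $s=1$ already puts $(s,d)\in A$), invoke the identity $1+\alpha^{2m}=\alpha^m\sqrt{5}\,F_m$ for odd $m$, and reduce to $F_m=\alpha^{n-m}\sqrt{5}^{\,s-2}$, finishing by rationality of powers of $\alpha$ ($L_k\neq 0$, and $F_k=0$ only for $k=0$) and by replacing Carmichael with the valuation identity $v_5(F_m)=v_5(m)$ plus the growth bound $F_m\geq\alpha^{m-2}$. Your route buys a slightly more self-contained treatment of the "Fibonacci power of $5$" step and extracts the extra structural fact $sd\equiv 2\pmod 4$; the paper's trace trick is shorter once one notices that the determinant of the multiplication matrix equals its trace. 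Both are sound.
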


\begin{proof} Let $(s,d) \in \mathbb{N}^2$ be such that
\begin{equation}\label{eq3.1-lemma3.1}
     1+\alpha^{sd}= \alpha^{n} \sqrt{5}^{s-1} 
\end{equation}
for some $n \in \mathbb{Z}$.

Let $\gamma := 1+\alpha^{sd} \in \Q(\sqrt{5})$ and consider the linear operator
     $m_{\gamma} :\mathbb{Q}(\sqrt{5}) \rightarrow  \mathbb{Q}(\sqrt{5})$ given by the multiplication for $\gamma$. 
The matrix of $m_{\gamma}$ in the basis $\{1, \alpha\}$ is given by 
\[\begin{pmatrix}
 F_{sd-1}+1 & F_{sd} \\
 F_{sd} & F_{sd+1}+1\\
 \end{pmatrix}.\]
Let $N_{\mathbb{Q}(\sqrt{5})}$ be the norm map relative to the field extension $\mathbb{Q}(\sqrt{5})/\Q$. The above discussion yields 
\[ N_{\mathbb{Q}(\sqrt{5})}(\gamma)= F_{sd-1}  F_{sd+1} + F_{sd-1} + F_{sd+1} +1 -F_{sd}^2 .\]
Thus, the Cassini identity $F_{n+1}F_{n-1}-F_n^2=(-1)^n$ implies
\[N_{\mathbb{Q}(\sqrt{5})}(\gamma)=(-1)^{sd}+F_{sd+1}+F_{sd-1}+1.\]
Since $N_{\mathbb{Q}(\sqrt{5})}(\alpha)=-1$ and $N_{\mathbb{Q}(\sqrt{5})}(\sqrt 5)=-5$, applying $N_{\mathbb{Q}(\sqrt{5})}$ to the identity (\ref{eq3.1-lemma3.1}) yields
 \[N_{\mathbb{Q}(\sqrt{5})}(\gamma)= (-1)^{sd}+F_{sd+1}+F_{sd-1}+1 = (-5)^{s-1}(-1)^n.\]
The left side of above identity is always positive for $sd \ge 2$, we thus conclude that $n$ is even and 
\begin{equation} \label{eq3.2-lemma3.1}
  N_{\mathbb{Q}(\sqrt{5})}(\gamma)=  (-1)^{sd}+F_{sd+1}+F_{sd-1}+1 = 5^{s-1}.
\end{equation}

Next we split the proof in three cases. We first suppose that $sd$ is odd. In this case, the identity (\ref{eq3.2-lemma3.1}) becomes 
 \[F_{s(d+1)}+F_{s(d-1)}=5^{s-1}.\]
Observe that $F_{n+1}+F_{n-1}$ stands for the $n$-th Lucas number. Hence  $5 \nmid F_{n+1}+F_{n-1}$ for all $n \in \mathbb{N}$ and, thus, $s=1$.

If $sd$ is even, we observe that the determinant of the matrix of the operator $m_{\gamma}$ equals its trace. Let $Tr_{\mathbb{Q}(\sqrt{5})}$ stand for the trace map relative to the field extension $\Q(\sqrt{5})/\Q$. If $s$ is odd, applying 
$Tr_{\mathbb{Q}(\sqrt{5})}$ to the identity (\ref{eq3.1-lemma3.1}) yields 
\[ Tr_{\mathbb{Q}(\sqrt{5})}(1+\alpha^{sd})=
\sqrt{5}^{s-1} Tr_{\mathbb{Q}(\sqrt{5})}(\alpha^n)=\sqrt{5}^{s-1}(F_{n+1}+F_{n-1})\]
Hence in the identity (\ref{eq3.2-lemma3.1}) we have 
\[ 5^{s-1}= N_{\mathbb{Q}(\sqrt{5})}(\gamma)=Tr_{\mathbb{Q}(\sqrt{5})}(\gamma)=\sqrt{5}^{s-1}(F_{n+1}+F_{n-1}).\]
which implies $F_{n+1}+F_{n-1}=\sqrt{5}^{s-1}.$ Then $s=1$.

We are  left to the case $sd$ and $s$ are even. Applying 
$Tr_{\mathbb{Q}(\sqrt{5})}$ to the identity (\ref{eq3.1-lemma3.1}) yields
\[ Tr_{\mathbb{Q}(\sqrt{5})}(1+\alpha^{sd})=\sqrt{5}^{s-2} Tr_{\mathbb{Q}(\sqrt{5})}(\sqrt{5}.\alpha^n)=\sqrt{5}^{s} F_n\]
Since $N_{\mathbb{Q}(\sqrt{5})}(\gamma)=Tr_{\mathbb{Q}(\sqrt{5})}(\gamma)$, 
in identity (\ref{eq3.2-lemma3.1}) we have $5^{s-1}=\sqrt{5}^{s}F_n$. Thus, $F_n =\sqrt{5}^{s-2}$.

By the Carmichael Theorem the only powers of $5$ in the Fibonacci sequence are $F_1=1$ and $F_5=5$. Hence either $s=2$ and $n=1$ or $s=4$ and $n=5$. But only the first case is a solution for $(\ref{eq3.1-lemma3.1})$, where we must have $d=1.$ 
\end{proof}

Although it is well-known that irrational numbers are, in a certain sense, well approximated by rational numbers, the following result explores the opposite direction: what is a ``safe'' distance of a given irrational number (namely, $\sqrt{5}$) from any rational number?


 \begin{lemma} \label{lema3.2} Let $p,q \in \mathbb{Z}$ with $ p \geq 0$ and $q > 0$. Then
\begin{equation} \label{deslema3.2}
    \big|q\sqrt{5} - p\big| \ge \frac{1}{6q} .
\end{equation}
\end{lemma}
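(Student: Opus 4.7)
The plan is to exploit the classical conjugation identity
\[
\bigl|q\sqrt{5}-p\bigr|\cdot\bigl|q\sqrt{5}+p\bigr| \;=\; \bigl|5q^2-p^2\bigr|,
\]
which reduces the problem to a lower bound on the (integer) numerator and an upper bound on the denominator. Since $\sqrt{5}$ is irrational, $5q^2-p^2$ is a nonzero integer, hence $|5q^2-p^2|\geq 1$. The work therefore reduces to showing $|q\sqrt{5}+p|\leq 6q$.

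To bound the denominator, I would first dispense with a trivial case: if $|q\sqrt{5}-p|\geq 1$, then certainly $|q\sqrt{5}-p|\geq 1/(6q)$ since $q\geq 1$. Otherwise $|q\sqrt{5}-p|<1$, which forces $p<q\sqrt{5}+1\leq q(\sqrt{5}+1)$ (using $q\geq 1$). Hence
\[
q\sqrt{5}+p \;<\; q(2\sqrt{5}+1) \;<\; 6q,
\]
the last step following from the numerical inequality $2\sqrt{5}+1<6$. Plugging this back into the conjugation identity yields
\[
\bigl|q\sqrt{5}-p\bigr| \;=\; \frac{\bigl|5q^2-p^2\bigr|}{\bigl|q\sqrt{5}+p\bigr|} \;\geq\; \frac{1}{6q},
\]
which is the desired inequality.

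There is essentially no serious obstacle here; the only point of care is the case split, which is needed so that we may convert the hypothesis $|q\sqrt{5}-p|<1$ into a usable upper bound on $p$ in terms of $q$. The constant $6$ is not optimal (one could sharpen to roughly $2\sqrt{5}+1$ with more care), but $6$ suffices for the applications later in the paper and makes the argument clean.
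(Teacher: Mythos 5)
Your proof is correct and follows essentially the same route as the paper's: both arguments rest on the conjugation identity $|q\sqrt{5}-p|\,|q\sqrt{5}+p|=|5q^2-p^2|\ge 1$ and then bound the denominator by $6q$. The only cosmetic difference is how the bound $p=O(q)$ is obtained --- you split on whether $|q\sqrt{5}-p|\ge 1$, while the paper restricts to $p\in\{\lfloor q\sqrt{5}\rfloor,\lceil q\sqrt{5}\rceil\}$ --- and both versions are sound.
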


\begin{proof} We start by pointing out that,
    \[\big|q\sqrt{5} -p\big|=\frac{\big|5q^2-p^2\big|}{q\sqrt{5} +p} \geq \frac{1}{q\sqrt{5} +p} ,\]    
 where the last inequality holds, since $\sqrt{5}$ is not a rational number. Notice that for a fixed $q$, two possible values for $p$ minimizes the LHS of \eqref{deslema3.2}: $p=\lfloor q\sqrt{5} \rfloor$ or $\lceil q \sqrt{5}\rceil$. Since $\max \{ \lfloor q\sqrt{5} \rfloor , \lceil q \sqrt{5}\rceil \} \leq 3q$, and by the previous inequality we have
 \begin{align*}
\big|q\sqrt{5}-p\big|  & \ge \min \left\{ \big|q\sqrt{5}-\lfloor q\sqrt{5}         \rfloor \big|, \big|q\sqrt{5}-\lceil q\sqrt{5} \rceil\big| \right\} \\
                & \ge \min \left\{\frac{1}{q\sqrt{5}+\lfloor q\sqrt{5}\rfloor} , \frac{1}{q\sqrt{5}+\lceil q\sqrt{5}\rceil} \right\} \\
                & = \frac{1}{q\sqrt{5}+\lceil q\sqrt{5}\rceil} \ge \frac{1}{q(\sqrt{5}+3)}>\frac{1}{6q} \ .  
\end{align*}
Thus the lemma is proved.
\end{proof}


The next lemma is also related to a lower bound for the distance between two given numbers, in this case a power of the golden ratio to a power of $\sqrt{5}$.

\begin{lemma} \label{lemma-alphabetalow} Let $s, n \in \mathbb{N}$, and $ s \ne 2,4$. Then, 
\[\big|\alpha^n-\sqrt{5}^{s-1}\big| \ge 1-\big|\beta\big|^n,\]
where $\alpha = (1+\sqrt{5})/2$ and $\beta = -\alpha^{-1}$.
\end{lemma}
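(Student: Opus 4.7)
The plan is to split the proof according to the parity of $s$, and in each case rewrite $\alpha^n - \sqrt{5}^{\,s-1}$ as a nonzero integer (times $\sqrt{5}$ in the even case) plus or minus the small quantity $\beta^n$. A reverse triangle inequality will then deliver the desired bound.

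First, suppose $s$ is odd with $s \geq 3$, and set $N := \sqrt{5}^{\,s-1} = 5^{(s-1)/2} \in \mathbb{Z}$. From the two identities in Lemma~\ref{kfibbasic}(iv) one immediately obtains the well-known identity $\alpha^n + \beta^n = L_n$, where $L_n := F_{n-1}+F_{n+1}$ is the $n$-th Lucas number. Substituting gives
\[
\alpha^n - N = (L_n - N) - \beta^n .
\]
A direct recursion shows that $(L_n \bmod 5)_{n\geq 0}$ cycles through $2,1,3,4$ with period $4$, so $5 \nmid L_n$ for any $n$; in particular $L_n \neq N$ and $|L_n - N| \geq 1$. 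The reverse triangle inequality then yields
\[
|\alpha^n - N| \;\geq\; |L_n - N| - |\beta|^n \;\geq\; 1 - |\beta|^n .
\]

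Next, suppose $s$ is even. Since $s \neq 2, 4$ we may write $s = 2t$ with $t \geq 3$. Binet's formula $\alpha^n = \sqrt{5}\,F_n + \beta^n$ (Lemma~\ref{kfibbasic}(iv)) gives
\[
\alpha^n - \sqrt{5}^{\,s-1} \;=\; \sqrt{5}\,\bigl(F_n - 5^{t-1}\bigr) + \beta^n .
\]
By Carmichael's theorem the only powers of $5$ in the Fibonacci sequence are $F_1 = F_2 = 1$ and $F_5 = 5$, and since $5^{t-1} \geq 25$ for $t \geq 3$ we must have $F_n \neq 5^{t-1}$; hence $|F_n - 5^{t-1}| \geq 1$. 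The reverse triangle inequality then gives
\[
|\alpha^n - \sqrt{5}^{\,s-1}| \;\geq\; \sqrt{5}\,|F_n - 5^{t-1}| - |\beta|^n \;\geq\; \sqrt{5} - |\beta|^n \;\geq\; 1 - |\beta|^n .
\]

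The only remaining case is $s = 1$: using $|\beta| = \alpha^{-1}$ and $\alpha^n > 1$ for $n \geq 1$, the claim reduces to $\alpha^n + \alpha^{-n} \geq 2$, which is immediate from AM-GM. The main conceptual observation is that the hypothesis $s \neq 2, 4$ is exactly tailored to exclude the two values $F_2 = 1$ and $F_5 = 5$ from Carmichael's list, which would otherwise correspond to $F_n = 5^{(s-2)/2}$ for $s=2$ and $s=4$ respectively and break the even-case argument; the odd case needs no analogous restriction, precisely because no Lucas number is divisible by $5$.
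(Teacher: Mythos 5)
Your proof is correct and follows essentially the same route as the paper: the reverse triangle inequality applied to $L_n$ (via $\alpha^n+\beta^n=L_n$ and $5\nmid L_n$) for odd $s$, and to $F_n$ (via Binet and Carmichael's theorem) for even $s$. Your separate treatment of $s=1$ by AM--GM is a small but welcome addition, since for $s=1$ the quantity $\sqrt{5}^{\,s-1}=1$ is not a multiple of $5$ and the paper's ``$5\nmid L_n$'' argument does not literally cover the case $L_1=1$.
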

\begin{proof} First, for an odd $s$, we have
\[\big|\alpha^n-(\sqrt{5})^{s-1}\big|=\big|(\alpha^n+\beta^n)-(\sqrt{5})^{s-1}-\beta^n\big|\ge \big|L_n-(\sqrt{5})^{s-1}\big|-\big|\beta\big|^n ,\]
and since $5 \nmid L_n$ for all $n \in \mathbb{N}$, then $\big|L_n-\sqrt{5}^{s-1}\big| \ge 1$, which yields
\[ \big|\alpha^n-(\sqrt{5})^{s-1}\big| \ge 1-\big|\beta\big|^n . \]
For the case where $s$ is even, an aditional algebraic manipulation yields
\begin{align*}
    \big|\alpha^x-(\sqrt{5})^{s-1}\big| & =\sqrt{5}  \;\left|\frac{\alpha^n-\beta^n+\beta^n}{\sqrt{5}}-(\sqrt{5})^{s-2}\right| \\
    & \ge \sqrt{5} \; \big|F_n-\sqrt{5}^{s-2}\big|-\big|\beta\big|^n.
\end{align*}
Moreover, since $s \neq 2,4$, and the only Fibonacci numbers that are powers of $5$ are $F_1=F_2=1$ and $F_5=5$, we obtain $\big|F_n-(\sqrt{5})^{s-2}\big|\ge 1$. Hence
\[\big|\alpha^n-(\sqrt{5})^{s-1}\big| \ge \sqrt{5}-\big|\beta\big|^n>1-\big|\beta\big|^n ,\]
and we are done.
\end{proof}

Let us consider the extension of the Fibonacci sequence for negative index. Namely, we define for $n \in \Z$,  $F_{-n} :=(-1)^{n+1} F_n$. We observe that the extended Fibonacci sequence still satisfy $F_{n+1}=F_n+F_{n-1}$, but now for every $n \in \mathbb{Z}$. The following lemma generalizes Lemma \ref{kfibbasic} item (iv).  


\begin{lemma} \label{lema3.5} Let $(F_n)_{n \in \Z}$ be the extended Fibonacci sequence. Then the following identity
\[\alpha^n=\alpha F_n+F_{n-1}\]
hods for every $n \in \mathbb{Z}$. 
\end{lemma}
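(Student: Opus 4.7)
The plan is to prove the identity $\alpha^n = \alpha F_n + F_{n-1}$ for all $n \in \Z$ by showing that both sides satisfy the same second-order linear recurrence and then matching two consecutive initial values. The case $n \geq 0$ is already provided by Lemma \ref{kfibbasic}, item (iv), so the real content is extending this to negative indices using the definition $F_{-n} := (-1)^{n+1} F_n$.

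First I would verify that the extended Fibonacci sequence $(F_n)_{n\in\Z}$ still satisfies $F_{n+1} = F_n + F_{n-1}$ for every $n \in \Z$. For $n \geq 1$ this is the standard recurrence; for $n \leq 0$ it follows from a short computation: writing $m = -n$ and substituting the formula $F_{-m} = (-1)^{m+1} F_m$, both sides reduce (after a sign bookkeeping) to $(-1)^{m} F_{m-1}$. This step is routine but essential, and it is really the only place where the extended definition is used.

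Once the recurrence is established for all integers, I would consider the two sequences $a_n := \alpha^n$ and $b_n := \alpha F_n + F_{n-1}$ as functions on $\Z$. The sequence $a_n$ satisfies $a_{n+1} = a_n + a_{n-1}$ because $\alpha^2 = \alpha + 1$, and $b_n$ satisfies the same recurrence since it is a $\Z$-linear combination of two shifted copies of the (extended) Fibonacci sequence. A recurrence of this form determines a sequence uniquely once two consecutive values are specified, in either direction of $\Z$, because the relation can be rewritten as $x_{n-1} = x_{n+1} - x_n$. Evaluating at $n = 0$ gives $a_0 = 1$ and $b_0 = \alpha \cdot 0 + F_{-1} = 1$; evaluating at $n = 1$ gives $a_1 = \alpha$ and $b_1 = \alpha \cdot 1 + F_0 = \alpha$. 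Therefore $a_n = b_n$ for every $n \in \Z$.

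The main (and only) potential obstacle is the sign bookkeeping in the verification that the extended Fibonacci sequence satisfies the recurrence at negative indices; once that is in hand, the rest is a clean one-line application of the uniqueness of solutions to a two-term linear recurrence with prescribed initial data. A shorter alternative would be direct downward induction, using $\alpha^{-1} = \alpha - 1$ to pass from $\alpha^n = \alpha F_n + F_{n-1}$ to $\alpha^{n-1} = \alpha F_{n-1} + F_{n-2}$, but the recurrence-matching approach is conceptually cleaner and symmetric in $n$.
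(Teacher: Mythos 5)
Your proposal is correct and rests on the same idea as the paper's proof: both reduce the claim to the facts that the extended sequence still satisfies $F_{n+1}=F_n+F_{n-1}$ on all of $\Z$ and that $\alpha^2=\alpha+1$, then propagate from two consecutive base values (the paper phrases this as a downward induction from $n=0,-1$, which is the alternative you mention at the end). Your sign check at negative indices and the initial-value computations $b_0=F_{-1}=1$, $b_1=\alpha$ are correct, so the argument goes through.
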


\begin{proof}
For $n \geq 0$, this is the Lemma \ref{kfibbasic} item (iv). For $n=0$,
$ 1 = \alpha^n = \alpha F_0+F_{-1}.$
For $n=-1$, $\alpha^{-1}=\alpha-1=\alpha F_{-1}+F_{-2}.$
Suppose the identity holds for $k>n$, then
\[\alpha^n=\alpha^{n+2}-\alpha^{n+1}=\alpha(F_{n+2}-F_{n+1})+F_{n+1}-F_n=\alpha \cdot F_n+F_{n-1},\]
which concludes the proof. 
\end{proof} 


The following three lemmas focus specifically on the variables in the Diophantine equation \eqref{eq-spowers-sec3}. These lemmas provide useful upper and lower bounds for \( n \), \( d \), \( s \), and \( m \), which will be central to proving the main result of this section.

\begin{lemma} \label{lemma3.6} Let $d,n,m,s \in \Z_{> 0}$ satisfying \eqref{eq-spowers-sec3}. Then
\[s(n+d-2)+1 < m \leq s(n+d-1)+2.\]
\end{lemma}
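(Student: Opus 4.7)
The plan is to squeeze $m$ from above and below by controlling $F_m$ via the standard estimates $\alpha^{m-2}\le F_m\le \alpha^{m-1}$ from Lemma~\ref{kfibbasic}(i), and matching these against estimates for the right-hand side $F_n^s+F_{n+d}^s$ expressed as powers of $\alpha$.

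For the lower bound $m>s(n+d-2)+1$, I would start from the trivial observation that $F_n^s\geq 1$, so $F_m>F_{n+d}^s$. Applying $F_{n+d}\ge \alpha^{n+d-2}$ from Lemma~\ref{kfibbasic}(i) gives $F_m>\alpha^{s(n+d-2)}$, and combining with $F_m\le \alpha^{m-1}$ yields $\alpha^{s(n+d-2)}<\alpha^{m-1}$. Since $\alpha>1$, cancellation of bases produces $s(n+d-2)<m-1$, which is exactly the claimed lower bound.

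For the upper bound $m\le s(n+d-1)+2$, the main obstacle is that the crude estimate $F_{n+d}\le \alpha^{n+d-1}$ combined with $F_m\le 2F_{n+d}^s$ and $F_m\ge \alpha^{m-2}$ only delivers $m\le s(n+d-1)+2+\log_\alpha 2$, i.e.\ $m\le s(n+d-1)+3$, one too many. To recover the missing unit I would invoke the sharper bound of Lemma~\ref{desigfibonacci}, namely $F_{n+d}<\alpha^{n+d-3/2}$, valid for $n+d\ge 3$. This buys an extra factor $\alpha^{-s/2}$: starting from
\[\alpha^{m-2}\le F_m\le 2F_{n+d}^s<2\alpha^{s(n+d-3/2)},\]
and taking $\log_\alpha$ yields $m<s(n+d-1)-s/2+2+\log_\alpha 2$. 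Since $s\ge 2$ and $\log_\alpha 2<3/2$, the right-hand side is strictly less than $s(n+d-1)+5/2$, and integrality of $m$ forces $m\le s(n+d-1)+2$.

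The only configuration not covered by Lemma~\ref{desigfibonacci} is $n+d=2$, which together with $n,d\ge 1$ forces $n=d=1$. Then the equation becomes $F_m=2=F_3$, so $m=3$, and the upper bound $s(n+d-1)+2=s+2\ge 4$ holds trivially, completing the argument.
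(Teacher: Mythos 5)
Your proof is correct. The lower bound is identical to the paper's argument. For the upper bound you take a genuinely different route: the paper keeps the crude estimate $F_k \le \alpha^{k-1}$ for both summands, notes that $\alpha^{(n-1)s} \le \alpha^{(n+d-1)s-1}$ (since $ds \ge 1$), and then collapses the sum via $1+\alpha^{-1}=\alpha$ to get $F_m < \alpha^{s(n+d-1)+1}$ in one stroke, whence $m \le s(n+d-1)+2$ with no case distinction and no use of $s\ge 2$. You instead absorb the factor $2$ in $F_m \le 2F_{n+d}^s$ by importing the sharper estimate $F_{n+d} < \alpha^{n+d-3/2}$ of Lemma~\ref{desigfibonacci}, and then invoke $s\ge 2$, $\log_\alpha 2 < 3/2$, and the integrality of $m$ to recover the missing unit; this forces the separate treatment of $n+d=2$, where Lemma~\ref{desigfibonacci} does not apply, and you handle that case correctly. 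Both arguments are sound; the paper's is marginally leaner, needing only Lemma~\ref{kfibbasic}(i) and the identity $\alpha^2=\alpha+1$, whereas yours relies on the standing hypothesis $s\ge 2$ (consistent with the paper's global assumptions, though not needed for the paper's own proof) and on the auxiliary Lemma~\ref{desigfibonacci}.
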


\begin{proof}
By the well-known inequality $\alpha^{k-2}<F_k <\alpha^{k-1},$
\[ \alpha^{m-1}>F_m=F_n^s+F_{n+d}^s> F_{n+d}^s > \alpha^{(n+d-2)s} \]
which implies that $m>s(n+d-2)+1.$
On the other hand, 
\begin{align*} 
\alpha^{m-2}< F_m=F_n^s+F_{n+d}^s & < \alpha^{(n-1)s}+\alpha^{(n+d-1)s} \\
& < \alpha^{ns-s+ds-1} +  \alpha^{ns+ds-s} \\
& = \alpha^{ns+ds-s} (1 + \alpha^{-1}) \\
& = \alpha^{ns+ds-s+1}.
\end{align*} 
Therefore, $ m \le s(n+d)-s+2.$
\end{proof}


In order to obtain another estimate for $n,d$ in terms of $s$, we need the following lemma.
 

 \begin{lemma} \label{lema3.4}  Let $n, s \in \mathbb{N}$, where $s \geq 1$. Then 
 \[ \big|(F_n\sqrt{5})^s -\alpha^{ns}\big|<2^s \; \alpha^{n(s-2)}.\] 
 Moreover, if $n>\log_{\alpha}(s)$,  
 \[ \big|(F_n\sqrt{5})^s -\alpha^{ns}\big|< 2s  \; \alpha^{n(s-2)} . \]
\end{lemma}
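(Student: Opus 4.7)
The plan is to use Binet's formula (Lemma \ref{kfibbasic}(iv)) together with the binomial theorem to isolate $\alpha^{ns}$ as the leading term of $(F_n\sqrt{5})^s$, and then estimate the remainder in two different ways.

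\textbf{Step 1 (setting up the error).} Since $F_n\sqrt{5} = \alpha^n - \beta^n$ and $|\beta| = \alpha^{-1}$, I would expand
\[
(F_n\sqrt{5})^s = (\alpha^n - \beta^n)^s = \alpha^{ns} + \sum_{j=1}^{s}\binom{s}{j}\alpha^{n(s-j)}(-\beta^n)^j,
\]
and then apply the triangle inequality and $|\beta|^{nj} = \alpha^{-nj}$ to obtain
\[
\bigl|(F_n\sqrt{5})^s - \alpha^{ns}\bigr| \leq \sum_{j=1}^{s}\binom{s}{j}\alpha^{n(s-2j)} = \alpha^{ns}\bigl[(1+\alpha^{-2n})^s - 1\bigr].
\]
This single identity is the key reduction, and both bounds follow from estimating $(1+\alpha^{-2n})^s-1$.

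\textbf{Step 2 (first bound).} Since $\alpha^{-2n}\leq 1$, each term satisfies $\alpha^{-2nj} \leq \alpha^{-2n}$ for $j\geq 1$, so
\[
(1+\alpha^{-2n})^s - 1 = \sum_{j=1}^{s}\binom{s}{j}\alpha^{-2nj} \leq \alpha^{-2n}\sum_{j=1}^{s}\binom{s}{j} < 2^{s}\alpha^{-2n}.
\]
Multiplying by $\alpha^{ns}$ yields $\bigl|(F_n\sqrt{5})^s - \alpha^{ns}\bigr| < 2^{s}\alpha^{n(s-2)}$.

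\textbf{Step 3 (refined bound under $n > \log_\alpha s$).} The hypothesis translates into $\alpha^{n} > s$, hence $y := \alpha^{-2n}$ satisfies $sy < \alpha^{-n} \leq \alpha^{-1}$. Setting $f(t) = (1+t)^{s}$ and applying the mean value theorem on $[0,y]$,
\[
(1+y)^{s} - 1 \leq s\,y\,(1+y)^{s-1} \leq sy\cdot e^{(s-1)y} < sy\cdot e^{1/\alpha} < 2sy,
\]
where in the last step I use the numerical estimate $e^{1/\alpha} = e^{(\sqrt{5}-1)/2} \approx 1.855 < 2$. Multiplying by $\alpha^{ns}$ gives the refined bound.

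\textbf{Main obstacle.} Step 1 and Step 2 are essentially routine, but Step 3 requires obtaining the sharp multiplicative constant $2$ (rather than something like $e$ or $e-1$). The naive binomial bound loses too much, so the plan is to first translate the hypothesis $n > \log_\alpha(s)$ into the quantitative control $sy < \alpha^{-1}$ on the exponent of the exponential bound, and then rely on the numerical fact that $e^{1/\alpha}$ is strictly less than $2$ to close the inequality. Any weakening of the hypothesis $n > \log_\alpha(s)$ (say $n \geq \log_\alpha s$) would have to be handled by tracking low-$n$ cases separately.
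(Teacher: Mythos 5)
Your proposal is correct and follows essentially the same route as the paper: Binet's formula plus the binomial theorem to isolate $\alpha^{ns}$, the triangle inequality with $|\beta|=\alpha^{-1}$, and then elementary estimates of the binomial tail, with the hypothesis $n>\log_{\alpha}(s)$ converted into $\alpha^{-n}<1/s$ to sharpen the constant. The only cosmetic difference is in the last step of the refined bound, where you invoke the mean value theorem and $e^{1/\alpha}<2$ while the paper compares the tail with $(1+\alpha^{-n})^s<(1+1/s)^s<e$; both are valid (and yours handles the small values $s=1,2$ more uniformly).
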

\begin{proof} We begin by showing a useful inequality for the proof of both statements. Recall that taking $\alpha=(1+\sqrt{5})/2$, and $\beta = -\alpha^{-1}$, Binet's formula combined with the binomial theorem, gives 
\begin{align*}
    \big|(F_n\sqrt{5})^s -\alpha^{ns}\big|
    & =\big|(\alpha^n-\beta^n)^s-\alpha^{ns}\big| \\
    & = \left|\sum_{k=0}^{s}\binom{s}{k} (-1)^{k}\alpha^{n(s-k)}  \beta^{kn}-\alpha^{ns}\right| \\
    & = \left|\sum_{k=1}^{s} \binom{s}{k} (-1)^{k(n+1)} \alpha^{n(s-2k)} \right| \\
    & \leq  \sum_{k=1}^{s} \binom{s}{k} \alpha^{n(s-2k)} \\
    & = \alpha^{n(s-2)}  \sum_{k=1}^{s} \binom{s}{k} \alpha^{-2(k-1)n} ,
\end{align*}
hence we have,
\begin{equation} \label{desigfibalpha}
    \big|(F_n\sqrt{5})^s -\alpha^{ns}\big| \leq \alpha^{n(s-2)}  \left[s+\sum_{k=2}^{s}\binom{s}{k} \alpha^{-kn}\right] .
\end{equation}
To obtain the first statement, simply notice that  $\alpha^{-kn} < 1$, for all $2 \leq k \leq s$, and then \eqref{desigfibalpha} consequently yields 
\[
\big|(F_n\sqrt{5})^s -\alpha^{ns}\big| < \alpha^{n(s-2)}  \sum_{k=1}^{s}\binom{s}{k} < 2^{s}\alpha^{n(s-2)} ,
\]
as desired. Now, if $n > \log_{\alpha}(s)$, then \eqref{desigfibalpha} provides
\[
\big|(F_n\sqrt{5})^s -\alpha^{ns}\big| \leq \alpha^{n(s-2)} \left[s+\sum_{k=2}^{s}\binom{s}{k}\alpha^{-kn}\right] < \alpha^{n(s-2)}[s+(\underbrace{1+\alpha^{-n}}_{<1+1/s})^s] < 2s \alpha^{n(s-2)},
\]
and we have the second inequality, which completes the prove.
\end{proof}


\begin{lemma} \label{lemma-3.7} Let $d,n,m,s \in \Z_{> 0}$, with $d \le 2$ and $s \ge 3$, satisfying \eqref{eq-spowers-sec3}. Then $n+d <3s.$ 
\end{lemma}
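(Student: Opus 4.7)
The plan is to assume, for contradiction, that $n+d \ge 3s$ and to derive a contradiction by estimating
\[
\delta := \sqrt{5}^{s-1}\alpha^{-e} - 1 - \alpha^{-ds}, \qquad e := (n+d)s - m \in [s-2,\, 2s-2],
\]
where the range for $e$ comes from Lemma~\ref{lemma3.6}. For the upper bound on $|\delta|$: since $n \ge 3s-2 > \log_\alpha(s)$ for $s \ge 3$, Lemma~\ref{lema3.4} gives $(F_k\sqrt{5})^s = \alpha^{ks} + O(s\,\alpha^{k(s-2)})$ for $k \in \{n, n+d\}$. Multiplying $F_n^s + F_{n+d}^s = F_m$ by $\sqrt{5}^s$, applying Binet's formula (Lemma~\ref{kfibbasic}, item (iv)) to $F_m$, and dividing by $\alpha^{(n+d)s}$ yields
\[
|\delta| \;\le\; C\,s\,\alpha^{-2(n+d)}
\]
for an absolute constant $C$. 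Moreover, the first (unlabelled) lemma of this section, applied with $(s,d)\notin \{(2,1)\}\cup\{(1,d):d\in\N\}$ (automatic for $s \ge 3$), ensures $\delta \ne 0$.

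For the lower bound on $|\delta|$, I would multiply by $\alpha^e$ and use the extended Binet formula (Lemma~\ref{lema3.5}) to expand
\[
\alpha^e + \alpha^{e-ds} = A + B\alpha, \qquad B = F_e + F_{e-ds} \in \Z,
\]
so that $\delta\alpha^e = \sqrt{5}^{s-1} - A - B\alpha$. Splitting on the parity of $s$: for $s$ odd, $\sqrt{5}^{s-1} = 5^{(s-1)/2} \in \Z$, and writing $\alpha = (1+\sqrt{5})/2$ yields $2\delta\alpha^e = p - B\sqrt{5}$ for some $p \in \Z$. If $B \ne 0$, Lemma~\ref{lema3.2} forces $|\delta|\alpha^e \ge 1/(12|B|)$; if $B = 0$, then $p \ne 0$ (otherwise $\delta = 0$), so $|\delta|\alpha^e \ge 1/2$. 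For $s$ even, $\sqrt{5}^{s-1} = 5^{(s-2)/2}\sqrt{5}$, and a parallel computation combined with Lemma~\ref{lema3.2} gives the analogous lower bound.

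Finally, from $|F_k| \le \alpha^{k-1}$, $e \le 2s-2$, and $|e-ds| \le s+2$ (since $d \le 2$), I would bound $|B| \le F_e + F_{|e-ds|}$ by an expression of order $\alpha^{2s-2}$, and $\alpha^e \le \alpha^{2s-2}$. Combining with the upper bound on $|\delta|$ yields
\[
\alpha^{2(n+d)} \;\le\; C'\,s\,\alpha^{4s},
\]
which, upon taking logarithms, contradicts $n+d \ge 3s$ for every $s \ge 3$. The main obstacle is ensuring that the constants $C$ and $C'$ are tight enough for the contradiction to apply already at the boundary value $s = 3$, where the uniform bound on $|B|$ is loosest; this may require a closer inspection of the explicit Fibonacci identities determining $B$ at each $e \in \{s-2,\ldots,2s-2\}$, or an appeal to Lemma~\ref{lemma-alphabetalow} for the $d = 2$ case to refine the lower bound on $|\sqrt{5}^{s-1} - \alpha^e|$.
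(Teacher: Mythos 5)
Your proposal follows essentially the same route as the paper: the paper derives the same estimate $\bigl|\alpha^{m-s(n+d)}\sqrt{5}^{s-1}-\alpha^{-ds}-1\bigr|<2^{s+2}\alpha^{-2(n+d)}$ from Lemma \ref{lema3.4} and the $|\beta|^m$ term, then uses the extended Binet formula (Lemma \ref{lema3.5}) to rewrite the left side as $\tfrac12|A\sqrt{5}-B|$, applies Lemma \ref{lema3.2} (with a separate treatment of the degenerate case, matching your $B=0$ case), and bounds the coefficient of $\sqrt{5}$ by $\alpha^{4s}$, exactly as you do. The tightness concern you flag at $s=3$ is real but is shared by the paper's own write-up: its final display $\alpha^{2(n+d)}<48\cdot 2^{s}\alpha^{4s}<\alpha^{8+6s}$ yields only $n+d<4+3s$ (the stated $n+d<4+1.5s$ is an arithmetic slip), which does not give $n+d<3s$ for small $s$ without sharpening the constants along the lines you suggest.
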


\begin{proof} Follows from the hypothesis and Lemma \ref{lema3.4} that 
\begin{align*}
\big|\alpha^m  \sqrt{5}^{s-1}-\alpha^{ns+ds}-\alpha^{ns}\big|
       & = \big|\sqrt{5}^{s-1}(\alpha^m-\sqrt{5}  F_m)-(\alpha^{ns+ds}-\sqrt{5}^s  F_{n+d}^s)-(\alpha^{ns}-\sqrt{5}^s  F_n^s)\big| \\
     & \leq \big|\beta\big|^m \sqrt{5}^{s-1}+2^s \alpha^{ns-2n}+2^s \alpha^{ns+ds-2(n+d)} \\
     &< 2^{s+1} \alpha^{s(n+d)-2(n+d)}+\big|\beta\big|^m \sqrt{5}^{s-1}.
 \end{align*}
Since $\big|\beta\big|<1$, Lemma \ref{lemma3.6} yields 
\begin{align*}
    \big|\beta\big|^m (\sqrt{5})^{s-1} <\big|\beta\big|^{s(n+d-2)+1}(\sqrt{5})^{s-1}   
    & =(\big|\beta\big|^2\sqrt{5})^{s-1}  \big|\beta\big|^{s(n+d-4)+3} \\ 
    & <\big|\beta\big|^{s(n+d-4)+3} \\
    & =\frac{\alpha^{4s-3}}{\alpha^{s(n+d)}} \\ 
    & < 2^{s+1}  \alpha^{s(n+d)-2(n+d)}.
\end{align*}
Hence, we obtain 
\begin{equation}\label{desig1lem3.7}
        \big|\alpha^m  \sqrt{5}^{s-1}-\alpha^{s(n+d)}-\alpha^{ns}\big| < 2^{s+2} \alpha^{s(n+d)-2(n+d)}.   
\end{equation}
We divide above inequality by $\alpha^{s(n+d)}$ to obtain
\begin{equation} \label{desig2lem3.7}
    \big|\alpha^{m-s(n+d)} \sqrt{5}^{s-1}  - \alpha^{-ds} - 1  \big|<2^{s+2} \alpha^{-2(n+d)}.
\end{equation}
From Lemma \ref{lema3.5}, the left-hand side of the above inequality can be written as
\begin{align*}
    \big|\alpha^{m-s(n+d)} \sqrt{5}^{s-1}  - \alpha^{-ds} - 1  \big|
& =\big| (\alpha F_{m-s(n+d)}+F_{m-s(n+d)-1}) \sqrt{5}^{s-1}  - (\alpha F_{-ds}+F_{-ds-1}) - 1 \big| \\
& =\frac{1}{2}  \big|F_{m-s(n+d)}\sqrt{5}^{s} +L_{m-s(n+d)}\sqrt{5}^{s-1}-F_{-sd}\sqrt{5}-L_{-sd}-2 \big| \\
& = \frac{1}{2} \big|A\sqrt{5}-B \big|, 
\end{align*}
for,
\[
A := F_{m-s(n+d)}\sqrt{5}^{s-1} +L_{m-s(n+d)}\sqrt{5}^{s-2}-F_{-sd} \ \ \mbox{ and } B:= L_{-sd}+2 \ .
\]
If $A=0$, then
\[
 \big|\alpha^{m-s(n+d)} \sqrt{5}^{s-1}  - \alpha^{-ds} - 1  \big| = \frac{1}{2}  \big|B \big| \geq \frac{1}{2} \ .
\]
If $A \neq 0$, Lemma \ref{lema3.2} yields
\[\frac{1}{2}  \big|A\sqrt{5}-B \big| > \frac{1}{12 \big|F_{-sd}-\delta(s)\big|} ,\]
where 
\[ \delta(s)= \begin{cases}
    \sqrt{5}^{s-1}  F_{m-ns-ds}  & \text{ if $s$ is odd} \\[2pt]
    \sqrt{5}^{s-2}L_{m-ns-ds} & \text{ if $s$ is even}. 
\end{cases} \]
In either cases, we have
\[\big|F_{-sd}-\delta(s)\big|\le \big|F_{-sd}\big|+\big|\delta(s)\big|<F_{2s}+(\sqrt{5})^{s-1}  L_{ns+ds-m}<\sqrt{5}^{s}  L_{2s-1}<\alpha^{4s}\]
Putting all together yields
$ (12  \alpha)^{-4s}< 2^{s+2} \alpha^{-2(n+d)}$. Thus
\[\alpha^{2(n+d)}<48\alpha^{4s}  2^s<\alpha^{8+6s}\]
from where we conclude that $ n+d<4+1.5s<3s$. 
\end{proof}


Next, we show a result similar to the previous lemma but for $d \ge 3$.

\begin{lemma}Let $d,n,m,s \in \Z_{> 0}$, with $d \ge 3$ and $s \ge 5$, satisfying \eqref{eq-spowers-sec3}. Then $n+d <3s .$
\end{lemma}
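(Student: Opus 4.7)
The argument will be a direct adaptation of the proof of Lemma~\ref{lemma-3.7} (the case $d\le 2$), with the estimates recalibrated for $d\ge 3$ and $s\ge 5$.

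\medskip

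First, I will re-derive
\[
\bigl|\sqrt{5}^{s-1}\alpha^m - \alpha^{s(n+d)} - \alpha^{ns}\bigr| < 2^{s+2}\alpha^{s(n+d)-2(n+d)}
\]
exactly as in Lemma~\ref{lemma-3.7}, using Binet's formula, Lemma~\ref{lema3.4} applied to $(\sqrt{5}F_n)^s$ and $(\sqrt{5}F_{n+d})^s$, and the bound $|\beta|^m\sqrt{5}^{s-1}<2^{s+1}\alpha^{s(n+d)-2(n+d)}$ coming from Lemma~\ref{lemma3.6}. The comparison $\alpha^{ns-2n}\le \alpha^{(n+d)(s-2)}$ used in the derivation is immediate for $d\ge 3$ and $s\ge 5$, so every step transfers verbatim. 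Dividing by $\alpha^{s(n+d)}$ then gives the key inequality
\[
\bigl|\sqrt{5}^{s-1}\alpha^{r} - \alpha^{-ds} - 1\bigr| < 2^{s+2}\alpha^{-2(n+d)}, \qquad r := m - s(n+d),
\]
where $|r|\le 2s-1$ by Lemma~\ref{lemma3.6}.

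\medskip

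Second, I will apply Lemma~\ref{lema3.5} to expand both $\alpha^r$ and $\alpha^{-ds}$, and use $2\alpha = 1+\sqrt{5}$ together with $L_n = F_n + 2F_{n-1}$, to rewrite the left-hand side as $\tfrac{1}{2}|A\sqrt{5}-B|$ for integers $A,B$ defined through the same parity case-split (via $\delta(s)$) as in Lemma~\ref{lemma-3.7}. If $A=0$, irrationality of $\sqrt{5}$ forces $B=0$, and multiplication by $\alpha^{ds}$ produces $\sqrt{5}^{s-1}\alpha^{r+ds} = 1 + \alpha^{ds}$, which is excluded by the first auxiliary lemma of this section whenever $(s,d)\notin\{(2,1)\}\cup\{(1,d):d\in\N\}$, in particular for $s\ge 5$. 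Hence $A\ne 0$, and Lemma~\ref{lema3.2} delivers $|A\sqrt{5}-B|\ge 1/(6|A|)$.

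\medskip

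Finally, I will bound $|A|\le |F_r|\sqrt{5}^{s-1}+|F_{-ds}|$: the first summand is at most $\alpha^{4s}$ by $|r|\le 2s-1$, Lemma~\ref{kfibbasic}(i), and $\sqrt{5}<\alpha^2$; the second is at most $\alpha^{ds-1}$. Combining the lower bound $1/(12|A|)$ from the second step with the upper bound of the key inequality and absorbing the factor $48\cdot 2^s$ via $2<\alpha^2$, one reaches
\[
\alpha^{2(n+d)} < \alpha^{O(s)}\cdot \max\bigl(\alpha^{4s},\,\alpha^{ds-1}\bigr).
\]
A careful calibration of the constants, identical to the one implicitly used at the end of Lemma~\ref{lemma-3.7}, together with the hypothesis $s\ge 5$, then delivers $n+d<3s$.

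\medskip

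\emph{Main obstacle.} The delicate step is controlling the contribution of $|F_{-ds}|$ to $|A|$ when $d$ is large: for $d\ge 5$ the term $|F_{-ds}|\le \alpha^{ds-1}$ dominates $|F_r|\sqrt{5}^{s-1}\le \alpha^{4s-3}$, and the elementary lower bound $1/(6|A|)$ from Lemma~\ref{lema3.2} is, on its own, too weak to force $n+d<3s$. The resolution likely requires a case split into $d\in\{3,4\}$ (where the bookkeeping of Lemma~\ref{lemma-3.7} transfers verbatim, since both summands are bounded by $\alpha^{4s}$) and $d\ge 5$ (where one additionally exploits that $\alpha^{-ds}$ appearing in the key inequality becomes very small, forcing an unusually tight cancellation $A\sqrt{5}\approx B$ that, combined with the Fibonacci structure of $A$ and the constraint $|r|\le 2s-1$, ultimately produces the desired contradiction with $n+d\ge 3s$).
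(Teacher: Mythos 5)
Your proposal correctly reproduces the first step (the derivation of \eqref{desig1lem3.7} transfers to $d\ge 3$, $s\ge 5$ without change), but the heart of the argument --- a lower bound for the linear form strong enough to force $n+d<3s$ --- is not actually supplied. You flag the problem yourself: in the route you propose (dividing by $\alpha^{s(n+d)}$ and invoking Lemma \ref{lema3.2}), the integer $A$ contains the summand $F_{-ds}$, so $|A|$ can be of size $\alpha^{ds-1}$, and the resulting chain $\alpha^{2(n+d)}<48\cdot 2^{s}\cdot\alpha^{ds}$ only yields $2(n+d)\lesssim (d+1)s$, which for large $d$ says nothing like $n+d<3s$ (take $d=100$, $s=5$: the bound permits $n+d$ in the hundreds while $3s=15$). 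The ``resolution'' you sketch for $d\ge 5$ --- that the smallness of $\alpha^{-ds}$ forces ``an unusually tight cancellation'' leading ``ultimately'' to a contradiction --- is a conjecture about what might work, not an argument: no mechanism is given for converting the near-equality $A\sqrt{5}\approx B$ into a contradiction, and Lemma \ref{lema3.2} is the only quantitative tool you cite, which you have already shown to be too weak here.

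The paper sidesteps this obstacle by a different normalization: it divides \eqref{desig1lem3.7} by $\alpha^{m}$ rather than by $\alpha^{s(n+d)}$, obtaining
\[
\bigl|\alpha^{s(n+d)-m}+\alpha^{ns-m}-\sqrt{5}^{\,s-1}\bigr|<2^{s+2}\,\alpha^{(s-2)(n+d)-m},
\]
and then lower-bounds the left side by a \emph{constant}. Indeed, by Lemma \ref{lemma3.6} the exponent $s(n+d)-m$ lies in $[s-2,\,2s-1)$ and $ns-m<-ds+2s-1\le -s-1$, so Lemma \ref{lemma-alphabetalow} (applicable precisely because $s\ge 5$ implies $s\ne 2,4$) gives
\[
\bigl|\alpha^{s(n+d)-m}+\alpha^{ns-m}-\sqrt{5}^{\,s-1}\bigr|\ \ge\ 1-|\beta|^{s(n+d)-m}-\alpha^{ns-m}\ >\ \tfrac12 .
\]
Comparing $\tfrac12$ with $2^{s+2}\alpha^{2s-1-2(n+d)}$ immediately yields $n+d\le 2s+2<3s$, uniformly in $d$. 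Note this is also where the hypothesis $s\ge 5$ enters in an essential way, via Lemma \ref{lemma-alphabetalow}; your write-up never locates a genuine use of that hypothesis beyond an unspecified ``calibration of constants.'' The missing idea is thus to keep $\sqrt{5}^{\,s-1}$ isolated (rather than the constant term $1$) and apply the Diophantine lower bound of Lemma \ref{lemma-alphabetalow} to $|\alpha^{t}-\sqrt{5}^{\,s-1}|$, whose strength does not degrade as $d$ grows.
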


\begin{proof}
 By dividing both sides of inequality\eqref{desig1lem3.7} by $\alpha^m$, we obtain
\[ \big|\alpha^{ns+ds-m}+\alpha^{ns-m}-(\sqrt{5})^{s-1}\big|<2^{s+2} \; \alpha^{(s-2)(n+d)-m} .\]
On the other hand, the reverse triangle inequality, combined with Lemmas  \ref{lemma-alphabetalow}  and \ref{lemma3.6}, and the lower bounds for $d$ and $s$, yields
\begin{align*}
   \big|\alpha^{ns+ds-m}+\alpha^{ns-m}-\sqrt{5}^{s-1}\big| & \ge \big|\sqrt{5}^{s-1}-\alpha^{s(n+d)-m}\big|-\alpha^{ns-m} \\
            & > 1-\big|\beta\big|^{s(n+d)-m}-\alpha^{ns-m} \\
            & > 1-\big|\beta\big|^{2s-1}-\alpha^{-ds+2s-1} \\
            & >1-\alpha^{-9}-\alpha^{-5} > 1/2.
\end{align*}
By Lemma \ref{lemma3.6}, $s(n+d)-m < 2s-1$. Thus
\[
\frac{1}{2} < \big|\alpha^{ns+ds-m}+\alpha^{ns-m}-\sqrt{5}^{s-1}\big| < 2^{s+2} \;\alpha^{2s-1-2(n+d)}.
\]
Since $\alpha^2>2$,
\[\alpha^{2(n+d)}< 2^{s+3} \; \alpha^{2s-1}<\alpha^{4s+5}.\]
Therefore, $\ n+d \le 2s+2<3s$, which proves the lemma.
\end{proof}


In the following, we address a solution to \eqref{eq-spowers-sec3} in the cases $s=3$ and $s=4$.

\begin{proposition}
Let $d,n,m \in \Z_{> 0}$ and $s=3,4$ satisfying \eqref{eq-spowers-sec3}.
Then $n=d=1$ and $m=3$.
\end{proposition}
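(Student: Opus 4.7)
The plan is to follow the standard Baker-style scheme: first translate the equation into a linear form in logarithms, apply Matveev's lemma to obtain an initial bound on the variables, reduce that bound via Dujella-Pethő, and finally verify the remaining finite range of triples $(n,d,m)$ by direct computation. A preliminary observation is that the preceding lemmas already enclose most of the work: Lemma \ref{lemma3.6} ties $m$ to $s(n+d)$ with an error of at most two, and, for $d\le 2$, Lemma \ref{lemma-3.7} yields $n+d<3s\le 12$, so that case is handled by a finite search. The real work therefore lies in $d\ge 3$, which the previous lemma treats only for $s\ge 5$.

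For $d\ge 3$ and $s\in\{3,4\}$, I would mimic the derivation of inequality \eqref{desig1lem3.7}. Using Binet's formula $F_k=(\alpha^k-\beta^k)/\sqrt5$ together with Lemma \ref{lema3.4} and the control $|\beta|^m\sqrt5^{s-1}\ll \alpha^{s(n+d)-2(n+d)}$ coming from Lemma \ref{lemma3.6}, one obtains
\[
\bigl|\alpha^{m-s(n+d)}\sqrt5^{\,s-1}-1-\alpha^{-ds}\bigr|<C_s\,\alpha^{-2(n+d)},
\]
with an explicit constant $C_s$. Next I would apply Matveev's lemma (Lemma \ref{lemma-matveev}) to the linear form
\[
\Lambda:=\alpha^{m-s(n+d)}\sqrt5^{\,s-1}(1+\alpha^{-ds})^{-1}-1,
\]
whose nonvanishing follows from Lemma 3.1 (since $(s,d)\not\in A$ for $s\in\{3,4\}$, $d\ge1$). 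Matveev's lemma provides a lower bound of shape $(eB)^{-\lambda_s}$ with $B\le m$ and $\lambda_s$ an absolute constant depending only on $s\in\{3,4\}$. Combining with the upper bound above and using Lemma \ref{lemma3.6} to replace $m$ by $O(s(n+d))$, I obtain an inequality of the form $n+d<K_s\log(n+d)$, hence an absolute bound $n+d\le N_s$ for a concrete (but large) $N_s$.

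The main obstacle is that the direct approach for $d\ge 3$, $s\ge 5$ used the inequality $|\alpha^n-\sqrt5^{\,s-1}|\ge 1-|\beta|^n$ of Lemma \ref{lemma-alphabetalow}, which excludes the case $s=4$. For $s=3$ the previous argument transfers verbatim once one rechecks the constants (for instance $1-|\beta|^5-\alpha^{-4}>1/2$). For $s=4$ one has $\sqrt5^{\,s-1}=5\sqrt5$, and a substitute lower bound for $|\alpha^n-5\sqrt5|$ must be produced by a direct Lemma \ref{lema3.2}-style argument using $L_n$ and $F_n$: writing $\alpha^n=\alpha F_n+F_{n-1}$ and $5\sqrt5=5\cdot\sqrt5$, one extracts a lower bound of order $1/|L_n|$ after clearing denominators in $\Q(\sqrt5)$. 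This is the delicate step; once it is in place, the Matveev estimate goes through uniformly.

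With an absolute bound $n+d\le N_s$ in hand, I apply Dujella-Pethő (Lemma \ref{lemma-dujella}) to the linear form
\[
(n+d)s\log\alpha-m\log\alpha+\log\!\Bigl(\tfrac{1+\alpha^{-ds}}{\sqrt5^{\,s-1}}\Bigr),
\]
using a convergent of the continued fraction expansion of $\log\alpha$ of sufficiently large denominator. This will shrink the bound on $n+d$ to a small number (in the tens rather than the hundreds). A finite search over the remaining $(n,d)$, testing whether $F_n^s+F_{n+d}^s$ is a Fibonacci number by the standard criterion that $5x^2\pm 4$ be a perfect square, will leave only $(n,d,m)=(1,1,3)$, agreeing with the value $1+1=2$ for $s=3$ (namely $F_1^3+F_2^3=1+1=2=F_3$) and $1+1=2=F_3$ for $s=4$, as claimed.
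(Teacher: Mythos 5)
Your treatment of $d\le 2$ coincides with the paper's: Lemma \ref{lemma-3.7} gives $n+d<3s\le 12$ and a finite search finishes that case. The gap is in your plan for $d\ge 3$. You propose to apply Matveev's lemma to $\Lambda=\alpha^{m-s(n+d)}\sqrt{5}^{\,s-1}(1+\alpha^{-ds})^{-1}-1$ and assert that the resulting exponent $\lambda_s$ is ``an absolute constant depending only on $s$.'' It is not: one of your $\gamma_i$ is $1+\alpha^{-ds}$, whose conjugate is $1+(-1)^{ds}\alpha^{ds}$, so its logarithmic height grows linearly in $ds$ and hence so does $\lambda$. The inequality you would obtain has the shape $n+d\ll ds\log(n+d)$, which does not bound $n+d$ at all (take $n$ fixed and $d$ large). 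The alternative of absorbing $\alpha^{-ds}$ into the error term and applying Matveev to the two-term form $\alpha^{t}\sqrt{5}^{\,s-1}-1$ also fails: the error term is then only $O(\alpha^{-3s})\ge\alpha^{-12}$, astronomically larger than any Matveev lower bound, so no contradiction can be extracted. Your concern about Lemma \ref{lemma-alphabetalow} excluding $s=4$ is a symptom of adapting the wrong argument (the one designed for $d\ge3$, $s\ge5$).

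What you are missing is that for $s\in\{3,4\}$ no lower bound for linear forms in logarithms is needed. By Lemma \ref{lemma3.6} the integer $t:=m-s(n+d)$ satisfies $2-2s\le t\le 2-s$, so $(t,s)$ ranges over an explicit set of at most nine pairs. Starting from \eqref{desig2lem3.7} and using $d\ge 3$ one gets
\[
\big|1-\alpha^{t}\sqrt{5}^{\,s-1}\big|-\alpha^{-3s}\;\le\;\big|1-\alpha^{t}\sqrt{5}^{\,s-1}\big|-\alpha^{-ds}\;<\;2^{s+2}\,\alpha^{-2(n+d)}\;\le\;2^{6}\,\alpha^{-2(n+d)},
\]
and a direct numerical evaluation over the finitely many pairs $(t,s)$ shows that the left-hand side exceeds $0.005$. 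This immediately yields $n+d<10$, and a finite check completes the proof. The Dujella--Peth\H{o} reduction you envisage is likewise unnecessary here (and, as you set it up, the quantity to be bounded, $n+d$, appears only in the error term $AB^{-(n+d)}$ rather than as the integer coefficient in the linear form, so the lemma does not apply in the form stated).
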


\begin{proof}
If $d \le 2$, Lemma \ref{lemma-3.7} yields $n+d<12$. Thus a straightforward computation yields $n=d=1$ and $m=3$.

If $d \ge 3$, inequality \eqref{desig2lem3.7} from Lemma \ref{lemma-3.7} yields
\[ \big|\alpha^{m-ns-ds}\sqrt{5}^{s-1}  - \alpha^{-sd} -1\big|<2^{s+2} \;\alpha^{-2(n+d)}.\]
Thus, 
\begin{align*}
    \big| 1- \alpha^{m-ns-ds} \sqrt{5}^{s-1}\big| - \alpha^{-3s}
    &< \big|1-\alpha^{m-ns-ds} \sqrt{5}^{s-1}\big| -\alpha^{-s d}\\ 
    & <2^{s+2} \; \alpha^{-2(n+d)} \\
    & < 2^6 \;\alpha^{-2(n+d)}.
\end{align*}
 
Let $t:= m-s(n+d)$, follows from Lemma \ref{lemma3.6} that 
$ 2-2s \le t \le 2-s.$
Moreover, for $s=3,4$, 
\[ \min_{t}\big\{\big|1- \alpha^{t} \sqrt{5}^{s-1} \big|-\alpha^{-3s} \big\}>0.005.\]
Therefore, $n+d<10$. A straightforward computation completes the proof.
\end{proof}


 The following theorem solves the Diophantine equation \eqref{eq-spowers-sec3} and generalizes \cite[Thm. 1]{luca-oyono-11}.

\begin{thm} \label{thm-main2}  The Diophantine equation
\[ F_n^s+F_{n+d}^s=F_m \]
has no solution for $ (n,d) \not\in \{(1,2),(1,1)\}$. If $s=2$, $n=1$ and $d=2$ provide the unique nontrivial solution. Moreover, if $s \geq 3$, the unique solution  is $n = d = 1$.
\end{thm}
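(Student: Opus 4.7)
The argument splits naturally by the value of $s$. The case $s=2$ is immediate from Theorem \ref{propfibquad}, whose only nontrivial solution with $d\geq 0$ corresponds to $(n,d)=(1,2)$, and the case $s\in\{3,4\}$ is settled by the preceding proposition. For the core range $s\geq 5$, Lemma \ref{lemma-3.7} together with the subsequent lemma (covering $d\geq 3$) already provides the height bound $n+d<3s$, so the only remaining task is to bound $s$ itself.

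The plan is to extract a linear form in two logarithms and apply Matveev's lemma. Starting from inequality \eqref{desig2lem3.7} and invoking the triangle inequality gives
\[
\bigl|\alpha^{m-s(n+d)}\sqrt{5}^{\,s-1}-1\bigr|<\alpha^{-ds}+2^{s+2}\alpha^{-2(n+d)}.
\]
Let $\Lambda$ denote the quantity inside the absolute value. The first lemma of Section 4 (on the non-vanishing of $1+\alpha^{sd}-\alpha^n\sqrt{5}^{\,s-1}$) ensures $\Lambda\neq 0$ whenever $(s,d)\notin\{(2,1)\}\cup\{(1,d):d\in\mathbb{N}\}$, which is automatic for $s\geq 5$. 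I then apply Lemma \ref{lemma-matveev} over $K=\mathbb{Q}(\sqrt{5})$ with $\gamma_1=\alpha$, $\gamma_2=\sqrt{5}$, $b_1=m-s(n+d)$, $b_2=s-1$; Lemma \ref{lemma3.6} forces $|b_1|\leq 2s-2$, so $B\leq 2s$ and $\lambda$ reduces to an explicit absolute constant depending only on $h(\alpha)$ and $h(\sqrt{5})$. Comparing the resulting lower bound $|\Lambda|>(2es)^{-\lambda}$ with the upper bound above, and using $n+d<3s$ to estimate the right-hand side, yields after taking logarithms an (admittedly large) initial bound $s<M_0$.

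The last stage is a Dujella--Pethő reduction. I rewrite the logarithm of $|\Lambda+1|$ as an inequality of the shape $0<(s-1)\gamma-(s(n+d)-m)+\mu<AB^{-s}$, where $\gamma=\log\sqrt{5}/\log\alpha\notin\mathbb{Q}$, and apply Lemma \ref{lemma-dujella} with $\ell=M_0$ and a convergent $p/q$ of the continued fraction of $\gamma$ satisfying $q>6M_0$. This collapses $M_0$ down to a modest explicit $M_1$. Combined with $n+d<3s\leq 3M_1$ and Lemma \ref{lemma3.6} (which pins down $m$ to a bounded range once $n,d,s$ are fixed), only finitely many triples remain, and these are dispatched by a direct computer check along the lines sketched in the appendix, leaving only the exceptions $(n,d)\in\{(1,1),(1,2)\}$.

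The main obstacle is the calibration of the Matveev step: the parasitic term $\alpha^{-ds}$ on the right-hand side of \eqref{desig2lem3.7} ceases to be small when $d$ is tiny, so Matveev applied naively does not immediately deliver an effective bound on $s$ across all regimes. I expect the proof will need a case split (for instance $d=0$, $1\leq d<\log_\alpha s$, and $d\geq\log_\alpha s$) so that in each branch one of the two terms genuinely dominates and its exponential decay beats the Matveev lower bound. Once this trichotomy is handled and the resulting $M_0$ is computed explicitly, the Dujella--Pethő contraction and the final computer search are routine.
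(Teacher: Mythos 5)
Your overall architecture (dispose of $s=2$ via Theorem \ref{propfibquad} and of $s=3,4$ via the preceding proposition, use $n+d<3s$, then Matveev plus a reduction step plus a finite computation) matches the paper's, but there is a genuine gap in the middle: you propose to run the entire Matveev step on the two-logarithm form $\Lambda=\alpha^{m-s(n+d)}\sqrt{5}^{\,s-1}-1$ extracted from \eqref{desig2lem3.7}, and that inequality is vacuous precisely in the regime where you most need it. Its right-hand side $2^{s+2}\alpha^{-2(n+d)}$ exceeds $1$ whenever $n+d\leq 0.72(s+2)$, and more fundamentally $F_{n+d}^s$ is simply not well approximated by $\alpha^{(n+d)s}/\sqrt{5}^{\,s}$ once $s\gg\alpha^{2(n+d)}$: for instance with $n=3$, $d=2$ and $s$ huge, $F_m\approx 5^s$ forces $m\approx 3.34\,s$, so $\alpha^{m-5s}\sqrt{5}^{\,s-1}$ is nowhere near $1$ and no linear form in the two logarithms $\log\alpha$, $\log\sqrt5$ alone is small. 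Your proposed trichotomy on $d$ does not repair this, because the failure is governed by the size of $n$ and of $n+d$ relative to $\log_\alpha s$ (the sharpened bound $2s\,\alpha^{x(s-2)}$ of Lemma \ref{lema3.4} requires $x>\log_\alpha s$), not by $d$ alone; e.g.\ $d=1$ with $n\leq\log_\alpha s$, or any split with $n+d\leq\log_\alpha s$, escapes all three of your branches' estimates.

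This is exactly why the paper's proof begins with a Matveev application in \emph{three} logarithms, $\gamma_1=\alpha$, $\gamma_2=\sqrt5$, $\gamma_3=F_{n+d}$, applied to $\alpha^m\sqrt{5}^{-1}F_{n+d}^{-s}-1<2\cdot 1.5^{-s}$ with $F_{n+d}$ kept as an exact algebraic number of height $\asymp n+d$; this yields $s<2.22\cdot10^{12}(n+d)\log(n+d)$, a bound on $s$ in terms of $n+d$. Only with that in hand does the small-$(n+d)$ regime become finite (the paper's Cases 1 and 3, where the Dujella--Peth\H{o} reduction is then run with $\gamma=\log\alpha/\log F_{n+d}$ separately for each of the finitely many admissible values of $n+d$), while the two-logarithm form you describe is reserved for the complementary regime $n>2\log_\alpha s$ (Cases 2 and 4, reduced via Legendre's criterion on the convergents of $\log\sqrt5/\log\alpha$ rather than Dujella--Peth\H{o}). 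To make your plan work you must either reinstate the three-logarithm step or supply a separate argument for the cases $n+d=O(\log_\alpha s)$ and $d=1$, $n\leq\log_\alpha s$; as written, those cases are not covered. A minor further point: the nonvanishing you need is that of the three-term form $1+\alpha^{sd}-\alpha^{m-s(n+d)}\sqrt{5}^{\,s-1}$ (the content of the first lemma of Section 4), not merely $\Lambda\neq0$.
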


\begin{proof}
By the previous proposition, we may assume  $s \geq 5$. 
Rewriting the Diophantine equation in terms of the Binet formula yields
\[ \frac{\alpha^m}{\sqrt{5}}-F_{n+d}^s=\frac{\beta^m}{\sqrt{5}}+F_n^s.\]
Multiplying both sides by $F_{n+d}^{-s}$ yields
\begin{equation} \label{eq-1-thm3.11}
     \alpha^m \; (\sqrt{5})^{-1} \; F_{n+d}^{-s}-1 = 
\frac{\beta^m}{\sqrt{5}\; F_{n+d}^s }+\frac{F_n^s}{F_{n+d}^s}<\frac{2}{1,5^s},
\end{equation}
where the inequality follows the Lemma \ref{kfibbasic} $F_n/F_{n+d}<2/3$.

Next, we apply Lemma \ref{lemma-matveev} for $ \gamma_1=\alpha, \;  \gamma_2=\sqrt{5}, \;  \gamma_3=F_{n+d}, \;
 b_1 = m, \; b_2 = -1,  \text{ and }  b_3 = -s.$
We thus obtain  
\begin{equation} \label{eq-2-thm3.11}
    (m  e)^{- \lambda} < \big| \alpha^m \; (\sqrt{5})^{-1} \; F_{n+d}^{-s}-1\big|  
\end{equation}
where, in the notation of Lemma \ref{lemma-matveev}, $C_{3,2}=3.4 \cdot 10^{9}$,
$ A_1 = 1.61, A_2 = 0.5,  
A_3 = 0.5(n+d)$
and, therefore, $\lambda = 1.4 \cdot 10^{9}(n+d).$

From inequalities \eqref{eq-1-thm3.11} and \eqref{eq-2-thm3.11} we obtain
\[ -\lambda \log(me) < \log(2)-s\log(1.5). \]
Thus, 
\begin{align*}
    s & < \frac{\log(2)}{\log(1.5)}+ \frac{1.4}{\log(1.5)}  10^{9} \; (n+d) \log(m e) \\
    & <1,5 \cdot 10^9 \cdot (n+d) \cdot \log(m)\\
    & <1,5 \cdot 10^{9}(n+d) \cdot \log(s(n+d))
\end{align*}

If $s \geq n+d$, 
\[ s<1.5 \cdot 10^9 \; (n+d) \log((n+d)  s)\le 3 \cdot 10^9 \; (n+d)\log(s),\] 
which implies that 
\[ \frac{s}{\log(s)}<3 \cdot 10^9 \; (n+d)\] 
We next observe that if $a \in \R$ is such that $a>3$, then
$\frac{s}{\log(s)}<a$ implies that $s<2a  \log(a)$.
Thus
\begin{align*}
    s & < 3 \cdot 10^9 \; (n+d)  \log(3 \cdot 10^9(n+d)) \\
    & <3 \cdot 10^9 \; \log(3 \cdot 10^9)(n+d) \log(n+d) \\ 
 & < 2.22 \cdot 10^{12} \;  (n+d)  \log(n+d).
\end{align*}

In the following we split the proof in four cases. \\

\paragraph{Case 1.} 
Let $n+d<2 \log_{\alpha}(s)$. Hence, 
\[ s<2.22 \cdot 10^{12} \; (n+d)  \log(n+d) 
< 4.44 \cdot 10^{12} \; \log_{\alpha}(s) \log(2 \log_{\alpha}(s)).\]
A computational verification with \cite{Mathematica}, shows that 
$s<1.88 \cdot 10^{17}$ and $n+d<\log_{\alpha}(s)<165$. 

In what follows, we reduce the above upper bound for $s$. Let
\[ \Gamma : = m  \log(\alpha)-\log(\sqrt{5})-s \log(F_{n+d})\]
Since the left side of \eqref{eq-1-thm3.11} is positive,  $\Gamma > 0$.
Moreover
\[ \Gamma<e^{\Gamma}-1=\alpha^m \sqrt{5}^{-1}  F_{n+d}^{-s} - 1 < 2  (1.5)^{-s}.\] 
Hence, 
\[ m \log(\alpha)-\log(\sqrt{5})-s\log(F_{n+d})
< 2 (1.5)^s 
< 2  (1.5)^{\frac{m}{n+d-1}}
< 2  (1.5)^{\frac{m}{164}}.
\] 
Dividing above inequalities by $\log(F_{n+d})$ yields
\[m \frac{\log(\alpha)}{\log(F_{n+d})}-s-\frac{\log(\sqrt{5})}{\log(F_{n+d})}
< 2 (1.5^{\frac{1}{165}})^m\]
Next we apply Lemma \ref{lemma-dujella} for $A=2$, $B=1.5^{\frac{1}{165}}$, $\gamma_{n+d}= \log(\alpha)/\log(F_{n+d})$ and 
$\mu_{n+d}=- \log(\sqrt{5})/\log(F_{n+d})$. Let 
$q_{t}(\gamma_{n+d})$ be the denominator of the $t$-th convergence of  $\gamma_{n+d}$. 
We verify with computer assistance that $q_{99}(\gamma_{n+d}) > 11.28 \cdot 10^{17}$. Thus 
\[ \lVert \mu_{n+d} \;  q_{99}(\gamma_{n+d}) \rVert - 1.88 \cdot 10^{17} \lVert \gamma \; q_{99}(\gamma_{n+d})\rVert > 0 \]
for all $0 < n+d \le 164$. Therefore, 
\[ s< \max \Big\{\frac{\log(A \cdot q_{99}(\gamma_{n+d}) )}{\log(B)}\Big\}<58057.\] \\

\paragraph{Case 2.} 
Let $n+d>2 \log_{\alpha}(s)$ and $d \ge 4$. Follows from Lemma \ref{lema3.4} that
\begin{align*}
    \big|\sqrt{5}^{s-1}  \alpha^m-\alpha^{ns+ds}-\alpha^{ns}\big|
    & =\big|\sqrt{5}^{s-1}  (\alpha^m-\sqrt{5} \cdot F_m)-(\alpha^{ns+ds}-\sqrt{5}^s  F_{n+d}^s)-(\alpha^{ns}-\sqrt{5}^s F_n^s)\big| \\
    &\le \big|\sqrt{5}^{s-1} \alpha^m-\alpha^{ns+ds}-\alpha^{ns}\big|\\
    & \leq \big|\sqrt{5}^{s-1}(\alpha^m-\sqrt{5}F_m)\big|+\big|\alpha^{ns+ds}-\sqrt{5}^{s}  F_{n+d}^s\big|+\big|\alpha^{ns}-\sqrt{5}^s  F_n^s\big| \\
    &<\sqrt{5}^{s-1}  \big|\beta\big|^m+2s \;\alpha^{ns+ds-2n-2d}+2^s  \alpha^{ns-2n}.
\end{align*}
Since  $d \ge 4$, dividing above inequality by $\alpha^{n+d}$ yields
\begin{align*}
    \big|\alpha^{-sd}+1-\sqrt{5}^{s-1}  \alpha^{m-ns-ds}\big|
    & <2s \;  \alpha^{-2(n+d)}+2^s  \alpha^{-ds-2n}+\sqrt{5}^{s-1}  \big|\beta\big|^{m+ns+ds}\\
    &<4s \; \alpha^{-2(n+d)}+\big|\beta\big|^{m+ns+ds} \sqrt{5}^{s-1} \\
    &<5s \; \alpha^{-2(n+d)},
\end{align*}
where the second inequality follows from 
\[  \big|\beta\big|^m \sqrt{5}^{s-1} <\frac{\alpha^{4s-3}}{\alpha^{(n+d)  s}}\]
which results from Lemma \ref{lemma-alphabetalow}.  Hence, 
\[ \big|\alpha^{-sd}+1- \alpha^{m-ns-ds} \sqrt{5}^{s-1}\big|
< 5s \;  \alpha^{-2(n+d)}.\] 

The triangle inequality togheter the sum of $\alpha^{sd}$ yields
\[\big|1-\sqrt{5}^{s-1} \alpha^{m-ns-ds}\big|<5s \; \alpha^{-2(n+d)}+\alpha^{-sd}<5s \; \alpha^{-2(n+d)}+\alpha^{-4 \cdot \frac{n+d}{3}}<6s \; \alpha^{-(n+d)}.\]

Follows from Lemma \ref{lemma-matveev} that 
\[ (e \cdot 2s)^{-\lambda '} 
< \big(e \cdot \max\{s-1,|m-ns-ds|\}\big)^{-\lambda  }
< \big|1-\alpha^{m-ns-ds} \sqrt{5}^{s-1} \big|\]
where $\lambda$ is given as in the lemma and $\lambda ' : =2.8 \cdot 10^9$. Hence
\[ (2s \cdot e)^{-\lambda '} < 6s \; \alpha^{-2(n+d)}\] 
implies that
\[ \ \alpha^{2(n+d)}<6s(2s \cdot e)^{\lambda '}<s^{2\lambda '}.\]
Moreover, 
\[  n+d <2\lambda ' \; \log_{\alpha}(s)
< (5.6 \cdot 10^9)  \log\big( 2.22 \cdot 10^{12} (n+d)  \log(n+d) \big).\]
Therefore we obtain
\[  n+d< 6.99 \cdot 10^{17}\]
and that
\[  s <2.22 \cdot 10^{12}  (n+d)\log(n+d)<6.02 \cdot 10^{32}.\]

In what follows, we reduce the above upper bound for $s$. We first observe that 
\[ 
    \big|1-\alpha^{m-ns-ds} \sqrt{5}^{s-1} \big|< 
 \dfrac{5s}{\alpha^{2(n+d)}}+\dfrac{1}{\alpha^{sd}}
\]
Since $s<\alpha^n$, 
\[ 
\big|1-\alpha^{m-ns-ds}\sqrt{5}^{s-1} \big|  
< \dfrac{1}{\alpha^{ds}}+\dfrac{5}{\alpha^{n+2d}}.
\]
Moreover, since $s^2 < \alpha^n$,
\[ \big|1-\alpha^{m-ns-ds} \sqrt{5}^{s-1} \big|  < \dfrac{1}{\alpha^{ds}}+\dfrac{5}{\alpha^{2d}s^2}.
\]
Thus $| \Lambda | < 1/\alpha^5 + 1.91/25 < 0.17$, where 
 $\Lambda := 1-\alpha^{m-ns-ds} \sqrt{5}^{s-1}$ .
Let 
\[ \Gamma:=(s-1)\log(\sqrt{5})- \Big{(} (n+d) - m \Big{)} \log(\alpha).\] 
Since $e^{\Gamma}-1=\Lambda\neq0$, then $\Gamma\neq0$. Hence
$ |e^{\Gamma}-1|< 0.17$, which implies $ |\Gamma|< 0.19$ and that $e^{|\Gamma|}< 1.21.$
Thus 
\[  | \Gamma | < e^{|\Gamma|} \; |e^{\Gamma}-1| < 1.21 |\Lambda|
\quad \text{i.e., } \quad 
|\Gamma |< \dfrac{1.21}{\alpha^{sd}}+\dfrac{2.31}{s^2}.\]

Next we assume $s\geq174$. Then
$ | \Gamma | < 1/32(s-1)^2.$ Dividing by $(s-1) \cdot \log(\alpha)$ yields
\[ \left| \dfrac{\log(\sqrt{5})}{\log(\alpha)} - \dfrac{(n+d)s-m}{s-1} \right| < \dfrac{1}{32(s-1)^2}. \]

Let $ \log(\sqrt{5})/\log(\alpha) = [a_0 ; a_1 , a_2 , \ldots ]$ be its continued fraction representation.  
The Legendre criterion (Lemma \ref{lemma-legendre}) yields, 
\[\dfrac{p_t}{q_t} : = \dfrac{(n+d)s-m}{s-1}\]
is a convergent of the continued fraction of the irrational number $\log(\sqrt{5})/\log(\alpha)$. Let  $g :=  \gcd\big( (n+d)s-m, s-1 \big)$. Then 
\[ q_t=\dfrac{s-1}{g}<s < 6.02\cdot10^{32}.\]
We verify with computer assistance that $6.02 \cdot10^{32}< q_{70}$. Thus $t\in\{1,2,...,69\}$. Moreover, since
$\max\{a_1,a_2,...,a_{69}\}=29$, $a_t \leq 29.$
Therefore, 
\[  \left| \frac{\log(\sqrt{5})}{\log(\alpha)} - \dfrac{p_t}{q_t} \right| > \dfrac{1}{(a_t + 2)q_t^2}  .\]
Replacing $q_t$ by $(s-1)/g$ yields
\[ \left| \frac{\log(\sqrt{5})}{\log(\alpha)} - \dfrac{p_t}{q_t} \right| >  \dfrac{d^2}{31(s-1)^2}, \]
a contradiction. Therefore, $s \leq 173$. \\

\paragraph{Case 3.} Let $n<2\log_{\alpha}(s)$ and $d \le 3$. We first observe that 
\[ s< 2.22 \cdot 10^{12} \; (n+d)\log(n+d)
< 2.22 \cdot 10^{12} \;\big(2\log_{\alpha}(s)+3 \big) \big(\log(2\log_{\alpha}(s)+3)\big).\]
Hence, $ s < 1.91 \cdot 10^{17}$ and, therefore, 
\[  n+d<2\log_{\alpha}(s)+3 < 168. \]

In what follows, we reduce the above upper bound for $s$. Let $\Gamma=m \log(\alpha)-\log(\sqrt{5})-s\log(F_{n+d})$, as before,  \eqref{eq-1-thm3.11} it implies that  $\Gamma > 0$. Moreover, 
\[ \Gamma<e^{\Gamma}-1=\alpha^m \sqrt{5}^{-1} F_{n+d}^{-s}-1
< 2(1.5)^{-s}
\]
Thus 
\[ m \log(\alpha)-\log(\sqrt{5})-s\log(F_{n+d})
<2 (1.5)^s 
< 2(1.5)^{\frac{m}{n+d-1}}<2(1,5)^{\frac{m}{168}}
\]
Dividing by $\log(F_{n+d})$ yields
\[ m \frac{\log(\alpha)}{\log(F_{n+d})}-\frac{\log(\sqrt{5})}{\log(F_{n+d})} -s 
< 2 (1.5^{\frac{1}{168}})^m. \]

Next we apply Lemma \ref{lemma-dujella} for 
$A=2$, $B=1.5^{\frac{1}{165}}$, $\gamma_{n+d}=\frac{\log(\alpha)}{\log(F_{n+d})}$ and $\mu_{n+d}=-\log(\sqrt{5})/\log(F_{n+d})$.  
Let $q_t(\gamma_{n+d})$ be the denominator of the $t$-th convergente of $\gamma_{n+d}$. We verify with computer assistance that $q_{99}(\gamma_{n+d})>6.1 \cdot 10^{17}$ and 
\[\lVert \mu_{n+d} \; q_{99}(\gamma_{n+d}) \rVert - 1.91 \cdot 10^{17}  \lVert \gamma \cdot q_{99}(\gamma_{n+d}) \rVert >0\]
for all $0 < n+d \le 168$. Thus
\[s<\frac{\log(Aq / \varepsilon)}{\log(B)}<81766.\]

Let $M:= 1.91 \cdot 10^{17}$. For each $m$, let $q \in \Z$ be the denominator of the $99$-th convergent of  $\gamma$. We note that $q$ depends on $n+d$. The minimal value of $q$ for $n+d \in [3,168]$ exceed $10^{44} > 6M$. Hence, we might apply Lemma \ref{lemma-dujella} for each $q$, $\gamma$, and $\mu$. The maximum value of $M\lVert q \gamma\rVert$ is smaller than $10^{-27}$. While the minimum value of $\lVert q \mu \rVert$ is bigger than $2.4 \times 10^{-25}$. Thus, we might assume
\[ \varepsilon := \lVert q \mu \rVert - M \lVert q \gamma \lVert 
\; > 10^{-25}\]
in Lemma \ref{lemma-dujella}.  Moreover, 
the maximum value of  $q$ is smaller than $10^{61}$. Therefore, follows from above discussion and Lemma \ref{lemma-dujella} that all the solutions for the Diophantine equation satisfy
\[
 s < \frac{\log\left(Aq / \varepsilon\right)}{\log(B)} < \frac{\log\left(2 \times 10^{61}  \times 10^{25} \right)}{\log(1.5^{\frac{1}{165}})} < 81766.
 \]

\paragraph{Case 4.} 
Let $n>2 \log_{\alpha}(s)$ and $d \le 3$. Lemma \ref{lema3.4} yields 
\begin{align*}
 \big|\sqrt{5}^{s-1} \alpha^m-\alpha^{ns+ds}-\alpha^{ns}\big|
 & =\big|\sqrt{5}^{s-1} (\alpha^m-\sqrt{5}F_m)-(\alpha^{ns+ds}-\sqrt{5}^s F_{n+d}^s)-(\alpha^{ns}-\sqrt{5}^s F_n^s)\big|   \\
 & \le \big|\sqrt{5}^{s-1}(\alpha^m \sqrt{5}F_m)\big|+\big|\alpha^{ns+ds}-\sqrt{5}^{s} F_{n+d}^s\big|+\big|\alpha^{ns}-\sqrt{5}^s F_n^s\big| \\
& <\sqrt{5}^{s-1} \big|\beta\big|^m+2s \alpha^{ns+ds-2n-2d}+2s \alpha^{ns-2n}. 
\end{align*}
Dividing above inequality by $\alpha^{n+d}$ yields
\begin{align*}
    \big|\alpha^{-sd}+1-\sqrt{5}^{s-1}\big|& <2s\alpha^{-2(n+d)}+2s\alpha^{-ds-2n}+\sqrt{5}^{s-1} |\beta|^{m+ns+ds} \\
 & <4s \alpha^{-2(n+d)}+\sqrt{5}^{s-1}|\beta|^{m+ns+ds}\\ 
 & <5s \alpha^{-2(n+d)}
 \end{align*}
where the last inequality is obtained by applying the estimate 
\[ \sqrt{5}^{s-1}\big|\beta\big|^m<\frac{\alpha^{4s-3}}{\alpha^{(n+d)s}}\] 
arising from Lemma \ref{lemma-alphabetalow}. Thus 
\[ \big|\alpha^{-sd}+1-\sqrt{5}^{s-1} \alpha^{m-ns-ds}\big|<5s\alpha^{-2(n+d)}.
\]

Triangle inequality and adding $\alpha^{sd}$ yields 
\begin{equation} \label{eq-5.16}
    \big|1-\sqrt{5}^{s-1} \alpha^{m-ns-ds}\big|<5s \alpha^{-2(n+d)}+\alpha^{-sd}<6s \alpha^{-\frac{n+d}{3}}. 
\end{equation}

Follows from Lemma \ref{lemma-matveev} that
\[ (2se)^{-\lambda '} 
< (e \cdot \max\{s-1,|m-ns-ds\big|\})^{-\lambda '} 
< |1-\sqrt{5}^{s-1}\alpha^{m-ns-ds}\big|,\]
where $\lambda '=2.8\cdot 10^9$. Hence
\[ \alpha^{\frac{n+d}{3}}<6s(2se)^{\lambda '}<s^{2\lambda '},\]
and thus
\[ n+d <2\lambda ' \log_{\alpha}(s)
< 5.6 \cdot 10^9 \log\big(2.22 \cdot 10^{12} (n+d)\log(n+d) \big).\]
Therefore, $n+d<6,99.10^{17}$ and
\[ s<2.22 \cdot 10^{12} (n+d) \log(n+d)< 6.02 \cdot 10^{32}.\]

We next apply the above discussion to reduce the upper bound for $s$. Since $s<\alpha^n$, in the inequality \eqref{eq-5.16} we obtain 
\[ \big|1- \alpha^{m-ns-ds}\sqrt{5}^{s-1} \big|
<  \dfrac{1}{\alpha^{sd}} + \dfrac{5s}{\alpha^{2(n+d)}} 
< \dfrac{1}{\alpha^{sd}}+\dfrac{5}{\alpha^{n+2d}}
\] 
Moreover, since $\alpha^n>s^2$
\[ \big|1- \alpha^{m-ns-ds}\sqrt{5}^{s-1} \big|  
    < \dfrac{1}{\alpha^{ds}}+\dfrac{5}{\alpha^{2d}s^2}. \]
Thus $ | \Lambda | < \alpha^{-5} + 0.0764 < 0.17$, where 
$\Lambda := 1- \alpha^{m-ns-ds} \sqrt{5}^{s-1}$. 

Let $\Gamma:=(s-1)\log(\sqrt{5})- ( n+d - m ) \log(\alpha).$ Since $e^{\Gamma}-1=\Lambda\neq0$, then $\Gamma\neq0$ and 
 $|e^{\Gamma}-1|< 0.17$. Hence  
$|\Gamma|< 0.19$ and $e^{|\Gamma|}< 1.21$.  
Moreover, 
\[  | \Gamma | < e^{|\Gamma|} \; |e^{\Gamma}-1|< 1.21 \cdot |\Lambda|
\quad \text{ i.e., } \quad
 |\Gamma |< \dfrac{1.21}{\alpha^{sd}}+\dfrac{2.31}{s^2}.\] 

If $s\geq73$, then 
\begin{equation} \label{eq-5.17}
    | \Gamma | < \dfrac{1}{32(s-1)^2}
\end{equation}
Dividing by $\log(\alpha)(s-1)$ yields 
\[ \left| \dfrac{\log(\sqrt{5})}{\log(\alpha)} - \dfrac{(n+d)s-m}{s-1} \right| < \dfrac{1}{32(s-1)^2}.\] 

Let $ \log(\sqrt{5})/\log(\alpha) = [a_0 ; a_1 , a_2 , \ldots ]$ be its continued fraction representation.  
By Legendre's criterion (Lemma \ref{lemma-legendre}), 
\[ \frac{p_t}{q_t}:= \frac{((n+d)s-m)}{(s-1) }  \] is a convergent of the continued fraction of the irrational $\log(\sqrt{5})/\log(\alpha)$. 

Let $g:= \gcd \big( (n+d)s-m, s-1 \big)$. Then 
\[ q_t=\dfrac{s-1}{g}<s<6.02\cdot10^{32}.\] 
We verify with computer assistance that 
$6.02 \cdot 10^{32}< q_{70}$. Thus $t\in\{1,2,...,69\}$. 
Moreover, $\max\{a_1,a_2,...,a_{69}\}=29$, then $a_t\leq29$ for all $t = 1, \ldots, 69$. Hence, 
\[   \left| \frac{\log(\sqrt{5})}{\log(\alpha)} - \dfrac{p_t}{q_t} \right|
> \dfrac{1}{q_t^2 (a_t + 2)} .\] 
Replacing $q_t$ by $(s-1)/g$ yields
\[ \left| \frac{\log(\sqrt{5})}{\log(\alpha)} - \dfrac{p_t}{q_t} \right| > \dfrac{g^2}{31(s-1)^2}, \]
a contradiction with  inequality \eqref{eq-5.17}. Therefore, $s\leq72$. \\

It follows from the above cases that all variables of the Diophantine equation were bounded. Therefore, we conclude the proof through a computational verification of the remaining finite cases. \end{proof}

In the Appendix, we explain the computational method used in the proof of the previous theorem.


\section{Sums of $s$-powers of Fibonacci numbers} 

Theorem \ref{thm-main2} ensures that, except for the trivial cases, the sum of powers of two arbitrary Fibonacci numbers does not return to the sequence. In this section, we investigate whether we could sum an arbitrary number of powers of consecutive Fibonacci numbers and obtain a new Fibonacci number.

\begin{thm} \label{thm-main3}
    If $d+1<n$, then the Diophantine equation
\[ F_n^s+ F_{n+1}^s + \cdots +F_{n+d}^s = F_m
\]
has no solution for $n,s \geq 3$ and $d \geq 2$. 
\end{thm}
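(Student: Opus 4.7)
The plan is to adapt the Matveev/Dujella--Peth\H{o} strategy used in the proof of Theorem~\ref{thm-main2}, exploiting the hypothesis $d+1<n$ to treat $F_{n+d}^s$ as the dominant term of the sum.

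First, I would obtain coarse bounds on $m$. Combining $F_{n+d}^{s}\le F_m\le(d+1)F_{n+d}^{s}$ with $\alpha^{k-2}\le F_k\le\alpha^{k-1}$ yields
\[ s(n+d-2)+1<m\le s(n+d-1)+2+\log_{\alpha}(d+1). \]
Next, using Binet's formula $\sqrt{5}\,F_m=\alpha^m-\beta^m$ together with Lemma~\ref{lema3.4} applied to each summand $F_{n+i}^{s}$, the equation reduces to an approximate identity whose dominant right-hand contribution is $\alpha^{sn}\,\sqrt{5}^{\,1-s}\cdot(\alpha^{s(d+1)}-1)/(\alpha^{s}-1)$. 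The hypothesis $d+1<n$ is crucial here: it makes both the error coming from the subleading Binet terms (of order $\alpha^{(s-2)(n+i)}$ for each $i$) and the tail $|\beta|^{m}\sqrt{5}^{\,s-1}$ genuinely small compared with the main term, and after bookkeeping one extracts a bound of the shape
\[ \left|\,\alpha^{m}\sqrt{5}^{\,s-1}(\alpha^{s}-1)-\alpha^{sn}(\alpha^{s(d+1)}-1)\,\right| \;\ll\; (d+1)\,2^{s}\,\alpha^{(s-2)n+sd}. \]

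Dividing by the dominant term and applying Matveev's lemma (Lemma~\ref{lemma-matveev}) to the three algebraic numbers $\alpha$, $\sqrt{5}$, and $\eta:=(\alpha^{s(d+1)}-1)/(\alpha^{s}-1)\in\Q(\sqrt{5})$ produces a lower bound of the form $(eM)^{-\lambda}$ with $M=\max(m,sn)$ and $\lambda$ linear in the logarithmic height of $\eta$, which is itself of order $s(d+1)$. Balancing this against the upper bound above, and using $d<n$ together with $m\le s(n+d)+O(1)$, yields an effective polylogarithmic upper bound on $n$, and hence on $s,d,m$. I would then apply the Dujella--Peth\H{o} lemma (Lemma~\ref{lemma-dujella}) to a linearized form $\Gamma:=m\log\alpha-(s-1)\log\sqrt{5}-\log((\alpha^{s(d+1)}-1)/(\alpha^{s}-1))$ using the continued fraction of a suitable irrational (for instance, $\log(\sqrt{5})/\log(\alpha)$ or $\log(\eta)/\log(\alpha)$ treated as a one-parameter family in $d$) to reduce the bound on $s$ to the low hundreds, and finish with a direct computer search over the remaining finite range of $(n,d,s)$, as in the proofs of Theorems~\ref{thm-main1} and \ref{thm-main2}.

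The main obstacle will be the presence of $\eta$ in the linear form: unlike in Theorem~\ref{thm-main2}, the third base depends on both $s$ and $d$, so its logarithmic height must be estimated carefully via its Galois conjugate $\eta':=((-1)^{s(d+1)}\alpha^{-s(d+1)}-1)/((-1)^{s}\alpha^{-s}-1)$ in $\Q(\sqrt{5})$, and the resulting $\lambda$ grows with $d$. Equally delicate is verifying that the linear form is nonzero before invoking Matveev; this should be handled by applying the Galois involution $\alpha\leftrightarrow\beta$ to any hypothetical vanishing identity and deriving a contradiction from signs and magnitudes, in the spirit of the norm/trace argument that precedes Theorem~\ref{thm-main2}. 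Once these analytic ingredients are in place, the reduction and the finite verification follow the established pattern of the preceding sections.
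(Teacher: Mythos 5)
There is a genuine gap at the heart of your plan: the single Matveev application with the third base $\eta:=(\alpha^{s(d+1)}-1)/(\alpha^{s}-1)=1+\alpha^{s}+\cdots+\alpha^{sd}$ cannot bound any of the variables. Since $\eta$ is an algebraic integer of size $\asymp\alpha^{sd}$, its logarithmic height is at least of order $sd\log\alpha$, so the exponent $\lambda$ in Matveev's lower bound is $\gg 10^{9}\,sd$. Your upper bound for the normalized linear form is of the shape $(d+1)2^{s}\alpha^{-2n}$, so balancing gives only
\[
2n\log\alpha \;<\; s\log 2+\log(d+1)+C\,sd\,\log(eM),\qquad C\approx 10^{9},
\]
i.e.\ $n\ll sd\log(sm)$. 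Because the hypothesis only gives $d<n-1$ (so $d$ may be comparable to $n$), and because the right-hand side \emph{increases} with $s$, this inequality is vacuous: it bounds neither $n$ nor $s$ nor $d$. The claimed ``polylogarithmic upper bound on $n$, and hence on $s,d,m$'' does not follow. You flag the growth of $\lambda$ with $d$ as a delicate point, but it is not a technicality to be estimated more carefully --- it defeats the strategy, because you have placed $s$ (and $d$) inside the \emph{height of a base}, which is the expensive side of Matveev, while losing any decay in $s$ on the other side.

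The paper's proof avoids exactly this trap by a two-stage argument. First it divides by the single dominant summand $F_{n+d}^{s}$ and applies Matveev to $\alpha^{m}\sqrt{5}^{-1}F_{n+d}^{-s}-1$ with bases $\alpha,\sqrt{5},F_{n+d}$: here $s$ appears only as an exponent (so it contributes only through $\log B\le\log m$), the height of $F_{n+d}$ is $\asymp(n+d)\log\alpha$ with no factor of $s$, and the remainder $\sum_{i<d}(F_{n+i}/F_{n+d})^{s}<0.9^{s}$ decays exponentially in $s$; this yields $s\ll(n+d)\log(n+d)$. Second, it passes to the \emph{two}-base form $\Lambda=\alpha^{m-(n+d)s}\sqrt{5}^{s-1}-1$, absorbing the entire geometric tail $\alpha^{-s}+\cdots+\alpha^{-ds}$ into the error term (bounded by $1/(\alpha^{s}-1)$) rather than into a base, so that $|\Lambda|\ll\alpha^{-\min(n,s-1)}$ and Matveev with $\ell=2$ bounds $\min(n,s-1)$ absolutely; combining with the first stage and $n+d<2n$ gives an absolute bound on $s$. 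If you want to salvage your write-up, you should restore this two-stage structure (or an equivalent one in which $s$ never enters the height of a Matveev base); the continued-fraction reduction and the final computation can then proceed as you describe.
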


\begin{proof} Our proof consists of providing an upper bound for $s$ in terms of $m$, $n$, and $d$. We then use Matveev's Lemma \ref{lemma-matveev} to obtain an upper bound for $s$ that depends only on $n$ and $d$. Next, we use the hypothesis that $d+1 < n$ to obtain a numerical bound for $s$. After refining this bound, we show that for sufficiently large $n$, the Diophantine equation has no solution and verify computationally the remaining finite cases. \\


\paragraph{\textit{An upper bound for $s$ in terms of $n$, $m$ and $d$.}} 
Follows from Lemma \ref{kfibbasic} that
\[ \alpha^{m-2}  <  F_m  =  F_n^s + F_{n+1}^s + \cdots + F_{n+d}^s  
    <  \alpha^{(n-1)s}  +  \alpha^{ns}  + ... +  \alpha^{(n+d-1)s}  
    =  \alpha^{(n-1)s} \dfrac{\alpha^{(d+1)s}-1}{\alpha^s-1}.
 \]
Hence $\alpha^{m-2} < \alpha^{s(n+d-1)+1}$, which is equivalent to 
$ m-2  <  (n+d-1)s+1$. We thus conclude that 
\[ m < (n+d-1)s + 3  <  (n+d)s. \]

On the other side, Lemma \ref{kfibbasic}  yields 
\[ \alpha^{(n-2)s}   +   \alpha^{(n-3)s}   +...+   \alpha^{(n+d-2)s} < F_n^s + F_{n+1}^s + \cdots +  F_{n+d}^s  =  F_m < \alpha^{m-1}. 
\] 
Thereby, 
\[ \alpha^{(n-2)s} \; \alpha^{s(d+1)} \; \alpha^{-s+1} 
< \alpha^{m-1}\] 
and, thus $(n+d-2)s+1  <  m - 1$. Hence,  $(n+d-2)s+2  <  m$,
which implies that 
\[ (n+d-3)s <  m.\] 
Follows from above discussion that 
 \begin{equation}
     \dfrac{m}{n+d} < s < \dfrac{m}{n+d-3}
 \end{equation} 
i.e., a lower and upper bound for $s$ in terms of $m,n$ and $d$. \\


\paragraph{\textit{An upper bound for $s$ in terms of $n$ and $d$.}} 
We first model our Diophantine equation in order to apply the Matveev Lemma \ref{lemma-matveev}. The Binet formula yields
\[ 
\dfrac{\alpha^m}{\sqrt{5}} - F_{n+d}^s  =   F_n^s + F_{n+1}^s + \cdots + F_{n+d-1}^s +\dfrac{\beta^m}{\sqrt{5}}.
\]
Dividing by booth sides of above identity by $F_{n+d}^s$ yields
\[ 
\alpha^m \; \sqrt{5}^{-1} \; F_{n+d}^{-s} - 1  =  
\left( \dfrac{F_n}{F_{n+d}} \right)^s  
+  \left(\dfrac{F_{n+1}}{F_{n+d}} \right)^s  + \cdots 
+\left(\dfrac{F_{n+d-1}}{F_{n+d}} \right)^s  
+ \left(\dfrac{\beta^m }{ \sqrt{5}\; F_{n+d}} \right)^s. 
\] 
Follows from Lemma \ref{kfibbasic} that
\[ \alpha^m \; \sqrt{5}^{-1} \; F_{n+d}^{-s} - 1     
<   \left(\frac{2}{3}\right)^s  \; \dfrac{  (2/3)^{s(d+1)} -1  }{(2/3)^s-1} <  0.9^s .
 \]
From Lemma \ref{kfibbasic}, $\alpha^n= \alpha F_n + F_{n-1}$. Thus, if the left side of the above inequality vanishes, 
$ (\alpha F_m  + F_{m-1} ) \sqrt{5}^{-1}    =  F_{n+d}^s.$
Hence, it would imply 
\[  \dfrac{ F_m + 2F_{m-1} }{ \sqrt{5} } = 2F_{n+d}^s - F_m. \]
and, therefore, 
\[ \dfrac{ F_m + 2F_{m-1} }{ F_{n+d}^s - F_m } = \sqrt{5},
\]
a contradiction.  We thus might apply the Matveev Lemma \ref{lemma-matveev} for $\alpha^m \; \sqrt{5}^{-1} \; F_{n+d}^{-s} - 1$.  The constants are: $\ell =2$, $\gamma_1=\alpha$, $\gamma_2 = \sqrt{5}$, $\gamma_3 = F_{n+d}$, $B=m$, $A_1=0.5$, $A_2=1.61$ and $A_3 = 2(n+d-1)\log(\alpha) > 2\log(F_{n+d}) = h( F_{n+d} )$. Hence
\[ \exp(-C_{3,2}(1+ \log(B))A_1 A_2 A_3) 
< \alpha^m \; \sqrt{5} \; F_{n+d}^{-s} - 1 
< 0.9^s.
\]
Since $C_{3,2}<10^{12}$, 
\[ 0.9^{-s} < \exp(1.61 \cdot 10^{12}(1+\log(m))(n+d-1)\log(\alpha) ).
\]
We thus conclude that 
\begin{equation} \label{eq-s1}
    s < 4.72\cdot10^{13}(n+d+1)\log(n+d+1).
\end{equation} \\


\paragraph{\textit{An absolute bound for $s$.}} 
From now on, we assume $n\geq151$. By hypothesis, $d+1<n$, thus inequality \eqref{eq-s1} yields 
\begin{equation} \label{eq-5.11}
    s< 9.44\cdot 10^{13} \; n \log(2 n).
\end{equation}
Let $x_i :=  s /\alpha^{2(n+i)}$ for $i = 0, 1, \ldots, d$. The following inequality 
\begin{equation}  \label{eq-5.12}
    x_i < \dfrac{ 1 }{ \alpha^{n+i}}
\end{equation}
holds for $n \geq 80$ and every $i = 0, 1, \ldots, d$. 
Since $n\geq151$,  
\[ 1   <   \left(  1 + \dfrac{ 1 }{ \alpha^{2(n+i)}} \right)^s <  e^{x_i}   <  1 + 2x_i\]
and
\[  1 - 2x_i   <  e^{-2x_i}   <   \left( 1 - \dfrac{ 1 }{ \alpha^{2(n+i)} } \right)^s < 1
\]
also hold. Therefore,  
\begin{equation} \label{eq-5.13} 
    1 - 2x_i  <  \left( 1 - \frac{  (-1)^{n+i}  }{ \alpha^{2(n+i)} } \right)^s < 1 + 2x_i .
\end{equation}

By the Binet formula we obtain that
\[ F_{n+i}^s = \frac{ \alpha^{(n+i)s} }{ \sqrt{5} }  
\left( 1 - \frac{  (-1)^{n+i} }{ \alpha^{2(n+i)} } \right)^s.
\]
Hence, 
\[  F_{n+i} - \dfrac{ \alpha^{(n+i)s} }{  \sqrt{5}^s  }   
=  \dfrac{ \alpha^{(n+i)s} }{ \sqrt{5} }  
\left( \left( 1 - \frac{  (-1)^{n+i} }{ \alpha^{2(n+i)} }\right)^s - 1 \right). 
\]
The inequality \eqref{eq-5.13} yields
\begin{equation} \label{eq-5.6} 
    \left| F_{n+i}^s - \dfrac{ \alpha^{(n+i)s} }{ \sqrt{5}^s } \right| 
    < 2x_i \; \frac{ \alpha^{(n+i)s} }{ \sqrt{5}^s }.
\end{equation}

Next we return our attention to the Diophantine equation and write
\[ \dfrac{\alpha^m}{\sqrt{5}}  -  \dfrac{  \alpha^{ns}  }{ \sqrt{5}^s } 
- \dfrac{  \alpha^{(n+1)s}  }{ \sqrt{5}^s } 
- \cdots - \dfrac{ \alpha^{(n+d)s} }{ \sqrt{5}^s } 
=   \dfrac{\beta^m}{\sqrt{5}}  
+   F_n^s - \dfrac{  \alpha^{ns}  }{ \sqrt{5}^s }   
+   F_{n+1}^s - \dfrac{  \alpha^{(n+1)s  }}{ \sqrt{5}^s }    
+ \cdots +  F_{n+d}^s - \dfrac{  \alpha^{(n+d)s}  }{ \sqrt{5}^s }.
\]
Taking the modulus on both sides and applying the triangle inequality,
\[ 
 \left| \dfrac{  \alpha^m  }{  \sqrt{5}  }  -  \dfrac{  \alpha^{ns}  }{ \sqrt{5}^s } -  \cdots - \dfrac{ \alpha^{(n+d)s} }{ \sqrt{5}^s } \right| \leq  \dfrac{\beta^m}{\sqrt{5}}  +   \left| F_n^s - \dfrac{  \alpha^{ns}  }{ \sqrt{5}^s } \right|  +   \cdots +  \left| F_{n+d}^s - \dfrac{  \alpha^{(n+d)s}  }{ \sqrt{5}^s } \right|  . 
\]
Now, inequality \eqref{eq-5.6} ensures that,
\[
    \left| \dfrac{  \alpha^m  }{  \sqrt{5}  }  -  \dfrac{  \alpha^{ns}  }{ \sqrt{5}^s } -  \cdots - \dfrac{ \alpha^{(n+d)s} }{ \sqrt{5}^s } \right|  <  \dfrac{  2s\alpha^{ns-2n}  }{  \sqrt{5}^s  }  \left( 1 + \alpha^{s-2}  + \alpha^{2(s-2)}  + \cdots +  \alpha^{d(s-2)}  \right)  +  \dfrac{1}{\alpha^m} . 
 \]
Dividing both sides by $\alpha^{(m+d)s}/ \sqrt{5}^s$ and applying inequality \eqref{eq-5.12}, we obtain 
\begin{align*}
  \Big| \frac{\alpha^{ m}}{\alpha^{(m+d)s }} \sqrt{5}^{s-1} - \alpha^{-ds} - \alpha^{s(1-d)}   - \cdots  - 1 \Big| 
  & <  \dfrac{2x_0}{ \alpha^{ds}}  \left( 1 + \alpha^{s-2} +  \cdots
 +  \alpha^{d(s-2)} \right) + \dfrac{ \sqrt{5}^s }{ \alpha^{m+(m+d)s}} \\
 & <  \dfrac{2}{\alpha^{n+sd}} \left( \dfrac{  \alpha^{(s-2)(d+1)}  - 1 }{  \alpha^{s-2} - 1  } \right) +  \dfrac{ \sqrt{5}^s }{ \alpha^{m+(n+d)s}} .
\end{align*}
Let $\Lambda := \alpha^{ m - (n+d)s } \sqrt{5}^{s-1} - 1$. Follows from previous inequality that
\[    \left| \Lambda \right| 
<  \dfrac{2}{\alpha^{n+sd}}  \left( \dfrac{  \alpha^{(s-2)(d+1)}  - 1 }{  \alpha^{s-2} - 1  } \right)  + \dfrac{ \sqrt{5}^s }{ \alpha^{s(2n+2d-3)}}  
+  \alpha^{-s}  +  \alpha^{-2s}  + \cdots +   \alpha^{-ds}.
\]
Upper bounding the geometric progression yields
\begin{align*}
| \Lambda |  
& <  2\alpha^{ -n+ds} 
  \left( \dfrac{  \alpha^{(s-2)(d+1)}  - 1 }{  \alpha^{s-2} - 1  } \right)  + \dfrac{ \sqrt{5}^s }{ \alpha^{(2n+2d-3)s}}  +  \dfrac{1}{\alpha^s -1} \\
&  <    \dfrac{ 2\alpha^{s-2(d+1)} }{\alpha^{n} (\alpha^{s-2} -1)}  + \dfrac{ \sqrt{5}^s }{ \alpha^{(2n+2d -3)s} }  +  \dfrac{ 1 }{ \alpha^s -1 } \\ 
& < \dfrac{  2\alpha^s  }{ \alpha^n( \alpha^{s+4}  -  \alpha^{6}) }  +  \dfrac{ 1 }{  \alpha^{s-1}  }  +  \dfrac{ 1 }{ 2\alpha^{ s( n + 2d ) - 3s} }.
\end{align*}
Since $ (\sqrt{5}/\alpha^n)^s  <  1/2$ 
and $\alpha^s/ (\alpha^{s+4}  -  \alpha^6) < 0.42 $, 
\begin{equation} \label{Lambda1}
    \left| \Lambda \right|  
    <  \dfrac{ 0.84 }{ \alpha^n }  +  \dfrac{ 1 }{ \alpha^{s-1}} 
    +  \dfrac{ 1 }{ 2\alpha^{s(n+2d) - 3s}}.
\end{equation}
Moreover, if $b := \min\{ n , s - 1 \}$, then 
\begin{equation} \label{eq-5.15} 
    | \Lambda | < 2.34/\alpha^b.
\end{equation}
We observe that $\Lambda \neq 0$, since otherwise 
$\alpha^{2(n+d)s-2m} \in \Z$, which would imply that $s=1$, a contradiction.

We thus might apply the Matveed Lemma \ref{lemma-matveev} once more. The constantes now are:  $\ell =2$, $\gamma_1 =\alpha$, $\gamma_2 =\sqrt{5}$, $A_1 = 0.5$, $A_2 =1.61$ and $B = s-1$. Settling the Matveed Lemma \ref{lemma-matveev} with inequality \eqref{eq-5.15} yields
\[ \exp( -0.805 \cdot C_{2,2}  (1 + \log(s-1) )  ) < 2.34/\alpha^b,  
\]       
which implies that        
\[ b <  \dfrac{ \log(2.34) }{ \log(\alpha) } 
+ \dfrac{  4.27 \cdot 10^9  \; (1 + \log(s-1) )  }{ \log(\alpha) }.
\]
Thereby, $ b < 1.77  + 1.34\cdot10^{10} \;\log(s-1) < 1.35\cdot 10^{10} \; \log(s-1).
$

If $b=s-1$, $  (s-1)/ \log( s-1 ) < 1.35\cdot 10^{10}$
and, thus, $s< 3.6\cdot 10^{11}$. If $b=n$, inequality \eqref{eq-5.11} yields
\[ n < 1.35 \cdot 10^{10} \; \log \big( 9.44\cdot10^{13} \; n\log(2n) \big),
 \]
and, thus,  $ n < 8.51 \cdot 10^{11}$.

We thus apply \eqref{eq-5.11} to conclude that $ s<2.21\cdot10^{27}. $ In the following we improve this upper bound for $s$. \\


\paragraph{\textit{Refining the absolute bound for $s$.}}

Since \(m \geq 151\), \(s \geq 3\), and \(d \geq 2\), inequality \eqref{Lambda1} implies that 
\(\left| \Lambda \right| < 0.4\).

Let \(\Gamma := (s-1)\log(\sqrt{5}) - ((n+d)s - m)\log(\alpha)\). Since \(\Lambda = e^\Gamma - 1 \neq 0\), it follows that \(\Gamma \neq 0\). 
Since, $|e^{\Lambda} - 1| < 0.4$, we obtain that
$ |\Gamma | < 0.52$. Thus $e^{|\Gamma|} < 1.69.$
Thereby,
\[ |\Gamma| < e^{|\Gamma|}\big|e^\Gamma - 1\big| < 1.69 \cdot |\Lambda|. \] 
We thus establish the following upper bound for \(|\Gamma|\) in terms of \(\alpha\), \(s\), \(n\), and \(d\) 
\[ |\Gamma| < \frac{1.42}{\alpha^n} + \frac{1.69}{\alpha^{s-1}} + \frac{0.85}{\alpha^{(n+2d)s - 3s}}.
\]  
Dividing through by \((s-1)\log(\alpha)\) yields   
\[
\left| \frac{\log(\sqrt{5})}{\log(\alpha)} - \frac{(n+d)s - m}{s-1} \right| < \frac{1}{(s-1)\log(\alpha)}\left(\frac{1.42}{\alpha^n} + \frac{0.85}{\alpha^{s(n+2d) - 3s}} + \frac{1.69}{\alpha^{s-1}}\right).
\]  

If \(s \geq 19\), then  $ 154(s-1) < \alpha^{s-1}$ and  
\[ 
 154(s-1) < \alpha^{151} \leq \alpha^n,
\]  
where the latter follows from \(s < 2.21 \cdot 10^{27}\). Combining the previous inequalities, we obtain the subsequent approximation for $\log(\sqrt{5})/\log(\alpha)$ depending on $m,d,s,$ and $m$,
\begin{equation} \label{contfrac1}
     \left| \dfrac{ \log(\sqrt{5}) }{ \log(\alpha) } - \dfrac{  (n+d)s-m  }{ s-1 }  \right| < \dfrac{1}{(s-1)\log(\alpha)} \cdot \dfrac{4.8}{ 154(s-1)}  <  \dfrac{1}{32(s-1)^2}.
\end{equation}
Let $ \log(\sqrt{5})/\log(\alpha) = [a_0 ; a_1 , a_2 , \ldots ]$ be its continued fraction representation. The Legendre criterion (Lemma \ref{lemma-legendre}) and the previous inequality yield
\[ \frac{p_t}{q_t} := \frac{(n+d)s-m}{s-1}\] 
is a convergent of the continued fraction of $\log(\sqrt{5})/\log(\alpha)$. Let $g := \gcd( (n+d)s - m , s - 1)$. Then $q_t = (s - 1)/g $ and
$ \ q_t <s$. 
A quick computational check, gives that $q_{54} > 2.21\cdot 10^{27}$, and thus $t \in \{0,1,...,53\}$. Moreover, $a_k\leq29$ for all $k\in\{0,1,...,53\}$. Follows from a well-know property satisfied by continued fractions and their convergents that
\[ \left| \frac{\log(\sqrt{5})}{\log(\alpha)}- \dfrac{p_t}{q_t} \right| > \dfrac{1}{(a_{t+1} + 2) q_t^2}  >  \dfrac{g^2}{31(s-1)^2}  >  \dfrac{ 1 }{ 31(s-1)^2 },
\]
which contradicts \eqref{contfrac1}. Therefore, $s<19$. \\


\paragraph{\textit{The final steps.}}

First, we consider $9 \leq s <19$. Inequality \eqref{Lambda1} yields
\[
    \left| \alpha^{ m - (n+d)s } 5^{ \frac{s-1}{2} } -   1 \right|  <  \dfrac{ 0.84 }{ \alpha^n }  +  \dfrac{ 1 }{ \alpha^{s-1}}  +  \dfrac{ 1 }{ 2\alpha^{s(n+2d) - 3s}}  <  \dfrac{ 1.84 }{ \alpha^n }  +  \dfrac{ 1 }{ \alpha^{s-1}}.
\]
Let $r:=m-(n+d)s$, and
\[
    \Theta(r,s) :=  \left| \alpha^{ m - (n+d)s } 5^{ \frac{s-1}{2} } -   1 \right| - \dfrac{1}{\alpha^{s-1}}. 
\]
The previous inequality implies that $\Theta(r,s) < 1.84/\alpha^n$. Next, since 
\[0.6-1.67s < r < 1.75 - 1.68s,\] 
for $9 \leq s < 19$, a computational search gives the following lower and upper bound
\[
     \dfrac{2.7}{10^3}  < \Theta(r,s) < \dfrac{1.8}{\alpha^n}.
\]
Thereby $n<13$, a contradiction with the fact that $n \geq 151$. Thus, we must have $3 \leq s\leq8$. 

To address the remaining cases, we first assert that the following inequality holds
\[\left|F_x^s \sqrt{5}^{s}-\alpha^{xs}\right|<2^s\;\alpha^{x(s-2)},\]
 for every $x \in \mathbb{N}$.
Thus, 
\begin{align*}
    \left|\sum_{k=n}^{n+d}\alpha^{ks}-\alpha^{m}  \; 5^{(s-1)/2}\right|
    & =  \left|\sum_{k=n}^{n+d}(\alpha^{ks}-F_k^s \sqrt{5}^{s})-(\alpha^m-F_m\sqrt{5}) 5^{(s-1)/2}\right| \\
    & < \left|\beta\right|^m \; 5^{(s-1)/2} + 2^s \sum_{k=n}^{n+d}\alpha^{ks-2k} \\
    & <  \left|\beta\right|^m \;   5^{(s-1)/2} + 2^s\; \alpha^{(n+d)(s-2)}\frac{\alpha^{s-2}}{\alpha^{s-2}-1} \\
    & <  \left|\beta\right|^m  \;  5^{(s-1)/2} + 4.2^s \cdot \alpha^{(n+d)(s-2)}.
\end{align*}
 In order to estimate the term $\left|\beta\right|^m \;  5^{(s-1)/2}$, we recall that $m > s(n+d-2)+1$. Since $\big|\beta\big|<1$, we obtain that
\begin{align*}
     \left|\beta\right|^m \;  5^{(s-1)/2} 
     & <  \left|\beta\right|^{s(n+d-2)+1} \;  5^{(s-1)/2} \\
     & =  (\left|\beta\right|^2\sqrt{5})^{s-1}\; \big|\beta\big|^{(n+d-4)s+3} \\ 
     &<  \frac{\alpha^{4s-3}}{\alpha^{s(n+d)}} \\
     & <  2^{s+2}\; \alpha^{(n+d)(s-2)}.
 \end{align*}
Hence,
\[\left|\sum_{k=n}^{m+d}\alpha^{ks}-\alpha^{m}\;  5^{(s-1)/2}\right| < 2^{s+3}\cdot\alpha^{(n+d)(s-2)}. \]
Dividing both sides of the last inequality by $\alpha^{n+d}$ yields
 \begin{equation} \label{desigfinal}
    \left|\sum_{k=0}^{d}\alpha^{-ks}-\alpha^{m-ns-ds} \; 5^{(s-1)/2}\right|<2^{s+3} \cdot \alpha^{-2(n+d)}. 
 \end{equation}
 On the other hand, we can rewrite the left-hand side of the above inequality as
 \begin{align*}
      \left|\sum_{k=0}^{d}\alpha^{-ks}-\alpha^{m-ns-ds} \; 5^{(s-1)/2}\right|
      & =\left|\frac{\alpha^s-\alpha^{-s(d+1)}}{\alpha^s-1}-\alpha^{m-ns-ds} \; 5^{(s-1)/2}\right| \\
      & > \left|\frac{\alpha^s}{\alpha^s-1}-\alpha^{m-ns-ds} \; 5^{(s-1)/2}\right|-\frac{1}{\alpha^s(\alpha^s-1)} . 
  \end{align*}
Thus, combining the last inequality with \eqref{desigfinal} yields
\[ \alpha^{-2(m+d)}>\frac{1}{2^{s+3}}\left(\left|\frac{\alpha^s}{\alpha^s-1}-\alpha^{m-ns-ds} \cdot 5^{(s-1)/2}\right|-\frac{1}{\alpha^s(\alpha^s-1)}\right).
\]
Once more, using that 
\[ -2s+2 \le m-ns-ds \le -s+2\]
we can finally establish that,
    \[\alpha^{-2(m+d)}>\min_{\substack{ 3 \le s \le 8 \\ -2s+2 \le t \le -s+2}}\left\{\frac{1}{2^{s+3}}\left(\left|\frac{\alpha^s}{\alpha^s-1}-\alpha^{m-ns-ds} \; 5^{(s-1)/2}\right|-\frac{1}{\alpha^s(\alpha^s-1)}\right)\right\}>\frac{0,018}{2^{12}}.
    \]
Therefore,  $m+d<26$. A computational verification revealed that there are no solutions in this case and, thus, the theorem is proved.
\end{proof}

In the Appendix, we explain the computational method used in the proof of the previous theorem.


\section*{Appendix}

In what follows, we briefly describe the computational method applied to solve the problems addressed in Theorems \ref{thm-main2} and \ref{thm-main3}. We explain in more detail the method used in Theorem \ref{thm-main2}. However, the script for Theorem \ref{thm-main3} works in a completely analogous manner. The scripts were mostly written in C language.  

Regarding computational verifications, the main challenge with Diophantine equations involving Fibonacci numbers is that the Fibonacci sequence grows exponentially. This makes the execution time for verifications, if not infeasible,  very large.  To perform the computational tests required for the aforementioned theorems, we developed a computational method whose main goal is to address the problem of the exponential growth of the Fibonacci sequence.

The spirit of our computational verifications is represented in the following example. Consider the prime number $p = 3010349 \in \Z$. The period of the Fibonacci sequence modulo the prime $p$ is $62$. Thus, among the more than $3$ million values a number can assume $\pmod{p}$ modulo $p$, at most $62$ are congruent to some Fibonacci number.

We first chose a set of prime numbers $\{p_1, \ldots, p_n\} \subseteq \Z$ that are large but have a small Pisano period.
After selecting these primes, we calculated the first $200$ terms of the Fibonacci sequence modulo these primes. Since the Fibonacci sequence modulo any number is bounded, this process is almost immediate.

To check whether a triple $(n, d, s)$ satisfies 
\[ F_n^s + F_{n+d}^s = F_m,\]
the Diophantine equation from Theorem \ref{thm-main2}, the script first tests whether the expression 
\[ F_n^s + F_{n+d}^s \equiv F_m \pmod{p_1} \] 
is verified for some Fibonacci number modulo $p_1$. If this is not the case, the triple is discarded. Otherwise, the script tests whether the expression 
\[ F_n^s + F_{n+d}^s \equiv F_m \pmod{p_2} \] 
is congruent to some Fibonacci number modulo $p_2$. If this is not the case, the triple is discarded, and so on successively. 

In this sense, the prime numbers act as filters for the possible solutions to the equation 
\[ F_n^s + F_{n+d}^s = F_m.\] 
When designing the script, we must select a sufficiently large number of primes such that the set of unfiltered triples is empty or very small.

In the script verification of Theorem \ref{thm-main2} and \ref{thm-main3}, we considered the following four prime numbers: $p_1 = 39161$ (with period $110$), $p_2 = 28657$ (with period $92$), $p_3 = 9349$ (with period $38$), and $p_4 = 9901$ (with period $66$). The script showed to be very efficient, taking only $35$ seconds to solve the equation when $n + d < 168$ and $3 \leq s \leq \frac{58057}{n + d - 1}$. For comparison, in \cite{luca-oyono-11}, the authors mention a script that took approximately one hour to solve the same problem for $n \leq 150$ and $d = 1$.


\section*{Acknowledgement}
This is the second article resulting from the research project \textit{Fibonacci Journey}. The project had its beginning through the research program \textit{Jornadas de Pesquisa em Matemática do ICMC 2024}, held at ICMC-USP. The authors are very grateful for the hospitality and support of ICMC-USP, as well as to the organizers of this remarkable project. This work was financed, in part, by the São Paulo Research Foundation (FAPESP), Brasil. Process Numbers 2013/07375-0 and 2022/09476-7.



\begin{thebibliography}{BMMST18}
	
	\bibitem[AA23]{altassan-alan-23}
	A.~Altassan and M.~Alan.
	\newblock Almost repdigit $k$-{Fibonacci} numbers with an application of
	k-generalized fibonacci sequences.
	\newblock {\em Mathematics}, 11(2), 2023.
	
	\bibitem[Bak75]{baker-75}
	A.~Baker.
	\newblock A sharpening of the bounds for linear forms in logarithms. {III}.
	\newblock {\em Acta Arith.}, 27:247--252, 1975.
	
	\bibitem[BL13a]{bravo-luca-13}
	J.~J. Bravo and F.~Luca.
	\newblock On a conjecture about repdigits in {{\(k\)}}-generalized {Fibonacci}
	sequences.
	\newblock {\em Publ. Math. Debr.}, 82(3-4):623--639, 2013.
	
	\bibitem[BL13b]{bravo-luca-JNT-13}
	J.~J. Bravo and F. Luca.
	\newblock Coincidences in generalized {Fibonacci} sequences.
	\newblock {\em J. Number Theory}, 133(6):2121--2137, 2013.
	
	\bibitem[BL13c]{bravo-luca-publ-13}
	J.~J. Bravo and F. Luca.
	\newblock On a conjecture about repdigits in {{\(k\)}}-generalized {Fibonacci}
	sequences.
	\newblock {\em Publ. Math. Debr.}, 82(3-4):623--639, 2013.
	
	\bibitem[BMMST18]{gugu-18}
	F.~E. Brochero, C.~G. Moreira, N.~Saldanha, and E.~Tengan.
	\newblock {\em Teoria dos n{\'u}meros. {Um} passeio com primos e outros
		n{\'u}meros familiares pelo mundo inteiro}.
	\newblock Projeto Euclides. Rio de Janeiro: Instituto de Matem{\'a}tica Pura e
	Aplicada (IMPA), 5th edition edition, 2018.
	
	\bibitem[BMS06]{bugeaud-mignotte-siksek-06}
	Y.  Bugeaud, M. Mignotte, and S. Siksek.
	\newblock Classical and modular approaches to exponential {Diophantine}
	equations. {I}: {Fibonacci} and {Lucas} perfect powers.
	\newblock {\em Ann. Math. (2)}, 163(3):969--1018, 2006.
	
	\bibitem[CM14]{chaves-marques-14}
	A.~P. Chaves and D.~Marques.
	\newblock A {Diophantine} equation related to the sum of squares of consecutive
	{{\(k\)}}-generalized {Fibonacci} numbers.
	\newblock {\em Fibonacci Q.}, 52(1):70--74, 2014.
	
	\bibitem[CM15]{chaves-marques-15}
	A.~P. Chaves and D.~Marques.
	\newblock A {Diophantine} equation related to the sum of powers of two
	consecutive generalized {Fibonacci} numbers.
	\newblock {\em J. Number Theory}, 156:1--14, 2015.
	
	\bibitem[DD14]{dresden-du-14}
	G.~P.~B. Dresden and Z.~Du.
	\newblock A simplified {Binet} formula for {{\(k\)}}-generalized {Fibonacci}
	numbers.
	\newblock {\em J. Integer Seq.}, 17(4):article 14.4.7, 9, 2014.
	
	\bibitem[DGL18]{ddamulira-gomez-luca-18}
	M.~Ddamulira, C.A. Gómez, and F.~Luca.
	\newblock On a problem of pillai with {{\(k\)}}-generalized {Fibonacci} numbers
	and powers of {{\(2\)}}.
	\newblock {\em Monatsh. Math.}, 187:635--664, 2018.
	
	\bibitem[DP98]{dujella-petho-98}
	A.~Dujella and A.~Peth{\H{o}}.
	\newblock A generalization of a theorem of {Baker} and {Davenport}.
	\newblock {\em Q. J. Math., Oxf. II. Ser.}, 49(195):291--306, 1998.
	
	\bibitem[GGL24]{gomez-gomez-luca-24}
	C.~A. G{\'o}mez, J.~C. G{\'o}mez, and F.~Luca.
	\newblock A {Diophantine} equation with powers of three consecutive
	{{\(k\)}}-{Fibonacci} numbers.
	\newblock {\em Result. Math.}, 79(4):21, 2024.
	\newblock Id/No 136.
	
	\bibitem[GRL14]{gomez-luca-14}
	C.~A.  G.~Ruiz and F. Luca.
	\newblock An exponential {Diophantine} equation related to the sum of powers of
	two consecutive {{\(k\)}}-generalized {Fibonacci} numbers.
	\newblock {\em Colloq. Math.}, 137(2):171--188, 2014.
	
	\bibitem[Inc]{Mathematica}
	Wolfram~Research{,} Inc.
	\newblock Mathematica, {V}ersion 14.1.
	\newblock Champaign, IL, 2024.
	
	\bibitem[KS04]{kessler-04}
	D.~Kessler and J.~Schiff.
	\newblock A combinatoric proof and generalization of {Ferguson}'s formula for
	{{\(k\)}}-generalized {Fibonacci} numbers.
	\newblock {\em Fibonacci Q.}, 42(3):266--273, 2004.
	
	\bibitem[LO11]{luca-oyono-11}
	F. Luca and R. Oyono.
	\newblock An exponential {Diophantine} equation related to powers of two
	consecutive {Fibonacci} numbers.
	\newblock {\em Proc. Japan Acad., Ser. A}, 87(4):45--50, 2011.
	
	\bibitem[Mat00]{matveev-00}
	E.~M. Matveev.
	\newblock An explicit lower bound for a homogeneous rational linear form in
	logarithms of algebraic numbers. {II}.
	\newblock {\em Izv. Math.}, 64(6):1217--1269, 2000.
	
	\bibitem[MJ60]{miles-60}
	E.~P. Miles~Jr.
	\newblock Generalized {Fibonacci} numbers and associated matrices.
	\newblock {\em Am. Math. Mon.}, 67:745--752, 1960.
	
	\bibitem[Vor02]{vorobiev-02}
	N.~N. Vorobiev.
	\newblock {\em Fibonacci numbers. {Transl}. from the {Russian} by {Mircea}
		{Martin}}.
	\newblock Basel: Birkh{\"a}user, 2002.
	
	\bibitem[Wil95]{wiles-95}
	A.~Wiles.
	\newblock Modular elliptic curves and {Fermat}'s {Last} {Theorem}.
	\newblock {\em Ann. Math. (2)}, 141(3):443--551, 1995.
	
	\bibitem[Wol98]{wolfram-98}
	D.~A. Wolfram.
	\newblock Solving generalized {Fibonacci} recurrences.
	\newblock {\em Fibonacci Q.}, 36(2):129--145, 1998.
	
\end{thebibliography}

\end{document}